\documentclass[letterpaper,oneside]{amsart}
\usepackage{xcolor}

\usepackage{geometry}

\usepackage[normalem]{ulem}
\usepackage{amsthm}

\usepackage{amsmath, amssymb,mathtools}
\usepackage{mathrsfs}
\usepackage{hyperref}
\usepackage{graphicx, psfrag, epstopdf}
\usepackage{url}
\usepackage{enumitem}
\usepackage{comment}

\usepackage[backend=biber,sorting=nty,sortcites=true,style=alphabetic,
doi=true,natbib=true,isbn=false,url=false,maxbibnames=99]{biblatex}
\renewbibmacro{in:}{}

\DeclareBibliographyCategory{badbreaks}

\AtEveryBibitem{
  \ifcategory{badbreaks}
    {\defcounter{biburllcpenalty}{9000}}
    {}}

\theoremstyle{definition}
\newtheorem{definition}{Definition}[section]
\newtheorem{theorem}[definition]{Theorem}
\newtheorem{lemma}[definition]{Lemma}
\newtheorem{corollary}[definition]{Corollary}
\newtheorem{proposition}[definition]{Proposition}

\newtheorem{remark}[definition]{Remark}
{Example}

\numberwithin{equation}{section}

\def\DS{\displaystyle}

\newcommand{\Diff}{\operatorname{Diff}}

\newcommand{\im}{\operatorname{Im}}

\newcommand{\vol}{\operatorname{vol}}

\newcommand{\mc}[1]{\mathcal{#1}}

\newcommand{\cU}{\mc{U}}
\newcommand{\mf}[1]{\mathfrak{#1}}

\newcommand{\abs}[1]{\left| #1\right|}

\newcommand{\PP}{\mathbb{P}}
\newcommand{\EV}{\mathbb{E}}
\newcommand{\R}{\mathbb{R}}
\newcommand{\N}{\mathbb{N}}

\newcommand{\bd}{{\boldsymbol{d}}}
\newcommand{\Gr}{\operatorname{Gr}}
\newcommand{\SO}{\operatorname{SO}}
\newcommand{\supp}{\operatorname{supp}}

\newcommand{\fk}{{\mathfrak{k}}}
\newcommand{\fe}{{\mathfrak{e}}}

\newcommand{\graph}{{\mathrm{graph}}}
\newcommand{\cG}{{\mathcal{G}}}

\newcommand\con{{\mathfrak{m}}}

\newcommand{\cB}{{\mathcal{B}}}
\newcommand{\cF}{{\mathcal{F}}}
\newcommand{\cL}{{\mathcal{L}}}
\newcommand{\eps}{{\varepsilon}}

\newcommand{\fC}{{\mathfrak{C}}}

\newcommand{\fD}{{\mathfrak{D}}}

\newcommand{\cR}{{\mathcal{R}}}

\newcommand{\reg}{{\mathbf{k}}}

\newcommand{\dist}{{\mathbf{d}}}

\newcommand{\wt}[1]{\widetilde{#1}}

\newcommand{\bp}{{\boldsymbol{p}}}

\newcommand{\bA}{{\boldsymbol{A}}}

\newcommand{\ary}{\begin{eqnarray}}
	\newcommand{\eary}{\end{eqnarray}}

\newcommand{\aryst}{\begin{eqnarray*}}
	\newcommand{\earyst}{\end{eqnarray*}}

\newcommand{\enmt}{\begin{enumerate}}
	\newcommand{\eenmt}{\end{enumerate}}

\newcommand{\cV}{{\mathcal{V}}}

\theoremstyle{definition}

\title[Ergodicity of (co)expanding on average dynamical systems]{Ergodicity of (co)expanding on average random dynamical systems}
\author{Jonathan DeWitt}
\address{Department of Mathematics, The Pennsylvania State University, State College, PA 16802, USA}
\email{dewitt@psu.edu}
\author{Dmitry Dolgopyat}
\address{Department of Mathematics, The University of Maryland, College Park, MD 20742, USA}
\email{dolgop@umd.edu}
\author{Zhiyuan Zhang}
\address{
	Imperial College London, 
	London SW7 2AZ, 
	United Kingdom }
\email{zhiyuan.zhang@imperial.ac.uk}
\date{\today}

\begin{document}
\begin{abstract}
We prove ergodicity for random dynamics satisfying some expansion and irreducibility conditions. As an application we show that if $R_1,R_2\in \SO(d+1)$, $d\ge 2$, generate a dense subgroup, then the random dynamics of $R_1$ and $R_2$ on $S^d$ is stably ergodic. Previously this was only known  in even dimensions.  As a consequence, we deduce spectral gap and statistical limit theorems 
for such systems. In particular, our results apply 
in the presence of zero Lyapunov exponents.
\end{abstract}

\maketitle

\section{Introduction}

In this paper, we obtain a new dynamical criterion for the ergodicity of conservative random dynamical systems. Essentially, the criterion says that if the strong stable manifolds of the system are sufficiently random and the system on average expands volume on codimension $1$ planes, then volume is ergodic. The argument proceeds by studying the density points of an ergodic component, and the novelty in the argument is the application of a curved multilinear Kakeya type estimate, studied in \cite{guth2010endpoint, tao2020sharp, BCT}.
Such an estimate allows us to exploit the randomness of the most contracted directions 
and show ergodicity even when some of the Lyapunov exponents are zero.
As a particular application, we prove that if $(R_1,\ldots,R_m)$ is a tuple of isometries of $S^d$, $d\ge 2$ that generates a dense subgroup of $\SO(d+1)$, then $(R_1,\ldots,R_m)$ is stably ergodic. Previously, this was known only in even dimensions as in this case Lyapunov exponents are non-zero
\cite{dolgopyat2007simultaneous}.

Let $M$ be a connected,  closed smooth  $d$-dimensional 
Riemannian manifold (a closed manifold is a compact manifold without boundary) for an integer $d \geq 2$, and let $\vol$ denote the normalized volume  measure.
Let  $G = \Diff^{\infty}_{\vol}(M)$, the group of $C^{\infty}$ volume preserving diffeomorphisms of $M$.
Consider a compactly supported Borel probability measure $\mu$ on $G$. We study the IID random dynamics on $M$ driven by the measure $\mu$.

Throughout the paper, we let  $\N = \{1, 2, \ldots \}$ and let $\Omega = G^{\N}$ be equipped with probability measure $\mathbb{P} = \mu^{\N}$. We let $\mathbb{E}$ denote the expectation with respect to $\mathbb{P}$.
We will view $(G, \mu)$ as a probability space, and for each $\vartheta \in G$ we denote by $f_{\vartheta}$ the map on $M$.
Given $\omega = (\omega_n)_{n \in \N} \in \Omega$ 
we write $f^n_{\omega}=f_{\omega_n}\circ \cdots \circ f_{\omega_1}$ for its time $n$ dynamics; we denote $f^{k,j}_{\omega}=f_{\omega_{k+j}}\circ \cdots \circ f_{\omega_{k+1}}$.

We say that a Borel subset $A \subset M$ is \emph{$\mu$-almost surely invariant modulo $\vol$-null sets} (or simply $\mu$-almost surely invariant) if for $\mu$-almost every $f \in G$, the symmetric difference $f(A) \Delta A$ has zero volume.
We say that volume on $M$ is \emph{ergodic} if there does not exist any $\mu$-almost surely invariant subset of $M$ of non-trivial volume. 
By the Kakutani ergodic theorem \cite[Theorem 3]{Kakutani}
this is equivalent to the ergodicity of the   measure $\mu^{\N}\otimes \vol$ for the skew product $F\colon \Omega \times M \to \Omega\times M$ given by
$(\omega, x)\mapsto (\sigma \omega, f_{\omega_{1}} x)$, where $\sigma$ is the shift.

 Given a matrix $B$ we denote by $\DS  \con(B)=\min_{\|v\|=1} \|Bv\|$ the {\em conorm} of $B$.
 
 \begin{definition}
 \label{defn:main_hypotheses}
 Let $M$ be a connected, closed smooth Riemannian manifold. 
A probability measure $\mu$ on $\Diff^{1}(M)$ has
 {\em codimension $1$ hyperbolicity gap} if there exists $N\in \N$ such that for any $x\in M$ and any codimension $1$ subspace $E\in T_xM$,
\begin{equation}
  \label{CoDim1Gap}  
\mathbb{E}\!\!\left[\log \con(D_xf^{N}_{\omega}|E)\right]
 -  \mathbb{E}\left[\log\left\| P_{(D_xf^N_{\omega}(E))^{\perp}} D_xf^{N}_{\omega}  u_E \right\| \right ]     > 0,
\end{equation}
where $u_E$ is a unit vector in $E^\perp$ and $P_E$ is the orthogonal projection to $E$.
 
We will denote the set of compactly supported measures on $\Diff^\infty_{\vol}(M)$ enjoying codimension $1$ hyperbolicity gap by $\cG$.
\end{definition}

When the support of $\mu$ is volume preserving, by volume preservation,  we have 
$$\log\left\| P_{(Df^N_{\omega}(E))^{\perp}} D_xf^{N}_{\omega}  u_E \right\|  + \log |\det (D_x f^{N}_{\omega} | E)| = 0$$
for any integer $N \in \N$ and any $x, E$ in Definition \ref{defn:main_hypotheses}.
Thus \eqref{CoDim1Gap} can be rewritten as the following simpler to state condition:   there exists an integer $N > 0$ such that   for all $x, E$ as above
 \begin{equation}
 \label{D-1Gap}  \mathbb{E}\!\!\left[\log \con(D_xf^{N}_{\omega}|E)\right]
 +\mathbb{E}\!\!\left[
 \log |\det( D_xf^{N}_{\omega}| E )| \right]   > 0.
\end{equation}

The main result of this paper is the following.

\begin{theorem}\label{thm:main_thm}
The IID random dynamics driven by a measure $\mu$ from $\cG$ is ergodic for volume.
\end{theorem}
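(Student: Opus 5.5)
We argue by contradiction. Suppose $A\subset M$ is $\mu$-almost surely invariant with $0<\vol(A)<1$. Replace $A$ by its set of Lebesgue density points, and let $B$ be the density points of the complement; both are a.s. invariant modulo null sets. The aim is to show that, for a suitable point $x$ and small scale $r$, both $A$ and $B$ must occupy a definite proportion of a ball $B(x,r)$ after pulling back by random compositions $f^n_\omega$, contradicting density at all scales.

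\emph{Linear and nonlinear growth.} The first step converts the codimension~$1$ gap \eqref{D-1Gap} into an exponential growth statement. Fix a codimension~$1$ subspace $E\subset T_xM$, or more usefully a small smooth hypersurface piece $\Sigma$ through $x$ with tangent $E$. By \eqref{D-1Gap}, compactness of the Grassmann bundle, and continuity in $(x,E)$, there are $N$ and $\lambda>0$ such that
\[
\mathbb{E}\bigl[\log \con(D_xf^N_\omega|E)+\log|\det(D_xf^N_\omega|E)|\bigr]\ge\lambda
\]
uniformly. A standard Lyapunov-function/large-deviation argument for the Markov chain on the Grassmann bundle then applies. One uses $V(x,E)=\ell(\Sigma)^{-\delta}$ for a suitable size functional $\ell$ of a hypersurface piece, with small $\delta>0$, so that $\mathbb{E}\,V\circ f^N\le \gamma V+C$ with $\gamma<1$. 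This shows that typical images $f^n_\omega\Sigma$ grow to a fixed size $r_0$ in every direction while keeping bounded curvature (distortion control via $C^2$ bounds and the fact that $\con$ dominates). Simultaneously, volume preservation and the determinant term show that a thin tubular neighbourhood of $\Sigma$ of width $\rho$ is carried to a tube around $f^n_\omega\Sigma$ whose width shrinks by at most the factor $\|P_{\perp}Df u_E\|$. Thus a $\rho$-neighbourhood of a hypersurface is mapped to a $\rho'$-neighbourhood of a large hypersurface, with $\rho'/\rho$ controlled by the gap.

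\emph{Kakeya step (the main obstacle).} Near a density point of $A$, invert the picture. The complement $B$ has tiny relative measure in $B(x,r)$, and pushing forward by $f^n_\omega$ at the stopping time when $r$ expands to size $r_0$ produces a set of definite measure. The difficulty is that expansion is only guaranteed on codimension~$1$ planes, and the most contracted direction (the normal $u_E$, or with zero exponents possibly several directions) stays thin. So the image of $B(x,r)$ is a thin slab-like region (curved, tube-like in the contracted directions), and a single realization can hide $B$ in a thin set. Here I would use the randomness of the most contracted directions. Different words $\omega$ produce images of $B(x,r)$ that are neighbourhoods of hypersurfaces whose normals are transversally distributed. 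To make this precise, I would first establish, from the irreducibility implicit in \eqref{CoDim1Gap} holding for every $E$, a quantitative non-concentration estimate: the distribution of $D f^n_\omega$-pulled-back contracted directions has no mass concentrated near any proper subspace. Then I would apply the curved multilinear Kakeya inequality of \cite{guth2010endpoint, BCT, tao2020sharp} to $d$ transversal families of curved tubes/slabs. This yields that $\vol(A\cap f^{-n}_\omega(\cdot))$ cannot be simultaneously near-full on all these slabs unless $A$ has density bounded away from $1$ at scale $r$, or dually that $B$'s small mass must be spread evenly. I expect the main technical work to be verifying the transversality hypotheses with uniform constants, and handling curvature and distortion of the tubes so the curved Kakeya estimate applies at scale $\rho'$.

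\emph{Conclusion.} Combining the steps, $\vol(A\cap B(x,r))/\vol(B(x,r))\le 1-c$ for some $c>0$ independent of $r$, for a positive-measure set of $x\in A$. This contradicts the Lebesgue density theorem, so no nontrivial a.s. invariant set exists and volume is ergodic. Stability within $\cG$ then comes from the fact that the hypothesis is open.
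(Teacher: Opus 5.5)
\textbf{Verdict: there is a genuine gap.} Your central step never identifies a structure inside the invariant set to which a multilinear Kakeya inequality can be applied.

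\emph{The zooming step fails.} Pushing forward by the volume-preserving maps $f^n_\omega$ cannot produce a set of definite measure. Indeed $\vol(f^n_\omega(B\cap B(x,r)))=\vol(B\cap B(x,r))$, and by invariance the relative density of $B$ in $f^n_\omega(B(x,r))$ equals its relative density in $B(x,r)$. With zero exponents, a hypersurface piece also does not grow ``in every direction''; only its $(d-1)$-volume grows on average. Finally, multilinear Kakeya controls intersections of $d$ transverse families of neighbourhoods of \emph{curves}, not slabs around hypersurfaces.

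\emph{The missing idea.} A $\mu$-almost surely invariant set is, for a.e.\ $\omega$, saturated mod $0$ by the \emph{one-dimensional} $\omega$-strong stable manifolds. This is a Hopf-type fact; the gap guarantees only one negative exponent, and that suffices. One then needs:
\begin{enumerate}
\item $d$ independent words whose strong stable directions are in general position. This uses the non-concentration estimate $\PP(\angle(E^{ss}(\omega,x),W)<\rho)\le C'\rho^{\alpha}$, obtained from a fractional moment bound derived from the gap.
\item Pesin-block control: uniform $C^2$ bounds, proper crossing of $B_\delta(x)$, and absolute continuity with conditional densities comparable to arclength.
\end{enumerate}
Together these give sets $K_i\subset A$ laminated by nearly parallel $C^2$ curves in directions $v_i$, with $|\det(v_1,\dots,v_d)|>\theta_0$ and $\vol(A\cap B_\delta(x)\cap\bigcap_i K_i)\ge\eta\,\vol(A\cap B_\delta(x))$. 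The curved Kakeya estimate is then applied directly at scale $\delta$, with no pushing forward.

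\emph{The conclusion step is also wrong.} What this mechanism yields is a \emph{density gap}: a small but positive density $\kappa$ of $A$ in $B_\delta(x)$ is forbidden, since
\[
\kappa-C\delta^{d}\le C\delta^{-\eps_p}\kappa^{pd/(d-1)}.
\]
It does not give density $\le 1-c$ on a positive-measure subset of $A$. The estimate is consistent with $A$ having full measure in small balls around each of its points, so it cannot by itself contradict the Lebesgue density theorem. The contradiction must come from continuity of $y\mapsto\vol(A\cap B_\delta(y))/\vol(B_\delta(y))$ along a path from a low-density point to a high-density point, using connectedness of $M$ and the intermediate value theorem.

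\emph{The missing regularity input.} For $C^2$ curves only the non-endpoint curved estimate ($p>1$, \cite{tao2020sharp}) is available. It carries a loss $\delta^{-\eps_p}$, so the forbidden interval shrinks with $\delta$. To close the argument you must:
\begin{enumerate}
\item take tubes of width $\rho=\delta^{2d/s}$;
\item produce many points of $\bigcap_iK_i$ that are density points of every $K_i$ at scale $\rho$;
\item find points of density $\le\delta^s$ and $\ge1-\delta^s$, so that the intermediate value theorem lands inside the forbidden interval.
\end{enumerate}
All of this rests on $1_A\in H^s$ for some $s>0$. That follows from coexpansion on average (a consequence of the gap) and the spectral results of \cite{dewitt2025conservative}: finitely many totally ergodic components, each with $H^s$ indicator. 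Your proposal never uses this regularity. Without it, or an endpoint curved Kakeya estimate (not available for $C^2$ curves), the argument does not close.
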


The proof relies crucially on the following property.

\begin{definition}
\label{DefCoA}
Let $M$ be a connected, closed   smooth Riemannian manifold. 
A probability measure $\mu$ on $\Diff^1(M)$
is called 
{\em coexpanding on average} if there exists $N\in \N$ 
    such that for any $x \in M$ and any  $  \xi\!\in\! T^*_xM \setminus \{0\}$, we have
    \begin{equation}
    \frac{1}{N} \mathbb{E}\left(\log \frac{\|(D_x(f^N_{\omega})^*)^{-1}\xi\|}{\|\xi\|}\right)> 0.
    \end{equation}
\end{definition}

Given a separable complete metric space $X$, we denote by $\mathcal{M}(X)$, resp.\ $\mathcal{P}(X)$,  the space of Borel  measures, resp.\ Borel probability measures, on $X$ equipped with the weak-$*$ topology. Then for every bounded open subset $\mathcal{O} \subset \Diff^1(M)$, having codimension $1$ hyperbolicity gap and coexpanding on average are  both open properties in $\mathcal{P}( \mathcal{O} )$.

Definition \ref{DefCoA} comes from \cite{dewitt2025conservative}, where it is shown to imply an essential spectral gap and that volume has finitely many ergodic components.
 By \cite[Prop.\ 3.11]{dewitt2025conservative}, when the dynamics is volume preserving, the coexpanding on average property is actually equivalent to \lq\lq uniformly expanding in dimension $d-1$ \rq\rq given in \cite[Definition 1.1.3]{brown2025measure}.
Definition \ref{defn:main_hypotheses} is a small modification of \cite[Defn.~8]{zhang2019stable}. 
In fact, codimension $1$ hyperbolicity gap is a stronger condition,   as was pointed out  to us by Timoth\'ee B\'enard. The proof of the following lemma appears in Section \ref{sec:discussion_of_hypotheses}.

\begin{lemma}\label{lem:codimension_1_hyperbolicity_gaps_implies_coexpansion}
Every $\mu \in \cG$ is coexpanding on average.
\end{lemma}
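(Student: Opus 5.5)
The plan is to translate the coexpansion quantity in Definition \ref{DefCoA} into the normal-projection term appearing in \eqref{CoDim1Gap}, and then use volume preservation together with an elementary singular value inequality to remove the conorm term.

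\emph{Step 1: identify the dual action.} Fix $x\in M$, $\xi\in T^*_xM\setminus\{0\}$ and $\omega$, and write $B=D_xf^N_\omega$. Normalize $\xi=\langle u_E,\cdot\rangle$, where $E=\ker\xi$ has codimension $1$ and $u_E$ is a unit vector in $E^\perp$. Put $\eta=(B^*)^{-1}\xi$, so that $\eta(Bv)=\xi(v)$ for all $v$. Then $\eta$ vanishes on $B(E)$, hence $\eta=c\langle u',\cdot\rangle$ with $u'$ a unit normal to $B(E)$ and $|c|=\|\eta\|$. Evaluating at $v=u_E$ gives $c\langle u',Bu_E\rangle=1$, and therefore
\[
\frac{\|(B^*)^{-1}\xi\|}{\|\xi\|}=\frac{1}{\|P_{(B(E))^\perp}Bu_E\|}.
\]
By the volume preservation identity stated after Definition \ref{defn:main_hypotheses}, this ratio equals $|\det(B|E)|$. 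So it suffices to show $\mathbb{E}\log|\det(D_xf^N_\omega|E)|>0$ for every $x$ and every codimension $1$ subspace $E$.

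\emph{Step 2: absorb the conorm.} The quantity $|\det(B|E)|$ is the product of the $d-1$ singular values of $B|E$, each of which is at least $\con(B|E)$. Hence
\[
(d-1)\log\con(B|E)\le\log|\det(B|E)|.
\]
Using \eqref{D-1Gap}, we obtain
\[
0<\mathbb{E}\log\con(B|E)+\mathbb{E}\log|\det(B|E)|\le\Big(1+\tfrac{1}{d-1}\Big)\,\mathbb{E}\log|\det(B|E)|.
\]
It follows that $\mathbb{E}\log|\det(B|E)|>0$. Dividing by $N$ gives the coexpansion condition of Definition \ref{DefCoA}, with the same $N$ as in the hyperbolicity gap.

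\emph{Expected obstacle.} There is no serious difficulty here. The only care needed is in the linear algebra of Step 1, namely checking the sign and normalization of the dual action on covectors. The expectations are finite because $\mu$ is compactly supported in $\Diff^\infty_{\vol}(M)$, so all the logarithms involved are bounded.
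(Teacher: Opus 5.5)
Your proof is correct and follows the paper's argument: the key step in both is the bound $\log|\det(D_xf^N_\omega|E)|\ge (d-1)\log\con(D_xf^N_\omega|E)$, combined with \eqref{D-1Gap}, which gives $\mathbb{E}\log|\det(D_xf^N_\omega|E)|>0$. The only difference is in the reduction step. The paper cites \cite[Prop.~3.11]{dewitt2025conservative} to identify coexpansion on average with expansion on average of $(d-1)$-volumes, whereas you verify this identity directly, with the same $N$, by computing the dual action on $\langle u_E,\cdot\rangle$ and applying the volume-preservation identity.
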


From Theorem \ref{thm:main_thm}, Lemma \ref{lem:codimension_1_hyperbolicity_gaps_implies_coexpansion}
and the results of \cite[Section 7]{dewitt2025conservative}
we immediately obtain the following.

\begin{theorem}\label{thm:spectralgap}
    Let $\mu \in \cG$. Then there is $s > 0$ such that the generator $(\cL_\mu \phi)(x)=\int \phi(f(x)) d\mu(f)$ on $H^s(M)$ has a spectral gap. As a result, there exist $C, \kappa > 0$ such that for every $\phi \in H^s(M)$ with $\int \phi d \vol = 0$, we have
    $\DS \| \cL_{\mu}^n\phi  \|_{H^s(M)} < C e^{- n \kappa} \| \phi \|_{H^s(M)}.$
\end{theorem}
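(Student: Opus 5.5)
The plan is to derive Theorem \ref{thm:spectralgap} by combining three inputs: Lemma \ref{lem:codimension_1_hyperbolicity_gaps_implies_coexpansion}, the spectral results of \cite[Section 7]{dewitt2025conservative}, and Theorem \ref{thm:main_thm}.

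\textbf{Step 1: quasi-compactness.} Fix $\mu\in\cG$. By Lemma \ref{lem:codimension_1_hyperbolicity_gaps_implies_coexpansion}, $\mu$ is coexpanding on average, so we may invoke \cite[Section 7]{dewitt2025conservative}. For such $\mu$ (compactly supported on $\Diff^\infty_{\vol}(M)$) there is $s>0$ for which $\cL_\mu$ acting on $H^s(M)$ has essential spectral radius strictly less than $1$. The mechanism is a Lasota--Yorke type inequality
$$\|\cL_\mu^{n}\phi\|_{H^s}\le C\theta^n\|\phi\|_{H^s}+C_n\|\phi\|_{H^{s'}}, \qquad s'<s,\ \theta<1,$$
which follows from coexpansion on cotangent vectors. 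Combined with the compact embedding $H^s\hookrightarrow H^{s'}$ and Hennion's theorem, this gives quasi-compactness. Since $\cL_\mu$ is a contraction on $L^2$ (volume is preserved) and $H^s\subset L^2$, every eigenvalue of modulus at least $1$ on $H^s$ lies on the unit circle, and there are finitely many of them, each of finite multiplicity.

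\textbf{Step 2: peripheral spectrum.} It remains to show that the only peripheral eigenvalue is $1$, with eigenspace the constants. Suppose $\cL_\mu\phi=\lambda\phi$ with $|\lambda|=1$ and $\phi\in H^s\subset L^2$. Write $U_f\phi=\phi\circ f$, which is unitary on $L^2$ because $f$ preserves volume. Then
$$\int U_f\phi\,d\mu(f)=\lambda\phi,$$
an average of vectors of norm $\|\phi\|_{L^2}$ with norm $\|\phi\|_{L^2}$. Strict convexity of $L^2$ forces $\phi\circ f=\lambda\phi$ for $\mu$-a.e.\ $f$. Consequently $|\phi|$ is $\mu$-a.s.\ invariant, so by Theorem \ref{thm:main_thm} it is a.e.\ constant.

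If $\lambda\neq1$, we need an extra argument, since ergodicity of $\vol$ alone does not exclude this. One option is to use the fact, also from \cite{dewitt2025conservative}, that coexpansion of the sequence of iterates rules out such eigenfunctions. Alternatively, one can check that $\mu^{*k}\in\cG$ for the relevant powers: condition \eqref{D-1Gap} is stated with a fixed $N$, and taking multiples of $N$ preserves the gap by subadditivity along cocycles. Applying Theorem \ref{thm:main_thm} to $\mu^{*k}$, where $\lambda^k$ is close to $1$ or equal to $1$ in the root-of-unity case, gives that $\phi$ is constant, and hence $\lambda=1$. For the general $|\lambda|=1$ case, I would argue that the set of peripheral eigenvalues is a finite group, hence consists of roots of unity, and then apply ergodicity of $\mu^{*k}$.

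\textbf{Step 3: conclusion.} Once the peripheral spectrum is $\{1\}$ with a one-dimensional eigenspace, the spectral decomposition of the quasi-compact operator gives
$$\|\cL_\mu^n\phi\|_{H^s}\le Ce^{-n\kappa}\|\phi\|_{H^s}$$
on the complementary invariant subspace. That subspace is exactly $\{\phi:\int\phi\,d\vol=0\}$, because $\int\cdot\,d\vol$ is $\cL_\mu$-invariant. The main obstacle is the exclusion of non-trivial unimodular eigenvalues (total ergodicity) in Step 2, which I expect to handle by applying Theorem \ref{thm:main_thm} to convolution powers of $\mu$.
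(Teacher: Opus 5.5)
Your proposal is correct and takes essentially the same route as the paper. The paper gets quasi-compactness on $H^s$ from coexpansion (Lemma \ref{lem:codimension_1_hyperbolicity_gaps_implies_coexpansion} together with \cite{dewitt2025conservative}), and removes the peripheral spectrum by total ergodicity, i.e.\ Theorem \ref{thm:main_thm} applied to $\mu^{*k}$, which lies in $\cG$ by superadditivity of $\log\con(\cdot|E)+\log|\det(\cdot|E)|$ along the Grassmannian cocycle. Two details should be made explicit: a unimodular eigenfunction has constant modulus, so its powers stay in $H^s$ (as $H^s\cap L^\infty$ is an algebra for $0<s<1$) and force $\lambda$ to be a root of unity; and the $L^2$-contraction rules out a Jordan block at $1$ (the remark about $\lambda^k$ being ``close to $1$'' is unnecessary).
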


We now describe one of our  main applications. Let $d \geq 2$ and let $M = S^d$ (the standard $d$-dimensional sphere). We can naturally identify $\SO(d+1)$ with a subgroup of $G = \Diff^{\infty}_{\vol}(M)$.

\begin{theorem}\label{cor:spheres_corollary}
	Let $\mu_0$ be a Borel probability measure on $\SO(d+1)$,  $d\ge 2$,  such that $\supp(\mu_0)$ generates a dense subgroup of $\SO(d+1)$. 
 Then there is a neighborhood $\cU$ of $\mu_0$ in the space of Borel probability measures on $G$ with respect to the weak-$*$ topology   such that every $\mu \in \cU$  is ergodic for volume.
\end{theorem}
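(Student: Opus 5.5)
Since Theorem~\ref{thm:main_thm} gives ergodicity for every $\mu\in\cG$ and $\cG$ is open (in $\mathcal{P}(\mathcal{O})$ for bounded open $\mathcal{O}$), it suffices to show that $\mu_0$ itself, or rather a suitable convolution power, lies in $\cG$, perhaps after a reduction. The isometries in $\supp\mu_0$ have no expansion at all: for $\mu_0$, $\con(D_xf|E)=1$ and $\det(D_xf|E)=1$, so the left side of \eqref{D-1Gap} equals $0$. Thus $\mu_0\notin\cG$, and the plan must use the perturbation. We need to show that every $\mu$ near $\mu_0$ is either ergodic for another reason or lies in $\cG$. The natural route is the dichotomy strategy of \cite{dolgopyat2007simultaneous}. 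Either $\mu$ is (after replacing $\mu$ by its $N$-fold convolution) codimension-1 hyperbolicity-gapped, or $\mu$ nearly preserves a Riemannian-type structure. In the latter case it is conjugate, or close, to an isometric system, whose ergodicity follows from density of the generated group via Weyl equidistribution and a KAM-type rigidity argument.

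\textbf{Step 1: Taylor expansion of the exponents.} For $\mu$ near $\mu_0$, write $f=R\circ(\mathrm{Id}+\eps h)$ heuristically. Using the density of $\langle\supp\mu_0\rangle$, the random walk on the Grassmann bundle of codimension-1 planes over $S^d$ equidistributes rapidly, by the spectral gap of Bourgain--Gamburd/Dolgopyat type for dense subgroups of $\SO(d+1)$ acting on $L^2$. Averaging \eqref{D-1Gap} over $N$ steps then makes the first-order term vanish by volume preservation. The second-order term is a quadratic form $Q(h)\ge0$ in the perturbation, which is positive unless $h$ is, to leading order, a Killing field modulo coboundaries. I expect the key computation to show that $\log\con$ is strictly concave, like $-\tfrac12|\text{shear}|^2$, plus the log-det contributions. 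The point is that $\con$, as a minimum, gains strictly, and this gives a positive $\eps^2$ gain whenever the symmetric traceless part of $Dh$ is nonzero after averaging.

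\textbf{Step 2: The non-degenerate case.} If $Q(h)\gtrsim\eps^2$, take $N\asymp\eps^{-2}$ or a fixed large $N$ with errors $O(\eps^3N)$. This gives \eqref{D-1Gap} uniformly in $(x,E)$, so $\mu\in\cG$ and Theorem~\ref{thm:main_thm} applies.

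\textbf{Step 3: The degenerate case.} Here the system is $O(\eps^{3/2})$-close to one preserving a perturbed round metric. We iterate the normal-form step \`a la KAM, conjugating to reduce the non-isometric part. If the defect never becomes visible at any scale, the limiting conjugacy is smooth and $\mu$ is conjugate to a measure on an isometry group close to $\SO(d+1)$, hence generating a dense subgroup. In that case volume, being the conjugated invariant measure, is ergodic by unique ergodicity of the dense isometric action.

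\textbf{Main obstacle.} The hardest part is making Steps 1 and 3 uniform over a weak-$*$ neighborhood $\cU$, which may include measures with no smallness in $C^k$ for individual diffeomorphisms outside a small-mass set. One must also ensure that the dichotomy covers all of $\cU$ with a single $N$. I would first handle this via the diophantine (spectral-gap) property of dense subgroups of $\SO(d+1)$, which gives tame estimates for the cohomological equations in Step 3.
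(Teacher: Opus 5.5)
\textbf{Main gap.} The gap is the point you flag as your ``main obstacle'', and your proposed remedy (Diophantine estimates for the cohomological equations) cannot close it.

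A weak-$*$ neighborhood of $\mu_0$ contains measures such as $(1-t)\mu_0+t\nu$ with $t>0$ small and $\nu$ arbitrary. So $\mu\in\cU$ may give positive mass to diffeomorphisms that are far from $\SO(d+1)$ in every $C^k$ norm, and $\mu$ need not even be compactly supported. For such $\mu$ there is no expansion $f=R\circ(\mathrm{Id}+\eps h)$, no KAM scheme, and no conjugacy of $\mu$ to isometries. There is also no way to place $\mu$ in $\cG$: its elements are compactly supported, and $\cG$ is open only inside $\mathcal{P}(\mathcal{O})$ for bounded $\mathcal{O}$. Moreover, the dichotomy of \cite{dolgopyat2007simultaneous} concerns finitely many maps.

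\textbf{The missing idea.} You never need to analyze $\mu$ itself. A $\mu$-almost surely invariant set is invariant under $\mu$-a.e.\ $f$, so it suffices to prove ergodicity of finite tuples drawn from sets of positive $\mu$-measure.
\begin{enumerate}
\item Pick $R_1,\dots,R_m\in\supp\mu_0$ generating a dense subgroup.
\item Let $\cV_j$ be the open $C^{\reg_0}$-ball of radius $\eps_0$ about $R_j$.
\item Put $\cU=\{\mu:\ \mu(\cV_j)>0 \text{ for all } j\}$. This set is weak-$*$ open, by lower semicontinuity of $\mu\mapsto\mu(\cV_j)$, and it contains $\mu_0$.
\item If $A$ were invariant with intermediate volume, you could choose $f_j\in\cV_j$ all preserving $A$.
\item By Proposition~\ref{PrDKOdd}, either the $f_j$ are smoothly conjugate to rotations that still generate a dense subgroup, so volume is ergodic by transitivity of $\SO(d+1)$; or the uniform measure on $(f_1,\dots,f_m)$ has codimension $1$ hyperbolicity gap, and Theorem~\ref{thm:main_thm} applies.
\end{enumerate}
This is the paper's argument, and it removes every uniformity issue at once.

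\textbf{Step 1 mechanism.} Your Step 1 also misidentifies what happens in the non-degenerate case. It is not true in general that $\con$ gains. When $\lambda_1>0$, \cite{dolgopyat2007simultaneous} shows that the exponents satisfy $\lambda_j\approx\frac{d-2j+1}{d-1}\lambda_1$. Hence $\frac1N\log\con(D_xf^N_\omega|E)\approx\frac{3-d}{d-1}\lambda_1$, which is negative for $d\ge4$, while $\frac1N\log|\det(D_xf^N_\omega|E)|\approx\lambda_1$. So \eqref{D-1Gap} holds only because the sum is about $\frac{2}{d-1}\lambda_1>0$.

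What is needed is this relation for the whole spectrum, together with large-deviation bounds uniform in $(x,E)$. These are Corollary 4(a) and Lemma 4 of \cite{dolgopyat2007simultaneous} with $r=d-1$, and they are exactly what the proof of Proposition~\ref{PrDKOdd} imports. Positivity of a second-order quadratic form alone does not give \eqref{D-1Gap} for every $(x,E)$.

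Finally, a uniform $L^2$ spectral gap for an arbitrary dense subgroup of $\SO(d+1)$ is an open problem (see the remark after Corollary~\ref{CrSpGap}). Step 1 therefore cannot rest on it.
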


\noindent The proof of Theorem \ref{cor:spheres_corollary} combines the results of \cite{dolgopyat2007simultaneous}
with Theorem \ref{thm:main_thm} 
and is explained
in Section \ref{sec:discussion_of_hypotheses}.

In \cite{dolgopyat2007simultaneous},  Theorem \ref{cor:spheres_corollary} is proved only for even $d$ (for   driving measures of finite support),
since in that case the system has no zero Lyapunov exponents, 
due to an approximate symmetry and simplicity of the Lyapunov spectrum. The argument of
\cite{dolgopyat2007simultaneous} does not extend to the odd dimensional case, see \S \ref{SSPrior} for an additional 
discussion.
Observe that the hypothesis of Definition~\ref{defn:main_hypotheses} only gives \emph{one} negative Lyapunov exponent, which suffices for our arguments.
Note that a zero Lyapunov exponent for the random dynamics can appear quite often: for example, whenever $M$ is of odd dimension and $\mu$ is a symmetric measure.

Once a coexpanding on average system is known to be ergodic, many additional properties like exponential mixing and the central limit theorem follow from \cite{dewitt2025conservative}. In particular, we have the following.

\begin{corollary}
\label{CrSpGap}
Under the conditions of Theorem \ref{cor:spheres_corollary} 
there exist neighborhoods $\mathcal{O}$ of $\SO(d+1)$ in $G$ and 
$\mathcal{U}$
of $\mu_0$ with respect to the weak-$*$ topology on $  \mathcal{P}(\mathcal{O})$, such that for each $\mu\in\mathcal{U}$, 

either (1) There exists a metric $\mathfrak{g}$ on $S^d$ which is preserved by $\mu$ almost every diffeomorphism;

or (2) 
the generator 
$(\cL_\mu \phi)(x)=\int \phi(fx) d\mu(f) $
has a spectral gap on $L^2$.

If the second alternative holds then the system 
enjoys exponential mixing of all orders: for each $r\in \mathbb{N}$ there are constants $C_r>0$ and $\theta_r<1$
such that for all bounded functions 
$\phi_1, \dots ,\phi_r$ for each tuple $n_1, \dots ,n_r$ we have
$$ \left| \mathbb{E} \left[\int \prod_{j=1}^r \phi_j(f_\omega^{n_j} x) d\vol(x) 
-\prod_{j=1}^r \int \phi_j d\vol\right] \right|\leq C_r \prod_{j=1}^r \|\phi_j\|_{L^\infty} \theta_r^L
\quad\text{where} \quad L=\min_{i\neq j} |n_i-n_j|.
$$
Moreover, the Central Limit Theorem holds: if $x$ is uniformly distributed on $M$
 and $\omega$ is distributed
according to $\mathbb{P}$ then for each zero mean $\phi\in L^\infty$
the random variable $S_N/\sqrt{N}$ converges in law  to a normal random variable with zero mean and 
variance $\sigma^2$ where
$$S_N=\sum_{n=0}^{N-1} \phi(f^n_\omega x)\quad\text{and}\quad 
\sigma^2=\int \phi^2 d\vol+2\sum_{n=1}^\infty \mathbb{E} \left[ \int  \phi(x) \phi(f_\omega^n x) d\vol(x)\right].$$
\end{corollary}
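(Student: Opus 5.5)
The proof is a dichotomy argument built on the perturbative results of \cite{dolgopyat2007simultaneous}, combined with Theorem \ref{thm:main_thm}, Lemma \ref{lem:codimension_1_hyperbolicity_gaps_implies_coexpansion} and the spectral results of \cite[Section 7]{dewitt2025conservative}.

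\textbf{Step 1: the dichotomy.} Choose a small neighborhood $\mathcal{O}$ of $\SO(d+1)$ in $G$ and a weak-$*$ neighborhood $\mathcal{U}$ of $\mu_0$ in $\mathcal{P}(\mathcal{O})$. For $\mu\in\mathcal{U}$, the analysis of \cite{dolgopyat2007simultaneous} of random perturbations of dense isometric systems gives the following alternative.
\begin{enumerate}
\item[(a)] Either $\mu$-almost every diffeomorphism preserves a common smooth Riemannian metric $\mathfrak{g}$ close to the round one. This is alternative (1) of the Corollary.
\item[(b)] Or the Lyapunov spectrum of $\mu$ with respect to volume is ``approximately symmetric with a definite gap''. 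More precisely, for each codimension $1$ subspace $E$, the averaged conorm of $D f^N_\omega|_E$, taken in log, dominates the averaged log-volume distortion on $E$, which is exactly \eqref{D-1Gap}.
\end{enumerate}
The mechanism in case (b) is the one of \cite{dolgopyat2007simultaneous}. Near isometries, the sum of the Lyapunov exponents vanishes and the top exponent is positive by a Furstenberg-type argument. The bottom exponent is then strictly negative and, to leading order, equal to minus the top exponent. Since $d\ge 2$, a codimension $1$ subspace contains a nonzero vector of the $d-1$ non-bottom directions. So the quantity in \eqref{D-1Gap} is, to leading order, the top exponent minus the bottom one plus intermediate terms. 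This is positive even if the middle exponents vanish, which is the odd-dimensional case.

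To make this uniform over $x$ and $E$, rather than only along a measured Lyapunov splitting, I would run the standard near-isometry expansion. Write $f_\omega=R_\omega\circ h_\omega$ with $h_\omega$ close to the identity. Expand $\log \con(D f^N|_E)+\log|\det(Df^N|_E)|$ to second order in the perturbation size $\epsilon$. The second-order term is a quadratic form averaged over the dense group action, hence over Haar measure on $\SO(d+1)$. It is strictly positive unless the perturbation is a coboundary, which corresponds to alternative (a).

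\textbf{Step 2: ergodicity and spectral gap.} In case (b), $\mu\in\cG$, so by Theorem \ref{thm:main_thm} volume is ergodic. Lemma \ref{lem:codimension_1_hyperbolicity_gaps_implies_coexpansion} shows $\mu$ is coexpanding on average. Theorem \ref{thm:spectralgap} then gives a spectral gap on $H^s$. The results of \cite[Section 7]{dewitt2025conservative} upgrade ergodicity plus coexpansion on average to a spectral gap on $L^2$, which is alternative (2).

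\textbf{Step 3: mixing and the CLT.} Multiple exponential mixing follows from the $L^2$ spectral gap by the usual telescoping argument. Write the correlation by conditioning successively on the increments $n_{j+1}-n_j$. At each step apply $\|\cL_\mu^n\phi-\int\phi\|_{L^2}\le C\theta^n\|\phi\|_{L^2}$ for bounded $\phi$, bounding the remaining factors by their sup norms. The CLT follows from Gordin's martingale method. The function $\psi=\sum_n\cL_\mu^n\phi$ converges in $L^2$, and one decomposes $\phi=(\psi-\cL_\mu\psi)$ into a reverse martingale difference plus a coboundary along the Markov chain. Computing the asymptotic variance gives the stated $\sigma^2$.

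\textbf{Main obstacle.} The hard step is Step 1(b): obtaining \eqref{D-1Gap} uniformly in $(x,E)$ with only a dense, not necessarily spanning, rotation part. This also requires a single $N$ and the precise second-order expansion. I would first try to import directly the key estimate of \cite{dolgopyat2007simultaneous} on the second-order term of $\log\|Df\,v\|$ averaged over $\mu_0$-words. I would then handle the conorm term via the bound $\log\con\ge$ average over the Grassmannian up to $O(\epsilon^3)$, which is valid because near isometries the derivative is $\epsilon$-close to orthogonal.
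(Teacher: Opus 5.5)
There is a genuine gap in your Step 1, and the route you sketch for it would not close it. The only available input is Proposition \ref{PrDKOdd} (i.e.\ \cite{dolgopyat2007simultaneous}). It concerns the uniform measure on a \emph{finite tuple} that is $C^{\reg_0}$-close to a \emph{fixed} dense tuple $(R_1,\dots,R_m)$, with a threshold $\eps_0$ that depends on that tuple (through its Diophantine/equidistribution properties).

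You need the dichotomy for every $\mu$ in a weak-$*$ neighborhood of $\mu_0$ in $\mathcal{P}(\mathcal{O})$. Since $\mathcal{O}$ must contain all of $\SO(d+1)$, such a $\mu$ can put mass on maps near arbitrary rotations. Weak-$*$ closeness also does not control the high-frequency equidistribution of the rotation part of $\mu$, which is what the threshold depends on. So there is no uniform smallness regime in which your second-order expansion ``averaged over Haar measure'' is justified.

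Even for a single tuple, a one-shot expansion cannot give the exact alternative (a). Smallness of the quadratic term only says the perturbation is close to a coboundary. Turning $\lambda_1=0$ into an actual invariant metric is the content of the KAM scheme in \cite{dolgopyat2007simultaneous}.

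Your positivity heuristic is also off. For typical $E$, the quantity in \eqref{D-1Gap} is the gap between the two most contracting exponents. This does not follow from ``bottom $\approx -$top''. It follows from the approximate arithmetic-progression structure of the spectrum, which is what \eqref{GapTerms} encodes.

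The missing idea is a reduction that never needs $\mu\in\cG$.
\begin{enumerate}
\item Fix a dense tuple $R_1,\dots,R_m$ in $\supp(\mu_0)$. Apply the finite-tuple dichotomy to $(R_1,\dots,R_m,R)$ for each $R\in\SO(d+1)$, and use compactness to obtain finitely many thresholds. These define $\mathcal{O}$, small neighborhoods $\mathcal{V}_j$ of $R_j$, and $\mathcal{U}=\{\mu:\mu(\mathcal{V}_j)>0\ \forall j\}$.
\item For $\mu\in\mathcal{U}$, consider $\mathcal{P}_\mu=\mu_1\times\cdots\times\mu_m\times\mu$ with $\mu_j=\mu(\cdot\,|\,\mathcal{V}_j)$. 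Almost every $(m+1)$-tuple either preserves a metric or has the codimension-one gap.
\item In the gap case the tuple has an $L^2$ spectral gap. This uses Theorem \ref{thm:main_thm} for all convolution powers, giving total ergodicity; mere ergodicity, which is all your Step 2 invokes, does not exclude peripheral eigenvalues other than $1$. It also uses coexpansion of the \emph{inverse} tuple, needed to pass from $H^s$ to $L^2$, which you omit.
\item If the metric case holds almost surely, Fubini plus uniqueness of the metric invariant under the dense tuple $(f_1,\dots,f_m)$ gives alternative (1).
\item Otherwise some tuple in $\supp(\mu)$ has an $L^2$ gap, which is open under perturbation \cite[Theorem 7.6]{dewitt2025conservative}. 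Writing $\mu=p\tilde\mu+(1-p)\nu$ then gives $\|\cL_\mu^N\|_{L^2_0}\le 1-p^N\theta$.
\end{enumerate}
The point is that an $L^2$ spectral gap survives convex combinations, while nothing comparable is available for \eqref{D-1Gap}. That is why alternative (2) is a spectral gap rather than membership in $\cG$. Your Step 3 is fine and matches what the paper takes from \cite[\S7.3, \S7.8]{dewitt2025conservative}.
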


It is a well-known open problem whether the spectral gap in $L^2$ also holds in case (1): see
\cite{Gamburd1999-cc, LPSI, LubotzkyWeiss1993,  BourgainGamburd, Fisher-IMRN} and the references therein.

Another application of our main result concerns measure and topological rigidity for random dynamics. The study of rigidity in this setting was pioneered in two breakthrough works: Bourgain–Furman–Lindenstrauss–Mozes \cite{BFLM} for torus linear automorphisms, and Benoist–Quint \cite{BQ1, BQ2, BQ3} for semi-simple Lie group actions on homogeneous spaces. 
The approach of \cite{BQ1}  has been  subsequently generalized  to obtain Teichmüller dynamics rigidity of measures invariant under the upper-triangular subgroup of $SL(2, \R)$ by Eskin, Mirzakhani and Mohammadi \cite{EM, EMM}.
This framework has deeply influenced later developments, including \cite{EL, BRH, Katz, CD, Rod}, culminating in the recent work \cite{brown2025measure}.
Recently, Bénard and He \cite{BH1, BH2} provided an alternative proof of Benoist–Quint's theorem for simple Lie group \cite{BQ1} using techniques closer in spirit to \cite{BFLM}. 
Extending this methodology, \cite{BZ} established measure rigidity and orbit classification for a class of almost conservative random dynamics. 
A key innovation in \cite{BZ} is the ability to handle zero Lyapunov exponents for nonlinear random dynamics. This point  relies crucially on the existence of a spectral gap established in the present paper.

\begin{remark}
Although we have written this paper for measures $\mu$ supported on $\Diff^{\infty}_{\vol}(M)$, Theorem \ref{thm:main_thm} also holds for measures compactly supported on $\Diff^{3}_{\vol}(M)$. While most of the arguments given in the present paper do not require that the maps be more that $C^{1+\text{H\"older}}$, we use \cite[Thm.~1.1]{dewitt2025conservative} to obtain the essential spectral gap.
 The essential spectral gap plays a key role in our argument. 
 \cite{dewitt2025conservative} assumes
that $\mu$ is supported on $C^{\infty}$ diffeomorphisms.
In fact, 
one can obtain the same result as \cite[Thm.~1.1]{dewitt2025conservative} under the weaker assumption that $\mu$ is supported on $\Diff^{1+\alpha}_{\vol}(M)$ \cite{DTZ}. 
We believe that current techniques allow to prove Theorem \ref{thm:main_thm} for $C^2$ systems.
Below $C^2$ many statements need to be strengthened. For example, the multilinear Kakeya estimate we use, Theorem \ref{thm:taos_theorem}, is only stated for $C^2$ curves. 
Theorem \ref{thm:main_thm} is stated for $C^3$ because we rely on Proposition \ref{prop SMT} and the
proof of that proposition given \cite{DDZ2} loses one derivative. While it seems to be possible to improve the
regularity in Proposition \ref{prop SMT} using a more complicated argument we decided to limit our setting to
$C^3$ systems in order to simplify the presentation.
\end{remark}

\subsection{Outline of the proof and paper}

Because  measures from $\cG$ are coexpanding on average, volume has only finitely many totally ergodic components \cite{dewitt2025conservative}
(a stationary measure $\nu$ is totally ergodic if it is ergodic for every power $\mu^q$ of the driving measure $\mu$). So, if one wants to prove ergodicity it suffices to show that there is only one such totally ergodic component.   

The main insight of this paper is a new method to exploit the geometric structure of an arbitrary invariant set $A$. We will first show that a definite portion of $A$ inside of a small ball $B$ is the intersection of finitely many subsets $K_1, \ldots, K_d$ of $A \cap B$,  where each $K_i$ is saturated by long local strong stable manifolds for an infinite itinerary $\omega^i$. Moreover, it can be arranged that for a given word $\omega^i$ all the local strong stable manifolds on this set  point in roughly the same direction, and the directions for the different words are in general position.
The new analytical ingredient is an upper bound for the volume of $B \cap A$ in terms of the volume of $( K_i )_{i = 1}^{d}$.

To see such an estimate in a simplified model, assume for now $d=3$ and $(K_i)_{i=1}^{3}$ are saturated by parallel, straight segments. We may deduce from the Loomis-Whitney inequality \cite{LW} that 
$$\vol(K_1 \cap K_2 \cap K_3)\cdot \vol(B)^{1/2} \leq C (\prod_{i=1}^{3} \vol(K_i))^{1/2}.$$ 
Let $\kappa = \vol(A\cap B) / \vol(B)$, then $\vol(K_1 \cap K_2 \cap K_3) / \vol(B)$ is at most $O(\kappa^{3/2})$, which is much smaller than $\kappa$, unless $\kappa$ is already bounded from below by a positive absolute constant. 
This shows that the set $A$ exhibits an {\it density gap}: the density of $A$ inside 
small balls 
avoids a certain interval in $(0, 1)$.  This clearly contradicts the continuity of the density function if $A$ is not of full volume. Naturally, this picture is simpler than reality in a number of ways.

First, local strong stable manifolds are not straight segments. It turns out that one can replace the Loomis-Whitney inequality by a {\it Multilinear Kakeya Estimate}. 
This belongs to an interesting line of research started from \cite{BCT}, which takes inspiration from both the Restriction Conjecture and the more classical Multilinear Inequalities. The specific estimate we use is a 
curved  multilinear Kakeya estimate
from \cite{tao2020sharp}, which we restate in a more geometric form suitable for our purposes in Section \ref{sec:geometric_lemma_following_guth}. To apply this estimate, additional non-trivial properties of the strong stable manifolds are needed. Whereas one usually obtains the existence of stable manifolds from ergodic theoretic considerations, we actually need to know that at almost every point, with definite probability, the stable manifold exists, is sufficiently regular,
 and is not contained in any given codimension one subspace of the tangent space.
The precise statements are explained in Section \ref{sec:strong_stables} with more technical proofs relegated to
the appendix.

Secondly, we need the fact that invariant sets have many points of high density at all sufficiently small scales.
This relies crucially on the main results of \cite{dewitt2025conservative}.
Namely, under the coexpanding on average assumption above,  the random dynamics is in fact already quite close to being ergodic. 
There exist some $q$ and $\ell$, and a disjoint decomposition $M=U_1\cup\cdots \cup U_\ell$ such that the $q$th power of the random dynamics leaves each $U_{i}$, $1\le i\le \ell$,  invariant. Moreover every power of the $q$th power dynamics on $U_{i}$ is ergodic. Thus to show ergodicity it suffices to show that there is only one such set $U_i$. That paper in fact provides more information about these sets: it shows that $1_{U_i}$ is a function in $H^s$,   for some
$s>0$. This implies that the sets $U_i$ have additional regularity properties, which are crucial for the proof and are explained in Section \ref{sec:geometric_measure_theory}.

We combine the two ingredients above to show that if there is more than one ergodic component then the 
system exhibits a density gap following the outline at the beginning of this subsection. This
is carried out in Section \ref{sec:main_estimate}.

The paper also has one additional section. Section \ref{sec:discussion_of_hypotheses} contains the proofs of the results other than the main theorem stated in the introduction as well as properties of systems with codimension
one hyperbolicity gap, including the proof of coexpansion on average as well as the relation with the results
of \cite{dolgopyat2007simultaneous}. 

\subsection{Relationship with prior works}
\label{SSPrior}

The main contribution of this paper is a new type of ergodicity argument for smooth systems.
For systems admitting a large group of symmetries harmonic analysis or representation theory provide 
powerful tools to establish ergodicity. Beyond such systems, the main tool for studying ergodic properties
comes from hyperbolicity theory. The Hopf argument going back to \cite{Hopf}, which considered surfaces of negative curvature, is the main
method for establishing ergodicity. The Hopf argument
has been extended to uniformly hyperbolic systems \cite{Anosov, AnosovSinai}, partially hyperbolic systems 
\cite{brin1974partially, grayson1994stably,
burns2010ergodicity}, and non-uniformly hyperbolic systems \cite{pesin}, 
hyperbolic systems with singularities \cite{ChernovMarkarian}, among others.

For small perturbations of random rotations studied in \cite{dolgopyat2007simultaneous},
the system is non-uniformly hyperbolic if $d$ is even, so the classical Hopf argument readily gives
ergodicity as there are no zero Lyapunov exponents. In contrast, if $d$ is odd, the system is 
non-uniformly {\em partially} hyperbolic.
For uniformly partially hyperbolic systems, the strongest known ergodicity results are due to 
\cite{burns2010ergodicity}. This result requires strong control on the regularity of invariant foliations
which is far out of reach in the non-uniform setting. Therefore new ideas were needed to handle the odd case.

Much more recently a different approach to ergodicity in partially hyperbolic systems
with zero exponents has been developed \cite{TsujiiFat, TsujiiSE, TsujiiFlow, CL22, TZ26}. 
This method was originally created to prove ergodicity
of systems with singularities \cite{LasotaYorke} and to analyze fine properties
of uniformly hyperbolic systems \cite{GouezelLiverani, BaladiTsujii}
and requires establishing quasi-compactness of the generator by proving an 
appropriate Lasota-Yorke inequality. The spectral approach was also used in
 \cite{tsujii2023virtually} to study endomorphisms and \cite{dewitt2025conservative} to study random systems. 
 Unfortunately the Lasota-Yorke inequality only gives quasi-compactness and as a result 
one only obtains the finiteness of the number of ergodic components.

The key new ingredient of the present paper is the use of Multilinear Kakeya
estimate, a far-reaching  generalization of the classical Loomis-Whitney inequality.
This sharp estimate  has the advantage that 
it requires very little regularity in the transverse direction thus
allowing one to overcome the fractal nature of invariant laminations in the non-uniform setting.
More precisely,  the main analytical estimate used in this paper is Theorem \ref{thm:taos_theorem},
 the (non-endpoint) Curved Multilinear Kakeya Estimate, proved by Tao in \cite{tao2020sharp}.  
 Tao's theorem generalizes the main result in \cite{BCT} under natural conditions, by replacing the straight tubes by tubular neighborhoods of $C^2$ curves. 
 In fact, we only need a weaker version of Tao's theorem, with a logarithmic loss, which can already be deduced from \cite{BCT} and an induction-on-scale argument (see Remark 1.5 in \cite{tao2020sharp}).
 The results in both \cite{BCT} and \cite{tao2020sharp} concern the non-endpoint estimate (namely, $\bp > 1/(d-1)$ in Theorem \ref{thm:taos_theorem}), and both use the heat flow argument. 
 See \cite{bourgain2011bounds} for an application of these estimates to the Restriction Conjecture.

For our dynamical application, it would be more natural to consider the analogous estimate with $\bp = 1/(d-1)$ (the endpoint case).
In fact, in answering a conjecture in \cite{BCT}, the  Endpoint case of Multilinear Kakeya Estimate was proved by Guth in \cite{guth2010endpoint} using the Polynomial Method. It is not hard to adapt the method in \cite{guth2010endpoint} to prove the endpoint estimate when the straight tubes are replaced by the tubular neighborhoods of polynomial curves with uniformly bounded degree (see \cite[comment below Theorem 1.3]{tao2020sharp}). In fact, this would also be   enough for our method provided that the maps are of class $C^{\infty}$.\footnote{We have {\it a priori} $H^s$ regularity of the invariant set. This allows us to take the girth of the tubular neighborhood to be polynomially comparable to the diameter of the ball. Then we may use Taylor expansion to reduce the estimate to the polynomially curved version, provided that curves are of class $C^{\infty}$.}
However, to obtain the endpoint estimate for the $C^2$-curved version as in \cite{tao2020sharp}, it appears that a new idea is required. 
Fortunately, using the a priori $H^s$-regularity of the invariant set, we may allow a mild degeneration of the density gap interval as the scale shrinks.  This turns out to be enough to conclude the proof.

The result of \cite{guth2010endpoint} has been generalized in \cite{zhang2018endpoint}.  It would be interesting to explore the connection between Brascamp-Lieb type inequalities and an analogous result where we assume a higher codimension hyperbolicity gap.

\subsection{Notation}
  
  Given an integer $n > 0$, $z \in \R^n$ and $r > 0$, we denote by $B^n(z, r)$ the closed ball of radius $r$ centered at $z$ in $\R^n$.
  Given $x \in M$ and $r > 0$,  we denote by $B_{r}(x)$  the closed ball of radius $r$ centered at $x$.  
  
  Let $\delta_{inj}$ be the injectivity radius of $M$. In particular, for every $\delta \in (0, \delta_{inj})$ and every $x \in M$, the inverse of the exponential map $\exp_x : T_x M \to M$ is well-defined on $B_{\delta}(x)$.

Given a finite dimensional vector space $V$ and an integer $1 \leq m \leq \dim(V)$, we denote by $\Gr(V, m)$ the Grassmannian that parametrizes the set of $m$-dimensional subspaces of $V$.

\subsection*{Acknowledgements}
The authors thank Artur Avila and Timoth\'ee B\'enard for discussions related to Definition \ref{defn:main_hypotheses} and thank Hong Wang for discussions related to Multilinear Kakeya estimate.
The first author was supported by the National Science Foundation under Award
No.~DMS-2202967. The second author was supported by the National Science Foundation under award
No.~DMS-2246983.

\section{Codimension 1 gap}\label{sec:discussion_of_hypotheses}

In this section, we provide the details for several of the results stated in the introduction. In 
\S\ref{subsec:consequences_of_main_thm}, we prove the dynamical and statistical consequences of our main theorem. In \S \ref{SS1.4}, we show that codimension $1$ hyperbolicity gaps imply coexpansion on average. 

\subsection{Consequences of the main theorem}\label{subsec:consequences_of_main_thm}

In this subsection, we prove Theorem \ref{cor:spheres_corollary},  Corollary~\ref{CrSpGap}, and Lemma \ref{lem:codimension_1_hyperbolicity_gaps_implies_coexpansion}.
Theorem \ref{cor:spheres_corollary} follows quickly from Theorem \ref{thm:main_thm} and the result below.

\begin{proposition} 
\label{PrDKOdd}
 Given an integer $d \geq 2$, there is an integer $\reg_0 > 0$ depending on $d$ such that the following is true.
Let $(R_1,\ldots,R_m)$ be a tuple of isometries of $S^d$  that generates a dense subgroup of $\SO(d+1)$. Then there exists $\eps_0>0$ such that if $(f_1,\ldots,f_m)$ are volume preserving 
$C^\infty$ diffeomorphisms of $S^d$  satisfying
$d_{C^{\reg_0}} (f_j, R_j)\leq \eps_0$ for all $j$, 
then exactly one of the following holds:

\begin{enumerate}
\item[(1)] 
There exists $h\in \Diff^{\infty}(S^d)$ such that $hf_ih^{-1}\in \SO(d+1)$ for all $i$;
    \item[(2)]
    The uniform measure on $(f_1,\ldots,f_m)$ has codimension 1 hyperbolicity gap.
\end{enumerate}
Moreover for each $\delta>0$  we can choose $\eps_0$ so small that in case (1) 
$d_{\SO(d+1)}(h f_i h^{-1}, R_i ) < \delta$.  
\end{proposition}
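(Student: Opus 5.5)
The plan is to follow the perturbative scheme of \cite{dolgopyat2007simultaneous}. Fix the dense tuple $(R_1,\dots,R_m)$ and let $\mu$ be the uniform measure on $(f_1,\dots,f_m)$, with $f_j$ $\eps$-close to $R_j$ in $C^{\reg_0}$. The first step is the normal form / KAM dichotomy from \cite{dolgopyat2007simultaneous}. Using the spectral gap of the averaging operator of $(R_j)$ on $L^2(S^d)$ and on sections of tensor bundles (Bourgain--Gamburd type gaps are not needed; what is needed is the existence of a gap for each fixed Sobolev degree, which holds because the tuple generates a dense subgroup), one constructs conjugacies that remove the low-order terms of $f_j-R_j$. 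There are two outcomes. Either the iterative scheme converges, giving $h$ with $hf_ih^{-1}\in\SO(d+1)$ and $d(hf_ih^{-1},R_i)<\delta$ when $\eps_0$ is small; this is case (1). Or, after a conjugation, we obtain the expansion $f_j=R_j+\eps\,$(something) for which the second-order averaged "Lyapunov functional" is nondegenerate. That means a quadratic form $\Lambda$ in the perturbation, computed as an average over the rotation-invariant measure on $S^d$ and the Haar measure, is strictly positive.

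The second step is to convert that nondegeneracy into the codimension $1$ gap \eqref{D-1Gap}. For $N$ large (of order $\eps^{-2}$ in time, as in \cite{dolgopyat2007simultaneous}), write the derivative cocycle $D_xf^N_\omega$ as a product of near-isometries. Expanding $\log\con(D_xf^N_\omega|E)+\log|\det(D_xf^N_\omega|E)|$ to second order in $\eps$ gives a first-order term that averages to zero by volume preservation and equidistribution of the rotation random walk on $S^d$ and on the Grassmannian bundle. The second-order term, by the mixing of the isometric random walk on the frame bundle (again from density of the generated group), converges to $N\eps^2$ times an explicit constant. This constant depends only on $\Lambda$ and is uniform in $(x,E)$, because the limiting distribution of frames is Haar and hence independent of the starting pair. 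The point, as in \cite{dolgopyat2007simultaneous}, is that the Lyapunov spectrum of the averaged diffusion limit is an explicit affine function of two scalar quantities. One of them is proportional to $\Lambda>0$ and the other is forced by volume preservation, so the exponents are $\lambda_i=c\Lambda\,(d-2i+1)$ up to normalization. Then the codimension-one quantity is $\lambda_{d-1}$-like minus a sum that equals $-\lambda_d$. Concretely, $\lambda_{d-1}+\sum_{i\le d-1}\lambda_i=-\lambda_d+\lambda_{d-1}$, which is positive whenever $\Lambda>0$, including for odd $d$ where a zero exponent appears in the middle. This positivity in the diffusion limit, together with uniform error bounds $O(\eps^3N)$, gives \eqref{D-1Gap} for all $(x,E)$, which is case (2). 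Exclusivity is clear: in case (1) all exponents of the conjugated isometric system vanish, so the expectation in \eqref{D-1Gap} is uniformly bounded and cannot grow, contradicting a gap for all iterates.

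The main obstacle is the second step: getting uniformity in $(x,E)$ and controlling error terms in the small-$\eps$ diffusion approximation, given that $\log\con$ is only Lipschitz, not smooth, on the Grassmannian. I would first handle this by replacing $\con(\cdot|E)$ with the smooth quantity $\|\wedge^{d-1}\|$ combined with the determinant. The identity $\con(B|E)\,|\det(B|E)|^{-1}\cdot|\det B|_E|$ relates the conorm to the norm of $(\wedge^{d-2}B|_E)$. In the diffusion limit, the $\log$ of a norm of an exterior power along a typical vector converges to the partial exponent sum. I would then apply the large-deviation/averaging estimates of \cite{dolgopyat2007simultaneous}, which are already uniform over initial frames, to each exterior power.
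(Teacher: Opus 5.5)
Your plan is essentially the paper's argument. The dichotomy ($\lambda_1=0$, equivalently convergence of the KAM scheme, gives alternative (1) together with the closeness statement) is taken from \cite{dolgopyat2007simultaneous}. When $\lambda_1>0$ the paper simply quotes the uniform large-deviation bounds of \cite{dolgopyat2007simultaneous} (Corollary 4(a) and Lemma 4 with $r=d-1$): off a set of probability $O(\theta^N)$, $\frac1N\log\con(D_xf^N_\omega|E)>\frac{3-d}{d-1}(\lambda_1-\eps)$ and $\frac1N\log|\det(D_xf^N_\omega|E)|>\lambda_1-\eps$. Since both normalized logarithms are uniformly bounded below, this yields \eqref{D-1Gap} with rate about $\frac{2}{d-1}\lambda_1=\lambda_{d-1}-\lambda_d$, which is exactly your computation; your diffusion-limit rederivation and exterior-power reduction are therefore unnecessary.

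One correction to your first step: the input for the KAM scheme is not an $L^2$ spectral gap for $(R_j)$, which is open for general dense subgroups, as the paper notes after Corollary \ref{CrSpGap}. Nor is it a gap on each harmonic degree separately, which gives no control of the loss. The right input is the tame, polynomial-loss Diophantine estimate used in \cite{dolgopyat2007simultaneous}.
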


\begin{proof}
By \cite{dolgopyat2007simultaneous} if the top Lyapunov exponent of the volume $\lambda_1$ is equal to zero,
then alternative (1) holds. (See also \cite{dewitt2024simultaneous}.)
Therefore, we assume that $\lambda_1>0$.

By Corollary 4(a) and Lemma 4 in \cite{dolgopyat2007simultaneous} (applied with $r=d-1$)
for each $\eps>0$ there exists $0\le \theta<1$ such that for $N\ge 1$ with probability
$1-O(\theta^N)$,
\begin{equation}
\label{GapTerms}
    \frac{1}{N} \ln \con(D_xf^{N}_{\omega}|E)>\left(\frac{3-d}{d-1}\right) (\lambda_1-\eps)
    \quad \text{and}\quad
    \frac{1}{N} \log | \det( D_xf^{N}_{\omega}| E ) | >
    \lambda_1-\eps.
\end{equation}
Since both expressions from \eqref{GapTerms} are uniformly bounded from below, taking $\eps$
sufficiently small  we conclude that alternative (2) holds.
\end{proof}

\begin{proof}[Proof of Theorem \ref{cor:spheres_corollary}]
By hypothesis, we may take $R_1, \ldots, R_{m}$ in  $\supp(\mu_0)$ so that $(R_1, \ldots, R_m)$ generates a dense subgroup of $\SO(d+1)$. Let $\reg_0$ and $\eps_0$ be as in Proposition \ref{PrDKOdd} where $\eps_0$
is chosen so small that in case (1) $( hf_i h^{-1} )_{i = 1}^{m}$ generates a dense subgroup.
In this case, any tuple $(f_1, \ldots, f_m)$ satisfying $d_{C^{\reg_0}} (f_j, R_j)\leq \eps_0$
is ergodic. Indeed in case (1), the ergodicity holds since $\SO(d+1)$ acts transitively on the sphere while 
in case (2), it holds due to our Main Theorem \ref{thm:main_thm}.

For each $i$, we let $\cV_i$ be the set of $f_i \in \Diff_{\vol}^{\infty}(S^d)$ with $d_{C^{\reg_0}} (f_i, R_i) < \eps_0$.
Then there is a neighborhood $\cU$ of $\mu_0$ in the weak-$*$ topology on probability measures on $\Diff_{\vol}^{\infty}(S^d)$ such that for every $\mu \in \cU$, we have $\mu(\mathcal{V}_i) > 0$ for all $i$. Then each $\mu\in\mathcal{U}$
is ergodic. Otherwise there would be a set $E$ of intermediate volume which is essentially invariant by
$\mu$-almost every $f$. Since $\mu(\mathcal{V}_i) > 0$ there would be a tuple
$(f_1, \ldots, f_m)$ such that $f_j\in \mathcal{V}_j$ and $f_j$ preserves $E$. But this contradicts
the ergodicity of $(f_1, \ldots, f_m)$ established above. This completes the proof.
 \end{proof}

\begin{proof}[Proof of Corollary \ref{CrSpGap}]
We divide the proof into four steps. After creating the neighborhoods in the statement of the corollary, we will use $\mu$ to define an auxiliary measure $\mc{P}_{\mu}$ on $m+1$ tuples of diffeomorphisms such that any tuple in the support of $\mc{P}_{\mu}$ either has spectral gap or preserves a common Riemannian metric.
Throughout the proof we shall abbreviate $\bd=d_{C^{\reg_0}}$.
\smallskip

{\bf Step I.} If alternative (2) in Proposition \ref{PrDKOdd} holds, then the generator
$\phi\mapsto \mf{L} \phi\coloneqq \frac{1}{m} \sum (\phi\circ f_j)$ has a spectral gap on $L^2.$

\smallskip

Indeed, applying Proposition \ref{PrDKOdd} to both $(R_1, \ldots, R_m)$ and $(R_1^{-1}, \ldots, R_m^{-1})$
we see that the uniform measures on both tuples are coexpanding on average, so 
\cite[Theorem 7.1]{dewitt2025conservative}
shows that $\mf{L}$ has essential spectral gap on $L^2.$ 
On the other hand, if $\mu$ is coexpanding on average then so 
is $\mu^{*n}$ for all $n$ (see the proof of Lemma \ref{lem MomentBound} in the appendix)
so Theorem~\ref{thm:main_thm} shows that our random system
is totally ergodic. Now the spectral gap follows from \cite[\S 7.4]{dewitt2025conservative}.
\medskip

{\bf Step II.} Construction of the $C^{\infty}$ neighborhood $\mc{O}$, the weak* neighborhood $\mc{U}$, and the auxiliary measure $\mc{P}_{\mu}$. 

\smallskip

As in the proof of Theorem \ref{cor:spheres_corollary} above, fix some $R_1, \dots , R_m$ in $\supp(\mu_0)$ that  generate a dense subgroup of $\SO(d+1)$.
Applying Step I to the tuple $(R_1, \ldots , R_m, R)$, we conclude that for each $R\in \SO(d+1)$
there exists $\eps_R>0$ such that if 
$(f_1, \ldots ,f_m, f_{m+1})$ satisfy $\bd(f_j, R_j)\le\eps_R$ for $j=1, \ldots, m$
and $\bd (f_{m+1}, R)\le\eps_R$,  then the alternative of Proposition \ref{PrDKOdd} holds.
By compactness, there exists a finite collection $(\tilde R_p)_{p=1}^k$ of rotations such that the open balls
$B(\tilde R_p, \eps_{\tilde R_p})$ cover $\operatorname{SO}(d+1)$. 
Let $$\DS \mathcal{O}=\bigcup_{p=1}^k \{f| \bd(f, \tilde R_p)< \eps_{\tilde R_p}\}, \quad
\mathcal{V}_j=\{f| \bd(f, R_j)\leq \min_p \eps_{\tilde R_p}\} , \quad
\mathcal{U}=\{\mu| \mu(\mathcal{V}_j)>0 \text{ for } j=1, \ldots , m\}.$$ 
We will show that the conclusion of the corollary holds with $\mathcal{O}$ and $\mathcal{U}$
defined above. Let $\mu$ be such a measure, let $\mu_j$ denote the measure $\mu$ conditioned on $\mathcal{V}_j$, and consider the measure on 
$G^{m+1}$ of the form
$\mathcal{P}_{\mu}=\mu_1\times \cdots \times \mu_m\times \mu$. 
By the foregoing discussion $\mathcal{P}_{\mu}\left(\mathbb{I}\bigcup\mathbb{G}\right)=1$
where $\mathbb{I}$ is the set of tuples $(f_1, \ldots , f_m, f)$ preserving a  Riemannian metric
on $S^d$ and $\mathbb{G}$ is the set of tuples $(f_1,\ldots,f_{m},f)$ that have
dynamics with  coexpansion and $L^2$ spectral gap.
\medskip

{\bf Step III.} If $\mathcal{P}_{\mu}(\mathbb{I})\! = \! 1$, then there exists a metric $\mathfrak{g}$ on $S^d$
that is preserved by $\mu$-almost every $f.$ 

\smallskip

Indeed, if $\mathcal{P}_{\mu}(\mathbb{I})\!\!=\!\! 1$, then by Fubini there exists 
$(f_1, \ldots, f_m)$ with $\bd(f_j, R_j)\leq \eps_0$ such that 
for $\mu$-almost every $f$ the tuple $(f_1, \ldots, f_m, f)$ preserves some unit volume  metric $\mathfrak{g}_f$
on $S^d$. 
Since by our choice of $\eps_0$, $(f_1, \ldots  , f_m)$ generate a dense subgroup of Isom$(\mathfrak{g}_f)$,
$\mathfrak{g}_f$ is the unique unit volume  metric that is preserved by $(f_1, \ldots , f_m)$. It follows that 
$\mathfrak{g}_f$ does not in fact depend on $f$ and so the first alternative of the corollary holds.

\medskip

{\bf Step IV.} If $\mathcal{P}_{\mu}(\mathbb{G})>0$, then the generator $\mc{L}_{\mu}$ has spectral gap on $L^2$. 

\smallskip

Indeed, in this case there are distinct diffeomorphisms $\cF=\{f_1, \ldots, f_{m+1}\}\subset \supp(\mu)$ such that
the uniform dynamics on $\cF$ has a spectral gap. Applying Theorem 7.6 from 
\cite{dewitt2025conservative}, we conclude that there is $\eps_1>0$ such that if $\tilde\mu$
is any measure with 
\begin{equation}
\label{UniBlob}
  \tilde\mu\{f| \bd(f, f_j)<\eps_1\}=\frac{1}{m+1},  
\end{equation}
then the generator $\cL_{\tilde\mu}$ has a spectral gap on $L^2.$ Since $f_j$ are in $\supp(\mu)$
it follows that $\mu$ can be decomposed as $p\tilde\mu+(1-p)\nu$ where $p > 0$ and $\tilde \mu$ satisfies
\eqref{UniBlob}.
Accordingly there exists $N\in \mathbb{N}$ and $\theta>0$ such that for such $\wt{\mu}$, 
$\|\cL_{\widetilde{\mu}}^N\|_{L^2_0}\leq 1-\theta$ where $L^2_0$ stands for the mean-zero subspace of $L^2$.
Thus
$\cL_\mu^N=p^N\cL_{\widetilde{\mu}}^N+(1-p^N)\cL_{\nu_N}$ where $\nu_N$ describes the contribution
of remaining terms. Since the composition with a volume preserving diffeomorphism is an isometry on $L^2$,
$\|\cL_{\nu_N}\|_{L^2}\leq 1$. Step IV follows.

\medskip

The dichotomy claimed in the corollary follows from Steps III and IV above.
The other statements follow from the spectral gap on $L^2$, 
see  \cite[\S7.3 and \S7.8]{dewitt2025conservative} for details.
\end{proof}

\subsection{Codimension one hyperbolicity gaps imply coexpansion}
\label{SS1.4}

\begin{proof}[Proof of Lemma \ref{lem:codimension_1_hyperbolicity_gaps_implies_coexpansion}]
	
	From \cite[Prop.\ 3.11]{dewitt2025conservative}, coexpanding on average is the same thing as being expanding on average on $(d-1)$-planes, so this is what we will check. 
	
	Suppose that $N$ is as in the codimension $1$ hyperbolicity gaps condition. Then
	\[
	\mathbb{E}\!\!\left[\log\con (Df^N_\omega| E)\right]
	+
	\mathbb{E}\!\!\left[\log | \det (D_x f_{\omega}^N | E )|
     \right] > 0.
	\]
	We also have the trivial bound
	$\DS
		\log  | \det (D_x f_{\omega}^N | E ) | \ge (d-1) \log \con (D_xf^N_\omega|E)
	$. Hence,
	$$
	d	\log  | \det (D_x f_{\omega}^N | E ) |
	\ge (d-1)	\left[ \log  | \det (D_x f_{\omega}^N | E ) | +  \log \con (Df^N_\omega| E) \right].
	$$
	Taking the expectation on both sides, and using the codimension-1 gaps, we  conclude that $\mu$ is coexpanding on average.
\end{proof}

\section{Strong stable manifolds and laminations}\label{sec:strong_stables}

The starting point of our discussion is Oseledec's Theorem for random dynamics, 
which we now recall.
 Given a Borel probability measure $\mu$ compactly supported on $G = \Diff^{\infty}_{\vol}(M)$,  the shift map $T \colon  \Omega \to \Omega$, $T (\omega_n)_{n \geq 1} = (\omega_{n+1})_{n \geq 1}$ preserves $\PP = \mu^{\otimes \N}$; and the skew product
 $$
 F : \Omega \times M \to \Omega \times M  \quad\quad
 (\omega, x) \mapsto (T(\omega), f^1_{\omega}(x))
 $$
 preserves $\PP \otimes \vol$.
The following proposition is from  \cite[Chapter I, Theorem~3.2]{Liu1995smooth}.
 \begin{proposition} \label{prop randomMET}
There is a function $r
\colon M \to \{1, \ldots, d\}$ and 
 a Borel subset $\Lambda_0 \subset \Omega \times M$ with $\PP \otimes \vol(\Lambda_0) = 1$ such that $F(\Lambda_0) = \Lambda_0$, and   for every $(\omega, x) \in \Lambda_0$ there exist a sequence of linear subspaces of $T_x M$
\aryst
\{0\} = V^{(0)}(\omega, x) \subsetneq V^{(1)}(\omega, x) \subsetneq \cdots \subsetneq V^{(r(x))}(\omega, x) = T_x M,
\earyst
with $\dim V^{(i)}(\omega, x)$ depending only on $x$, $i$, 
 and numbers 
\aryst
\lambda^{(1)}(x)   < \cdots < \lambda^{(r(x))}(x)
\earyst
such that for all $\xi \in V^{(i)}(\omega, x) \setminus V^{(i-1)}(\omega, x)$, $1 \leq i \leq r(x)$
\aryst
\lim_{n \to \infty} \frac{1}{n} \log \|  D_x f_{\omega}^{n}(\xi) \| = \lambda^{(i)}(x).
\earyst
 
 Moreover, $r(x),  \lambda^{(i)}(x), V^{(i)}(\omega, x)$ depend measurably on $(\omega, x) \in \Lambda_0$, and for every $(\omega, x) \in \Lambda_0$ and $1 \leq i \leq r(x)$,
$$
r( f^{1}_{\omega}(x) ) = r(x), \  \lambda^{(i)}( f^{1}_{\omega}(x) ) = \lambda^{(i)}(x), \
D_x f^{1}_{\omega}( V^{(i)}(\omega, x) ) = V^{(i)}( F(\omega,  x) ).
$$
 \end{proposition}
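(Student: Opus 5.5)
This is the random multiplicative ergodic theorem for the skew product $F$, with the quantities depending only on $x$ because the base $(\Omega,\PP)$ is a Bernoulli shift. I will derive it from the classical Oseledec filtration theorem applied fiberwise to the derivative cocycle, and then use the IID structure to remove the dependence of the exponents on $\omega$.

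\textbf{Step 1 (the cocycle and Oseledec).} Consider the linear cocycle $A(\omega,x)=D_x f^1_\omega\colon T_xM\to T_{f^1_\omega x}M$ over $F$. Since $\mu$ is compactly supported in $\Diff^\infty_{\vol}(M)$, the quantities $\|D f\|$ and $\|Df^{-1}\|$ are uniformly bounded on $\supp\mu$. Hence $\log^+\|A\|$ and $\log^+\|A^{-1}\|$ are bounded, and in particular integrable for $\PP\otimes\vol$. After measurably trivializing $TM$ (for instance, by embedding $M$ in Euclidean space or using a measurable frame), the one-sided Oseledec theorem for invertible cocycles over the non-invertible measure preserving map $F$ applies. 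It yields an $F$-invariant full measure set $\Lambda_1$, and on it a measurable filtration $V^{(i)}(\omega,x)$, measurable exponents $\lambda^{(i)}(\omega,x)$ and a measurable $r(\omega,x)$. These satisfy the stated limits and the equivariance $A(\omega,x)V^{(i)}(\omega,x)=V^{(i)}(F(\omega,x))$. The exponents, $r$ and the dimensions are $F$-invariant functions.

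\textbf{Step 2 (independence from $\omega$).} This is the main point. The exponents and dimensions are $F$-invariant functions on $\Omega\times M$; call such a function $\phi(\omega,x)$. I would show $\phi$ depends only on $x$ almost surely. The key observation is that the limits and the filtration at $(\omega,x)$ are determined by the future $\omega_1,\omega_2,\dots$, and the invariance $\phi(\omega,x)=\phi(T\omega,f_{\omega_1}x)$ lets us shift the time window. Write $\psi(x)=\int\phi(\omega,x)\,d\PP(\omega)$. By invariance,
\[
\phi(\omega,x)=\phi(T^n\omega,f^n_\omega x),
\]
and the right-hand side, given $\omega_1,\dots,\omega_n$, has conditional expectation $\psi(f^n_\omega x)$ because $T^n\omega$ is independent of the first $n$ coordinates. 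Thus $\psi(f^n_\omega x)=\EV[\phi\mid\omega_1,\dots,\omega_n]$. By the martingale convergence theorem this converges to $\phi(\omega,x)$ almost surely. It is also equal to $\EV[\psi(f^n_\omega x)]$ integrated suitably, which leads to the standard conclusion (Kakutani's random ergodic argument, as in the Kakutani reference above) that $\phi(\omega,x)=\psi(x)$ almost surely. A cleaner route is to apply the $L^2$ identity $\EV\int|\phi-\psi(f^n_\omega x)|^2\to0$ together with stationarity of $\vol$, which forces $\psi\circ f=\psi$ for $\mu$-almost every $f$, and hence $\phi=\psi$. I expect this step to be the main obstacle, since the bookkeeping between invariance under $F$ and $\mu$-almost sure invariance needs care for boundedness; the integer-valued and bounded exponents make it manageable.

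\textbf{Step 3 (choosing $\Lambda_0$).} Redefine $r(x)$, $\lambda^{(i)}(x)$ and the dimensions by the functions $\psi$ obtained in Step 2. Set $\Lambda_0=\bigcap_{n\ge0}F^{-n}(\Lambda_2)$, where $\Lambda_2$ is the full measure set on which Steps 1 and 2 agree and all $f^n_\omega x$ also satisfy the agreement. This set is full measure and satisfies $F^{-1}\Lambda_0\supset\Lambda_0$. To get $F(\Lambda_0)=\Lambda_0$, intersect further with the forward images, using that $F$ is measure preserving and the fibers are invertible. The equivariance relations then hold pointwise on $\Lambda_0$ by construction, with measurability inherited from Step 1.
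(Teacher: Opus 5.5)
Your argument is correct and is the standard proof of the result, which the paper does not prove but simply quotes from Liu--Qian (Ch.~I, Thm.~3.2): apply Oseledets' filtration theorem to the derivative cocycle over the skew product $F$, then use that $F$-invariant functions over the one-sided Bernoulli shift are a.e.\ functions of $x$ alone, which your $L^2$ martingale argument in Step~2 (with invariance of $\vol$ forcing $\psi\circ f=\psi$) establishes soundly. The one loosely handled point is the exact equality $F(\Lambda_0)=\Lambda_0$ in Step~3: intersecting with forward images does not by itself give $\Lambda_0\subseteq F(\Lambda_0)$ (or Borel measurability), but this is easily arranged by adjoining to a forward-invariant good set the points obtained by prepending the identity map to $\omega$, which are $F$-preimages at which all the required properties persist.
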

 
We fix some $\mu \in \cG$ for the rest of this paper. 

\begin{definition}\label{defn:subtempered_splitting}
Given $(\omega, x) \in \Lambda_0$,
we say that $(D_xf^n_
{\omega})_{n\ge 0}$ has a $(\fC,\kappa,\epsilon)$\emph{-codimension $1$ subtempered dominated splitting} if there exists a pair of subspaces $V\oplus W=T_xM$ with $\dim V=1$ such that 
 denoting $W_k=D_x f_\omega^k (W)$, $V_k=D_x f^k_\omega( V)$, $k \in \N$, we have
\begin{equation}\label{eqn:subtempered_splitting_norm}
  \|D_{f^k_{\omega}(x)}f^{k,j}_{\omega}|V_k\|\le e^\fC e^{-\kappa j}e^{k\epsilon}\con(D_{f^k_{\omega}(x)}f_\omega^{k,j}|W_k),
\end{equation}
and
\begin{equation}\label{eqn:subtempered_splitting_angle}
\angle({W_k, V_k})\ge e^{-\fC}e^{-\epsilon k}.
\end{equation}
We additionally say that this trajectory has $(\fC',\kappa',\epsilon')$\emph{-strong stable contraction} if 
\begin{equation} \label{eq itm3}
\|D_{f_{\omega}^k(x)}f_{\omega}^{k,j}\vert  V_k \|\le e^{-\kappa' j+\epsilon'k+\fC'}.
\end{equation}
If both notions hold with $\fC=\fC'$, $\epsilon=\epsilon'$, we say that $(D_xf^n_{\omega})_{n\ge 0}$ has a $(\fC,\kappa,\kappa',\epsilon)$\emph{-codimension $1$ strong stable splitting}.
\end{definition}

The following is proved in Appendix \ref{app:construction_of_splitting}.

\begin{proposition}\label{prop:tail_on_splitting}
Given $\mu \in \cG$ 
there exist $\kappa,\kappa' > 0$ such that for all $\epsilon>0$, there exist $R,\eta > 0$ such that for $\vol$-a.e. $x\in M$ and $\fC>0$,\;\;
$\DS
\mathbb{P}(\mathcal{A})\!
\ge\! 1\!-\! R e^{-\eta \fC}
$
 where $\mathcal{A}$ is the event that $( D_xf^n_{\omega} )_{n \geq 0}$
admits $(\fC,\kappa,\kappa',\epsilon)$-codimension 1 strong stable splitting with $V=V^{(1)}(\omega, x)$ (see Proposition \ref{prop randomMET}).
\end{proposition}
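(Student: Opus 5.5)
Two mechanisms drive the argument: an exponential large-deviation bound for additive cocycles with uniform positive drift, and the fact that a subtempered deviation has an exponentially small tail. The cocycles come from two projectivized skew products. The first acts on codimension-one subspaces, $(\omega,x,E)\mapsto(T\omega, f^1_\omega x, D_xf^1_\omega E)$. The second acts on lines. By the compact support of $\mu$, all one-step log-derivatives are uniformly bounded. This is what makes the Azuma/Hoeffding and Chernoff arguments below legitimate.

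\textbf{Step 1: uniform large deviations.} Set
\[
\Phi_N(x,E,\omega)=\log\con(D_xf^N_\omega|E)+\log|\det(D_xf^N_\omega|E)|.
\]
By \eqref{D-1Gap} and compactness of the Grassmann bundle, $\mathbb{E}\Phi_N\ge 2N\kappa_0$ uniformly in $(x,E)$. Since the conorm is supermultiplicative and the determinant is multiplicative, $\Phi$ is superadditive along orbits of the Grassmann skew product. Summing over blocks of length $N$ and applying an Azuma-type martingale bound gives, uniformly in $(x,E)$,
\[
\mathbb{P}\big(\Phi_{nN}(x,E,\omega)<\kappa_0 nN\big)\le Ce^{-cn}.
\]
By Lemma \ref{lem:codimension_1_hyperbolicity_gaps_implies_coexpansion}, the analogous bound holds for $\log|\det(D f^n|E)|$. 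Since $\mathrm{vol}$ is preserved, $\log|\det(D f^n|E)|$ equals minus the log of the normal component, so we also get an exponential tail for the event that the normal direction fails to contract at a uniform rate. The same argument run at the times $k$ and $k+j$ handles the pieces $f^{k,j}_\omega$. The randomness is then confined to the base point $f^k_\omega x$ and to the pushed subspace. Because the bound is uniform over the Grassmannian, this causes no problem. A union bound over all pairs $(k,j)$ with summable weights $e^{-c(\epsilon k+j)}$ produces a single random constant $\fC(\omega)$ whose tail is $\le Re^{-\eta\fC}$. This is the standard ``tempered constant'' construction.

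\textbf{Step 2: building the splitting.} We take $V=V^{(1)}(\omega,x)$, the slowest Oseledets direction; $\dim V^{(1)}=1$ will follow from the gap. For $W$ we need an invariant complement. The natural choice is $W=(V^{(1)}_*)^{\perp}$, where $V^{(1)}_*$ is the top Oseledets direction of the adjoint inverse cocycle on $T^*M$. Equivalently, $W$ is the $E$ that maximizes growth, obtained as the limit of $(D f^n_\omega)^{-1}$ applied to the codimension-one space orthogonal to the top singular vector of $(Df^n_\omega)^*$. With this choice, \eqref{eq itm3} with rate $\kappa'$ comes from the normal-contraction deviation bound in Step 1. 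For the domination \eqref{eqn:subtempered_splitting_norm}, combine $\con(Df|W_k)$ with $\|Df|V_k\|$. Along $W$, $\Phi$ controls the conorm relative to the normal factor, and $V_k$ makes an angle with $W_k$ that is bounded below. So $\|Df|V_k\|$ is comparable to the normal factor divided by the angle. The domination then follows from the positive drift of $\Phi$, at the cost of angle losses.

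\textbf{Step 3: the angle estimate, which I expect to be the main obstacle.} The difficulty in \eqref{eqn:subtempered_splitting_angle} is that both $W_k$ and $V_k$ depend on the whole future and past. I would follow the standard ``future subspace convergence'' argument. Let $W^{(n)}$ be the finite-time approximations. Step 1, applied to $(f^{k,n})$, shows that $W_k^{(n)}$ converges exponentially fast, with an exceptional probability that is exponentially small in $n$ and $\epsilon k$. The same argument applies to $V_k$ as the most contracted direction. The angle is then bounded below by comparing the actual growth of a vector in $V_k$ with its projection onto $W_k$. A large component along $W_k$ would force growth at rate at least $\con(Df|W_k)$, which contradicts the bound from Step 2. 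This yields $\angle\ge e^{-\fC-\epsilon k}$ outside an event of probability $\le Re^{-\eta\fC}$. Two points need care here: making the union bounds over $k$ summable, and making every constant uniform in $x$. Both are handled by the uniformity over $(x,E)$ established in Step 1.
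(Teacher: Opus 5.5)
There is a genuine gap, and it is at the step you flagged: the lower bound \eqref{eqn:subtempered_splitting_angle} on $\theta_k=\angle(V_k,W_k)$.

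Your mechanism is that a large component of $V_k$ along $W_k$ would force growth at least $\con(Df^{k,j}_\omega|W_k)$. This fails. Write a unit vector of $V_k$ as $w+tu$, with $w\in W_k$ and $u\perp W_k$ a unit vector. Then $Df^{k,j}_\omega(w+tu)=Df^{k,j}_\omega w+t\,Df^{k,j}_\omega u$. The $W_{k+j}$-component of the second term (the off-diagonal block) can be as large as $t\|Df^{k,j}_\omega\|$. It can therefore cancel the first term once $t\|Df^{k,j}_\omega\|\gtrsim\con(Df^{k,j}_\omega|W_k)$, and the slow direction is precisely a vector realizing this cancellation for every $j$. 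So comparing growth against the single subspace $W_k$ only gives $V_k\not\subset W_k$, with no quantitative bound on $t$. Exponential convergence of finite-time approximations of $V_k$ and $W_k$ does not supply one either.

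How small the angle is, is a distributional question. The missing ingredient is a non-concentration estimate for the strong stable direction, uniform in the base point and in the hyperplane:
\[
\PP\big(\angle(E^{ss}(\omega,x),W)<\rho\big)\le C'\rho^{\alpha}\quad\text{for every } W\in\Gr(T_xM,d-1)
\]
(Lemma \ref{lem:ahlfors_Ess}). Apply it at time $k$ conditionally on $\cF_k$: $(x_k,W_k)$ is frozen and $V_k=E^{ss}(T^k\omega,x_k)$ is independent of it. This gives $\PP(\theta_k<e^{-C-k\fe})\le C'e^{-\alpha(C+k\fe)}$, which is summable in $k$ and has the required exponential tail in $\fC$.

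Proving that estimate does use a growth comparison, but in a specific form. It must hold uniformly over the whole neighbourhood $\{\graph(\cL):\|\cL\|\le c_3\rho\}$ of a fixed $W$. It must also be run at a time scale $n_1\asymp|\log\rho|$, chosen so that $\rho$ beats the $e^{O(n_1)}$ size of the off-diagonal blocks in the graph transform. The fractional-moment form of your Step 1 then shows the following, for some $c_3,c_4,\kappa_2>0$: outside an event of probability at most $\rho^{\kappa_2}$, every unit vector in every such hyperplane eventually grows faster than $\con(Df^n_\omega)e^{c_4n}$. Hence $E^{ss}$ avoids the entire neighbourhood.

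The same point undermines your Step 2. Your $W$ is built from the future of $\omega$. (In this one-sided setting, the limits you describe give either members of the Oseledets filtration, which contain $V^{(1)}$, or subspaces such as $(V^{(1)})^\perp$, which are not equivariant.) With such a $W$, the subspace $W_k$ is correlated with the increments of $f^{k,j}_\omega$. A deviation bound that is uniform over the Grassmannian then says nothing about $\Phi$ evaluated at the random subspace $W_k(\omega)$. The conditioning needed for the angle is lost too.

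The definition only asks for some complement of $V$. The repair, which is what the paper does, is to take $W$ to be an arbitrary \emph{fixed} hyperplane of $T_xM$, so that $(x_k,W_k)$ is $\cF_k$-measurable. With that choice:
\begin{itemize}
\item Your Step 1 (equivalently, the fractional moment bound plus Markov's inequality) gives \eqref{eq condition2}, since by volume preservation the ratio there equals $e^{-\Phi}$.
\item The non-concentration lemma gives \eqref{eq condition1}.
\item Your comparison of $\|Df|V_k\|$ with the normal factor divided by the angle, made precise by $\sin\theta_{k+j}\|Df^{k,j}_\omega v_k\|=\sin\theta_k\|P_{W_{k+j}^\perp}Df^{k,j}_\omega u_k\|$, then yields both the domination and the contraction along $V$.
\end{itemize}
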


By Proposition \ref{prop:tail_on_splitting},  $\dim V^{(1)}(\omega, x) = 1$ in Proposition \ref{prop randomMET}
for $\vol$-a.e.~$x \in M$ and $\PP$-a.e.~$\omega$. In the following, for a.e.~$(\omega, x) \in \Lambda_0$ we denote 
by $\DS
E^{ss}(\omega, x) = V^{(1)}(\omega, x)
$,
 the $1$-dimensional subspace of $T_xM$ associated to the Lyapunov exponent $\lambda^{(1)}(x)$, and refer to $E^{ss}(\omega, x)$ as the \emph{$\omega$-strong stable direction at $x$}.
 
 In the rest of the paper, we fix $\kappa, \kappa'$ given by Proposition \ref{prop:tail_on_splitting}.
  Fix $\bar\epsilon < \min(\kappa, \kappa', \epsilon_0) /  10$ where
 $\epsilon_0>0$ comes 
 from Proposition \ref{prop SMT} below.
 Given $\omega \in \Omega$ and $\fC > 0$, we denote by $\cR(\omega, \fC)$ the set of $x \in M$ such that $(D_x f_{\omega}^n)_{n \geq 0}$ has a 
 $(\fC,\kappa,\kappa',  {\bar\epsilon})$-codimension $1$ strong stable splitting. The set $\cR(\omega, \fC)$  can be  thought of as a  {\it Pesin set} for the itinerary $\omega$.

Now we summarize the properties of stable manifolds and laminations needed for this work. The next result follows from  \cite[Chapter III, Theorem 3.2]{Liu1995smooth}.   

\begin{proposition}
By replacing $\Lambda_0$ in Proposition \ref{prop randomMET} by a smaller conull set if necessary,  for every $(\omega, x) \in \Lambda_0$, the set 
	\aryst
	W^{ss}(\omega, x) = \{ y \in M \mid  \limsup_{n \to \infty} \frac{1}{n} \log d(f_{\omega}^n(x), f_{\omega}^n(y)) \leq \lambda^{(1)}(x) \}
	\earyst
	is the image of $E^{ss}(\omega, x)$ under an injective $C^\infty$ immersion satisfying  $T_x W^{ss}(\omega, x)  =  E^{ss}(\omega, x)$.  
  \end{proposition}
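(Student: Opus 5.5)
This statement is essentially a specialization of the random stable manifold theorem in \cite[Chapter III, Theorem 3.2]{Liu1995smooth}. The proof therefore amounts to checking that theorem's hypotheses and identifying its output with $W^{ss}(\omega,x)$.

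First, the integrability hypothesis of the random Pesin theory. Liu's setting requires $\log^+\|f\|_{C^2}$ and $\log^+\|f^{-1}\|_{C^2}$ to be $\mu$-integrable. Since $\mu$ is compactly supported in $\Diff^\infty_{\vol}(M)$, these norms are uniformly bounded on $\supp\mu$, so the hypothesis holds, and likewise for every $C^r$ norm.

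Second, the spectral input. On the conull set given by Proposition \ref{prop randomMET}, the bottom Lyapunov exponent is $\lambda^{(1)}(x)$ with Oseledets subspace $V^{(1)}(\omega,x)=E^{ss}(\omega,x)$. Proposition \ref{prop:tail_on_splitting} gives $\dim E^{ss}=1$ almost surely. By \eqref{eq itm3}, $\lambda^{(1)}(x)\le-\kappa'<0$, and by \eqref{eqn:subtempered_splitting_norm} there is a gap $\lambda^{(1)}(x)<\lambda^{(2)}(x)$. So $\lambda^{(1)}$ is an isolated negative exponent. Liu's theorem applied to the $\lambda^{(1)}$-stable set then shows the following. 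For a.e.\ $(\omega,x)$, the local set $W^{ss}_{loc}(\omega,x)$ is the graph of a $C^{r}$ map from a neighborhood of $0$ in $E^{ss}(\omega,x)$ to a complement. It is tangent to $E^{ss}$ at $x$. It is characterized by exponential convergence at rate at most $\lambda^{(1)}+\epsilon$, for every small $\epsilon>0$.

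Third, passing from local to global and to $C^\infty$. Set $W^{ss}(\omega,x)=\bigcup_n (f^n_\omega)^{-1}W^{ss}_{loc}(F^n(\omega,x))$. Each term is an injectively immersed $C^r$ curve, since each $f_{\omega_i}$ is a diffeomorphism. The union is increasing, and the pieces fit together because the local manifolds are characterized dynamically. Hence it is the image of the line $E^{ss}(\omega,x)$ under an injective $C^r$ immersion. Equality with the $\limsup$-defined set uses two facts. Any point whose orbit approaches at rate $\le\lambda^{(1)}$ must eventually lie in the local manifold at time $n$; this is the characterization part of the theorem. Rate $\lambda^{(1)}$ versus $\lambda^{(1)}+\epsilon$ is handled by intersecting over rational $\epsilon$, using the gap to $\lambda^{(2)}$.

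To get $C^\infty$, I would run the theorem for each $r$ and intersect the countably many conull sets. I would also intersect with $F$-invariant sets, replacing $\Lambda_0$ by $\bigcap_{n\in\mathbb{Z}_{\ge0}}F^{-n}(\cdot)$, which stays conull by invariance of $\PP\otimes\vol$. The $C^r$ curves obtained for different $r$ coincide as sets, by the dynamical characterization.

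I expect the main obstacle to be the exact characterization of $W^{ss}$ by $\limsup\le\lambda^{(1)}$, rather than by an $\epsilon$-slack, together with making the conull set uniform in $r$. Liu's statement gives $\le\lambda^{(1)}+\epsilon$ type sets; the equality should follow from the spectral gap and monotonicity in $\epsilon$.
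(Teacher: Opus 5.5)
Your route is the same as the paper's. Both read the statement off \cite[Chapter III, Theorem 3.2]{Liu1995smooth} with $i=1$, after using Proposition~\ref{prop:tail_on_splitting} to get $\dim V^{(1)}=1$ and $\lambda^{(1)}(x)\le-\kappa'<0$ almost everywhere. That last point is a good one, and the paper leaves it implicit.

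\textbf{The gap is in the $C^\infty$ step.} Liu--Qian work throughout with $C^2$ random diffeomorphisms; their integrability hypothesis is on $\log^+\|f\|_{C^2}$. Their Theorem III.3.2 gives injective immersions of class $C^{1,1}$. There is no $C^r$ version of that theorem to ``run for each $r$''. So your claim that it yields a $C^r$ graph, and with it the intersection over $r$, has no source behind it.

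\textbf{What is missing.} You need a separate regularity statement: strong stable manifolds are as smooth as the maps. The paper takes this from \cite[Thm.~7.3.19]{arnold1998random} or \cite[Thm.~1]{barreira2008regularity}. No bunching hypothesis is needed because $W^{ss}$ belongs to the bottom exponent. Write $A$ for the derivative along $E^{ss}$ and $B$ for the derivative on the complement. Linearized invariance of the graphs reads $\phi_x\approx B^{-1}\circ\phi_{f(x)}\circ A$. Hence the $k$-th derivative of $\phi$ is contracted by roughly $e^{k\lambda^{(1)}-\lambda^{(2)}}$. Since $\lambda^{(1)}<0$, we have $k\lambda^{(1)}\le\lambda^{(1)}<\lambda^{(2)}$ for every $k\ge1$.

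Once you insert such a $C^r$ theorem, your intersection over $r$ works. The sets coincide by the dynamical characterization, and the arclength reparametrization gives a single $C^\infty$ immersion.

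\textbf{Two smaller remarks.}
\begin{enumerate}
\item What you flag as the main obstacle is not one. Liu's Theorem III.3.2 is already stated for the global set defined by $\limsup\le\lambda^{(i)}(x)$, not one with an $\epsilon$-slack. Your local-to-global step therefore re-proves part of what you cite.
\item The gap $\lambda^{(1)}<\lambda^{(2)}$ does not need \eqref{eqn:subtempered_splitting_norm}. It is automatic because the Oseledets exponents in the filtration are distinct. What Proposition~\ref{prop:tail_on_splitting} actually supplies is one-dimensionality of $V^{(1)}$ and negativity of $\lambda^{(1)}$.
\end{enumerate}
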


  It is a standard fact that the strong stable manifolds are as regular as the dynamics,  see for example, \cite[Thm.\ 7.3.19]{arnold1998random} or \cite[Thm.\ 1]{barreira2008regularity}.

In particular, $W^{ss}(\omega, x)$ is defined for $\PP \otimes \vol$-a.e.\ $(\omega, x)$. We will refer to $W^{ss}(\omega, x)$ as the \emph{$\omega$-strong stable manifold at $x$}.  
We denote by $m_x^s$ the leafwise Lebesgue measure on $W^{ss}(\omega, x)$ (we will suppress $\omega$ from the notation as $\omega$ is often clear from the context).
We will also use the following notation: Let $U$ be an open set, we denote by $(W^{ss}(\omega, x) \cap U)_{x}$ the connected component of $W^{ss}(\omega, x) \cap U$ containing $x$.

We are interested in strong stable manifolds because every $\mu$-almost surely invariant set  is  almost saturated by strong stable manifolds. 

\begin{theorem} \label{thm ergodiccomponent}
	Let $A$ be a $\mu$-almost surely invariant subset of $M$ with $\vol(A) > 0$. Then  for $\vol$-a.e. $x \in A$, for $\PP$-a.e. $\omega$,  $m^s_{x}$-almost every $y \in W^{ss}(\omega,x )$ belongs to $A$.
\end{theorem}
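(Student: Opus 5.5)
The plan is a Hopf-type argument along forward trajectories. Strong stable manifolds are defined by forward contraction, so no inverse dynamics is needed. Let $A$ be $\mu$-almost surely invariant with $\vol(A)>0$. First I would reduce to a statement about continuous functions. Fix a countable dense family $\{\phi_j\}\subset C(M)$, and consider the forward random Birkhoff averages
$$\phi_j^+(\omega,x)=\lim_{n\to\infty}\frac1n\sum_{k=0}^{n-1}\phi_j(f^k_\omega x),$$
which exist $\PP\otimes\vol$-a.e.\ by the Birkhoff theorem for the skew product $F$. For $y\in W^{ss}(\omega,x)$ we have $d(f^k_\omega x,f^k_\omega y)\to0$. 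By uniform continuity of $\phi_j$, whenever $\phi_j^+(\omega,x)$ exists, $\phi_j^+(\omega,y)$ also exists and equals it. So every $\phi_j^+$ is constant along $\omega$-strong stable leaves, wherever it is defined at one point of the leaf.

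The second step relates $1_A$ to these averages. Since $A$ is a.s.\ invariant, the set $\Omega\times A$ is $F$-invariant mod $\PP\otimes\vol$-null sets. I would take the ergodic decomposition of $\PP\otimes\vol$ under $F$. By the Kakutani correspondence cited in the introduction, this decomposition comes from a decomposition of $\vol$ into ergodic stationary components. By \cite{dewitt2025conservative} there are only finitely many such components, and each is absolutely continuous. The vector $(\phi_j^+(\omega,x))_j$ identifies the component containing $x$, since it determines the limiting empirical measure, which equals that component. Hence, up to a $\PP\otimes\vol$-null set, $1_A(x)$ is a measurable function of $(\phi_j^+(\omega,x))_j$. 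Concretely, define $\tilde A(\omega)=\{x: (\phi^+_j(\omega,x))_j \text{ equals the vector of an ergodic component contained in } A\}$. Then $\tilde A(\omega)$ is $W^{ss}(\omega,\cdot)$-saturated, in the sense that it is defined at every point of a leaf once it is defined at one. Moreover $\vol(A\,\Delta\,\tilde A(\omega))=0$ for $\PP$-a.e.\ $\omega$.

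The main obstacle is the passage from ``mod $\vol$'' to ``$m^s_x$-a.e.\ along leaves''. The leaf $W^{ss}(\omega,x)$ is a $\vol$-null curve, so knowing $A=\tilde A(\omega)$ mod $\vol$ does not by itself control $A$ on the leaf. The exceptional null set $N_\omega=A\,\Delta\,\tilde A(\omega)$ might meet $m^s$-positive parts of many leaves. To exclude this I need absolute continuity of the strong stable lamination restricted to Pesin sets $\cR(\omega,\fC)$: for a.e.\ $\omega$, a $\vol$-null set meets $m^s_x$-almost no point of $W^{ss}(\omega,x)$ for $\vol$-a.e.\ $x\in\cR(\omega,\fC)$. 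I would invoke this from the standard absolute continuity theory for random systems in \cite[Chapter III]{Liu1995smooth}. Its proof uses the uniform estimates of the codimension $1$ strong stable splitting (Definition \ref{defn:subtempered_splitting}) and the graph transform. Given it, for $\vol$-a.e.\ $x\in A\cap\cR(\omega,\fC)$ we get $x\in\tilde A(\omega)$. Saturation then puts every point of $W^{ss}(\omega,x)$ where the averages exist in $\tilde A(\omega)$. The averages exist $m^s$-a.e.\ on the leaf, again by absolute continuity applied to the null set where they fail. Finally, by absolute continuity once more, $m^s_x$-a.e.\ such point lies in $A$. Letting $\fC\to\infty$ and using Proposition \ref{prop:tail_on_splitting}, the sets $\cR(\omega,\fC)$ exhaust a full-measure set. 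A Fubini exchange between $\omega$ and $x$ then yields the statement.
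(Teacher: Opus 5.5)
The paper gives no proof of this theorem. It is quoted as a standard consequence of the stable manifold theory in \cite{Liu1995smooth} together with absolute continuity of the strong stable lamination, which the paper records as Proposition~\ref{prop Conditional}. Your Hopf-type argument is the standard proof and is correct in outline:
\begin{enumerate}
\item forward averages of continuous functions are constant along $\omega$-strong stable leaves;
\item $1_A$ is $\PP\otimes\vol$-a.e.\ a function of these averages;
\item absolute continuity transfers the identity $A=\tilde A(\omega)$ mod $\vol$ to $m^s$-a.e.\ points of leaves.
\end{enumerate}
The appeal to finiteness of ergodic components from \cite{dewitt2025conservative} is harmless but unnecessary. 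Let $\pi(\omega,x)=x$. Since $1_A\circ\pi$ is $F$-invariant, and conditional expectation onto the $F$-invariant $\sigma$-algebra is an $L^1$-contraction, choose continuous $\phi_k$ with $\sum_k\|\phi_k-1_A\|_{L^1}<\infty$. Then $\phi_k^+\to 1_A$ $\PP\otimes\vol$-a.e., and $\{x:\lim_k\phi_k^+(\omega,x)=1\}$ is leaf-saturated. This removes any dependence on coexpansion.

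Three points need tightening.

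\emph{Negative exponent.} The convergence $d(f^k_\omega x,f^k_\omega y)\to 0$ for $y\in W^{ss}(\omega,x)$ requires $\lambda^{(1)}(x)<0$. This follows from \eqref{eq itm3} via Proposition~\ref{prop:tail_on_splitting}. Once you have it, existence of the averages propagates to \emph{every} point of the leaf, so your step ``the averages exist $m^s$-a.e.\ on the leaf'' is redundant.

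\emph{Form of absolute continuity.} The exceptional set $N_\omega=A\,\Delta\,\tilde A(\omega)$ depends on the entire itinerary, and so does the lamination. You therefore need absolute continuity in the form ``for a.e.\ $\omega$ and \emph{every} $\vol$-null set $N$''; a statement ``for each fixed null $N$, for a.e.\ $\omega$'' would not suffice. The disintegration in Proposition~\ref{prop Conditional} gives exactly the needed form: for a fixed typical $\omega$, the conditionals $\nu^s_y$ are equivalent to $m^s_y$. Cite that rather than \cite{Liu1995smooth}, whose absolute continuity theorem concerns the full (weak) stable lamination.

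\emph{Local versus global leaves.} Proposition~\ref{prop Conditional} concerns local leaves $(W^{ss}(\omega,y)\cap B_\delta(x))_y$ with $y\in\cR(\omega,\fC)$, whereas the theorem is about the global leaf. Letting $\fC\to\infty$ only exhausts base points, not leaves. You need the usual globalization:
\begin{enumerate}
\item exhaust $W^{ss}(\omega,x)$ by the preimages under $f^n_\omega$ of local leaves at $F^n(\omega,x)$, along return times $n$ of $F^n(\omega,x)$ to a Pesin set (Poincar\'e recurrence for $F$);
\item apply the local statement for the itinerary $T^n\omega$ to the null set $f^n_\omega(N_\omega)$;
\item use that $f^n_\omega$ preserves $\vol$ and maps leafwise null sets to leafwise null sets.
\end{enumerate}
For the paper's only use of the theorem, in Proposition~\ref{lem:configuration_of_local_laminations}, the local version already suffices.
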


 It is important for our method to have quantitative control of the strong stable manifolds. The following notion is useful. 

\begin{definition}
Let $\gamma\subset M$ be a curve and $x\in \gamma.$ We say that $\gamma$ is {\em $(\fC, \reg)$-good at $x$} if there is a subcurve $x\in \hat\gamma\subset \gamma$ such that
$\exp^{-1}_x  \hat\gamma$ is the graph of a function $\phi_x\colon  \tilde V\to V^\perp$ with 
$\|\phi_x\|_{C^\reg}\leq \fC$
where $V$ is the tangent direction to $\gamma$ at $x$, and $\tilde V$ is the ball of radius $1/\fC$ in $V$ centered at 0.

Given a ball $B\ni x$ and $\fC$, $\gamma$ as above, we say that $\gamma$    $\fC$-properly crosses   $B$ if
$\gamma \cap B$ is a connected subcurve of $\gamma$ of   length  at least $r/\fC$ where $r$ is the radius of $B$.
\end{definition}

 In this paper we will only use this property with $\reg=2$,  therefore below we write $\fC$-good for
 $(\fC, 2)$-good.  
 
A standard way for controlling the geometry of stable manifolds and constructing stably laminated sets is via non-uniform hyperbolicity.

\begin{proposition} \label{prop SMT}
Let   $\reg \geq  1$, and suppose that $\mu$ is a compactly supported measure on $\Diff^{\reg+1}_{\vol}(M)$. For  any $\kappa,\kappa'>0$, there exists $\epsilon_0>0$ such that for any $0<\epsilon<\epsilon_0$, for any $\fC > 0$, there exists $D > 0$ such that 
    if for some  $(\omega, x) \in \Lambda_0$, \; $(D_xf^n_{\omega})_{n\ge 0}$ has a $(\fC,\kappa,\kappa',\epsilon)$-codimension $1$ strong stable splitting,  then  $W^{ss}(\omega, x)$ is $(D, \reg)$-good   at $x.$
\end{proposition}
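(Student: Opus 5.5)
We prove this by the standard graph transform (Hadamard–Perron) argument in exponential charts, tracking the subexponential growth $e^{\epsilon k}$ of the constants along the orbit. Write $x_k=f^k_\omega(x)$ and pass to exponential charts $\tilde f_k=\exp_{x_{k+1}}^{-1}\circ f_{\omega_{k+1}}\circ\exp_{x_k}$, defined on balls of a fixed radius. Since $\mu$ is compactly supported in $\Diff^{\reg+1}$, the $C^{\reg+1}$ norms of the $\tilde f_k$ are bounded uniformly by some $K$.

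At each time $k$ we use the splitting $T_{x_k}M=V_k\oplus W_k$. After a linear change of coordinates $L_k$ straightening $V_k\perp W_k$, we have $\|L_k\|,\|L_k^{-1}\|\le e^{\fC+\epsilon k}$ by \eqref{eqn:subtempered_splitting_angle}. The one-step maps $\hat f_k=L_{k+1}\tilde f_k L_k^{-1}$ then have linear part that is block diagonal. By \eqref{eqn:subtempered_splitting_norm}, the $V$-block is dominated by the conorm of the $W$-block with gap $e^{-\kappa}$, up to the subexponential errors. By \eqref{eq itm3}, the $V$-block contracts at rate $e^{-\kappa'}$, again up to these errors. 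To make the one-step estimates uniform, we would group times into blocks of length $N$. Alternatively, we can use Lyapunov-type adapted norms $\|\cdot\|_k'$ built from the splitting estimates, with comparability constant $e^{2\fC+2\epsilon k}$. In these norms the linear part has an honest gap $e^{-\kappa/2}$ and contraction $e^{-\kappa'/2}$ at each step, provided $\epsilon<\epsilon_0\ll\min(\kappa,\kappa')/(\reg+1)$. The nonlinear remainder has $C^{\reg+1}$ norm at most $K e^{c(\reg+1)\epsilon k}$ on a ball of radius $\rho_k=\rho_0 e^{-c\epsilon k}$. Choosing $\rho_0$ small depending on $\fC$, the nonlinearity is $C^1$-small relative to the gap on these shrinking balls.

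Next we construct the local strong stable manifold as a graph. Let $\mathcal{S}_k$ be the space of graphs of maps $\psi_k\colon V_k(\rho_k)\to W_k$ with $\psi_k(0)=0$, $D\psi_k(0)=0$ and $\|\psi_k\|_{C^\reg}\le 1$ in the adapted coordinates. The backward graph transform $\Gamma_k\colon\mathcal{S}_{k+1}\to\mathcal{S}_k$, which pulls back a graph under $\hat f_k$ and restricts it, is well defined because the $W$-direction expands relative to $V$. The domination gap makes $\Gamma_k$ a $C^0$ contraction. We then obtain the limit graphs $\psi_k=\lim_{n}\Gamma_k\circ\cdots\circ\Gamma_{k+n-1}(0)$.

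Their images are exactly the points whose forward orbits stay in the shrinking charts and contract at rate close to $e^{-\kappa'}$. By the uniqueness of such points and the characterization of $W^{ss}(\omega,x)$ through the exponent $\lambda^{(1)}$, whose space is $V=V^{(1)}$, the graph of $\psi_0$ is a neighborhood of $x$ in $W^{ss}(\omega,x)$, and it is tangent to $E^{ss}$ at $x$.

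The main obstacle is the $C^\reg$ bound: we must show that the invariant cone of $C^\reg$-bounded graphs is preserved. This relies on the standard fiber contraction (or $C^r$ section) argument. The $j$-th derivative of the pulled-back graph picks up a factor $\|D\hat f|_V\|^j\,\con(D\hat f|_W)^{-1}\le e^{-j\kappa'/2}e^{-\kappa/2}<1$, plus lower-order terms controlled by $K e^{C\epsilon k}$. Choosing $\epsilon_0$ small relative to $\kappa,\kappa'$ and $\reg$ absorbs the subexponential growth into the contraction, so the bound $\|\psi_k\|_{C^\reg}\le1$ propagates inductively. The regularity of $W^{ss}$ also follows from the cited standard results. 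At $k=0$ we then undo the change of coordinates $L_0$, which costs at most a factor $e^{\fC}$, and change the domain from the direction $V$ with $W$ complement to the orthogonal complement $V^\perp$, which is again controlled by the angle bound $e^{-\fC}$. This yields a graph over $\tilde V$ of radius $\rho_0 e^{-\fC}$ with $C^\reg$ norm bounded by a constant depending only on $\fC$. Taking $D$ to be the larger of that constant and $e^{\fC}/\rho_0$, we conclude that $W^{ss}(\omega,x)$ is $(D,\reg)$-good at $x$.
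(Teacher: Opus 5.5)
Your outline is correct and follows the same route as the paper, which invokes the standard Pesin/Hadamard graph-transform construction of \cite{barreira2007nonuniform, shub1987global}, observes that it uses only the domination \eqref{eqn:subtempered_splitting_norm} (together with the contraction \eqref{eq itm3}) with tempered constants rather than fixed Lyapunov rates, and refers to \cite{DDZ2} for details. One point to tighten is precisely where domination, rather than hyperbolicity, matters: $W$ need not expand ($\con(D\hat f|_W)$ can be less than $1$ when $W$ carries zero or weaker negative exponents), so $\Gamma_k$ is a contraction not in the plain $C^0$ norm but in the weighted norm $\sup_{v\neq 0}\|\psi(v)\|/\|v\|$ (available because all graphs pass through $0$); likewise the $C^j$ factor should be estimated as $\|D\hat f|_V\|^{j-1}\cdot \|D\hat f|_V\|/\con(D\hat f|_W)\le e^{-(j-1)\kappa'/2}e^{-\kappa/2}$, not as $e^{-j\kappa'/2}e^{-\kappa/2}$.
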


\begin{proof}   
This result follows from the usual construction of the stable manifolds \cite{barreira2007nonuniform}, \cite{shub1987global}. In fact assumptions in Pesin theory are slightly different, since it is commonly assumed that vectors from $V$ grow with at most fixed rate $\lambda^{(1)}+\eps$ and the vectors
from $W$ grow with at least fixed rate $\lambda^{(2)}-\eps$ with $\lambda^{(2)}>\lambda^{(1)}.$
However, the proofs in the  references mentioned above  only use the domination 
in the sense of \eqref{eqn:subtempered_splitting_norm}, so they go through under our assumption with minor
modifications. For details, see the manuscript \cite[Thm.~1.3,Cor.~1.5]{DDZ2}.
 \end{proof}
 
 Just like for deterministic iterations, it is known that for a $\PP$-typical $\omega$, the collection of $\omega$-strong stable manifolds forms an absolutely continuous measurable lamination, and the conditional measure along the strong stable leaves restricted to some $\cR(\omega, \fC)$ has bounded density.  
 We now explain this point in detail.
 	In the course of the proof we will study the strong stable laminations restricted to a ball $B_{\delta}(x)$ for some $x\in M$ and a small $\delta>0$. 
 	Given $\omega \in \Omega$ and $\fC > 0$,  we let 
 	\[
 	\Lambda^{ss}(\omega, \fC,  x,\delta) = \bigcup_{y\in \cR(\omega, \fC) \cap B_{\delta}(x)}  \Big (W^{ss}(\omega, y) \cap B_{\delta}(x) \Big )_y. 
 	\]
    When $\delta$ is sufficiently small depending on $\fC$, the right hand side of the above equation forms a measurable partition of $\Lambda^{ss}(\omega, \fC, x,\delta)$ into a set of local strong stable manifolds.
 	By Rokhlin's Disintegration Theorem we have a decomposition
 	\begin{align} \label{eq decomposition}
 		\vol|_{\Lambda^{ss}(\omega, \fC, x,\delta)} = \int_{\Lambda^{ss}(\omega, \fC, x,\delta)} \nu_y^s 
         d\vol(y)
 	\end{align}
   where $\nu_y^s$ is a probability measure supported on $W^{ss}(\omega, y)$ for $\vol$-a.e.~$y \in	\Lambda^{ss}(\omega, \fC,  x,\delta)$.
    Moreover, for $\vol$-a.e. $y \in \Lambda^{ss}(\omega, \fC, x,\delta)$, for $\nu_y^s$-a.e.\ $z$, we have 
    $\nu_y^s = \nu_z^s $.
 	 	For simplicity, we are  suppressing the dependence on $\kappa,\kappa',\epsilon$ in  these notations. 
 	
 	   The following statement could be proven by the arguments from the proof of 
     \cite[Chapter III, Theorem 6.1]{Liu1995smooth}.  As mentioned in the proof of Proposition \ref{prop SMT}, we do not make assumptions on either contraction rate on $V$ or minimal expansion of $W$, but the
     proof from the above reference applies without significant changes. Alternatively we can directly deduce
     Proposition \ref{prop Conditional} from absolute continuity of the (weak) stable lamination which
     follows from \cite[Chapter III, Theorem 6.1]{Liu1995smooth} and the smoothness of strong stable lamination inside weak stable leaves proven in \cite{brown2022smoothness}. See also 
\cite[\S 8.6]{barreira2007nonuniform}. Although the argument is standard, see \cite[\S 7]{DDZ2}, where a detailed argument is given that applies in our specific setting.
 	\begin{proposition} \label{prop Conditional} 
 		Suppose that $\mu$ is a compactly supported measure on $\Diff^{2}_{\vol}(M)$. 
 		For every $\fC > 0$,  there exist $\delta_0>0$ and $\hat{C}>0$ such that the following holds. For all $x\in M$, $0<\delta\le \delta_0$, and $\PP$-a.e.\ $\omega$, for $\vol$-a.e.\ $y \in 	\Lambda^{ss}(\omega, \fC,  x,\delta)$, $\nu_y^s$  is absolutely continuous with respect to   $m_y^s$
        with density $\varrho_y$ supported on $( W^{ss}(\omega, y)\cap B_{\delta}(x) )_y$, satisfying that for $m_y^s$-a.e. $z,z'\in ( W^{ss}(\omega, y)\cap B_{\delta}(x) )_y$
 		\begin{equation}
 		    \label{DensityComp}
1/{\hat C}<   \varrho_y(z)/\varrho_y(z') \le \hat C. 
 		\end{equation}
 	\end{proposition}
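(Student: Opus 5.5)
We prove Proposition \ref{prop Conditional} by adapting the usual Pesin-theoretic holonomy argument to a single fixed itinerary $\omega$. Fix $\fC$ and $\omega$ typical, and work inside $\Lambda^{ss}(\omega,\fC,x,\delta)$. By Proposition \ref{prop SMT}, every local leaf through a point $y\in\cR(\omega,\fC)$ is $D$-good. Hence, for $\delta_0$ small depending on $\fC$, these leaves are uniformly $C^2$ graphs over their tangent lines across the whole ball $B_\delta(x)$. The plan has two steps: first show the lamination is absolutely continuous with uniformly controlled holonomy Jacobians; then read off the conditional densities as ratios of these Jacobians.

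The first step is a uniform distortion bound along leaves. For $z,z'$ on the same local leaf through $y\in\cR(\omega,\fC)$, we have $d(f^k_\omega z,f^k_\omega z')\le C e^{-\kappa' k+\bar\epsilon k}\delta$ by \eqref{eq itm3} together with the graph control. Consider the candidate density
\[
\varrho_y(z)\propto \prod_{k\ge 0}\frac{\|D_{f^k_\omega y}f_{\omega_{k+1}}|E^{ss}_k(y)\|}{\|D_{f^k_\omega z}f_{\omega_{k+1}}|T W^{ss}_k(z)\|}.
\]
This product converges, with logarithm uniformly bounded by $\hat C$, by the following estimate. Each factor's logarithm is bounded by the $C^2$ norm of the maps (compact support of $\mu$) times the sum of $d(f^k z,f^k y)$ and the angle between the tangent lines of the iterated leaf at $f^kz$ and $f^ky$. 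The tangent line is Hölder/Lipschitz along the leaf with constant growing at most like $e^{\bar\epsilon k}$, by the $C^2$ goodness of the iterated leaves, which itself follows from Proposition \ref{prop SMT} applied at time $k$ with constant $\fC+\bar\epsilon k$. The sum is then $\sum_k e^{(-\kappa'+2\bar\epsilon)k}\delta<\infty$, since $\bar\epsilon<\kappa'/10$. This gives \eqref{DensityComp} once we know $\nu^s_y=\varrho_y\,m^s_y/\!\int\varrho_y$.

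The second step identifies the conditionals, and it is the main obstacle, since it requires absolute continuity of the strong stable holonomy between transversals under only subtempered domination (rather than a spectral gap $\lambda^{(2)}-\lambda^{(1)}$). We would follow the proof of \cite[Chapter III, Theorem 6.1]{Liu1995smooth}. Take two smooth codimension-one transversals $T_1,T_2$ in $B_\delta(x)$ and push them forward by $f^n_\omega$. By the domination \eqref{eqn:subtempered_splitting_norm} and the angle bound \eqref{eqn:subtempered_splitting_angle}, the images $f^n T_i$ become $C^1$-close to the $W_n$-directions at an exponential rate, uniformly on $\cR(\omega,\fC)$. Meanwhile the leaves shrink, so the holonomy at time $n$ is between nearby nearly parallel graphs and has Jacobian near $1$. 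Pulling back, using that volume is preserved and that the codimension-one Jacobians along $T_i$ telescope against the leaf Jacobians above, expresses the holonomy Jacobian as a convergent infinite product with uniformly bounded logarithm. A Fubini argument in the foliated chart then shows that volume disintegrates along leaves with density $\varrho_y$ (up to normalization). By uniqueness of Rokhlin disintegration this density equals $\nu^s_y$, and \eqref{DensityComp} follows from the uniform bound on $\log\varrho_y$, for $\PP$-a.e.\ $\omega$ and $\vol$-a.e.\ $y$. Alternatively, if the direct holonomy estimate becomes cumbersome, we would combine absolute continuity of the weak stable lamination with the smoothness of strong stables inside weak stable leaves \cite{brown2022smoothness}.
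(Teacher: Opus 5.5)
Your proposal takes the same approach as the paper. The paper gives no detailed argument: it says the result follows by adapting \cite[Chapter III, Theorem 6.1]{Liu1995smooth} to the subtempered domination setting (details deferred to \cite[\S 7]{DDZ2}), or alternatively from absolute continuity of the weak stable lamination combined with \cite{brown2022smoothness}. These are exactly your two routes.

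One slip: for volume, the conditional density along strong stable leaves satisfies $\varrho_y(z)/\varrho_y(y)=\prod_{k\ge 0}\|D_{f^k_\omega z}f_{\omega_{k+1}}|TW^{ss}_k(z)\|\,/\,\|D_{f^k_\omega y}f_{\omega_{k+1}}|E^{ss}_k(y)\|$. This is the reciprocal of your product, since more strongly contracted pieces of a leaf carry less mass. The slip is harmless here because your distortion bound controls $|\log|$ of the product and \eqref{DensityComp} is two-sided.
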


  Another important ingredient in our proof is that the stable direction is not deterministic. The following lemma is proved in Appendix \ref{app:construction_of_splitting}.

\begin{lemma}\label{lem:ahlfors_Ess}
Suppose that $\mu\in \cG$. Then there exist $C',\alpha>0$ such that  for $\vol$-a.e.\ $x\in M$, any  $W \in \Gr(T_xM, d-1)$, and any $\rho > 0$,
\begin{equation}
\mathbb{P}(\omega \mid \angle(E^{ss}(\omega,x), W)<\rho)<C' \rho^{\alpha}.
\end{equation}
\end{lemma}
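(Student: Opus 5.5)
We have to prove that the probability that the strong stable direction $E^{ss}(\omega,x)$ lies within angle $\rho$ of a fixed hyperplane $W$ decays polynomially in $\rho$, uniformly in $x$ and $W$. The plan is to dualize and use coexpansion on average (Lemma \ref{lem:codimension_1_hyperbolicity_gaps_implies_coexpansion}).

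Let $\xi\in T_x^*M$ be a unit covector with $\ker\xi=W$. For a unit vector $v$,
$$\angle(v,W)\asymp|\xi(v)|.$$
Write $\xi_n=(D_xf^n_\omega)^{-*}\xi$. Then for any unit $v$,
$$\xi(v)=\xi_n(D_xf^n_\omega v).$$
Take $v\in E^{ss}(\omega,x)$. Proposition \ref{prop:tail_on_splitting} says that, off an event of probability $Re^{-\eta\fC}$, the splitting is $(\fC,\kappa,\kappa',\bar\epsilon)$-good. On that event:
\begin{itemize}
\item $D_xf^n_\omega v$ is strongly contracted, $\|D_xf^n_\omega v\|\le e^{\fC-\kappa' n}$;
\item $D_xf^n_\omega$ is dominated on $W_0$, so every direction outside $V$ is pushed towards $W_n$;
\item the angle between $V_n$ and $W_n$ is at least $e^{-\fC-\bar\epsilon n}$.
\end{itemize}

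Decompose $\xi=\xi^W+\xi^V$ with respect to the splitting $V\oplus W_0$, so that $\xi^V$ annihilates $W_0$ and $\xi^V(v)=\xi(v)$. The point is that $\xi_n$ is governed by the $W$-component, which grows under the inverse adjoint at exponential rate. Then $\xi_n(D_xf^n_\omega v)=\xi(v)$, while $\|\xi_n\|$ grows like $e^{cn}$ by coexpansion. Pushing this through directly gives only the trivial identity, so the argument has to be run on $W$ instead, as follows.

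\textbf{Conditional large deviation.} The cleaner route is to avoid computing $\xi_n$ explicitly and prove a large deviation statement. Coexpansion on average, iterated with a Markov/supermartingale argument for the cocycle $\log\|(D f^N)^{-*}\xi\|$, gives constants $c,C,\beta>0$ such that for every unit $\xi$,
$$\PP\bigl(\|\xi_{nN}\|\le e^{cnN}\bigr)\le Ce^{-\beta n}.$$
The standard way to get this is to find $s>0$ with
$$\EV\,\|(Df^N)^{-*}\xi\|^{-s}\le\theta<1$$
uniformly in $\xi$. This follows from the positive expectation by a Taylor expansion in $s$, using compact support of $\mu$. Iterating by the Markov property gives the bound.

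\textbf{Relating $\xi_n$ to the angle.} I want the direction of $\xi_n$ to be typically transverse to $V_n$, and the angle condition to force $\xi_n$ to nearly annihilate $V_n$. The geometric fact needed is the following. The hyperplane $W$ is mapped by $D_xf^n_\omega$ to $\ker\xi_n$. Since $v$ has angle at most $\rho$ with $W$, write $v=w+u$ with $w\in W$ and $|u|\lesssim\rho$. Under the dominated splitting, $D_xf^n_\omega W$ contains a vector $D_xf^n_\omega w$ whose $V_n$-component is approximately $D_xf^n_\omega v$ minus a small error. The plan is then:
\begin{itemize}
\item either $W$ contains $V$ up to angle $\rho$, in which case $\ker\xi_n$ stays close to $V_n$ for $n\lesssim\log(1/\rho)/(\text{rate})$;
\item and $\ker\xi_n$ being close to the most contracted direction $V_n$ means that $\xi_n$ restricted to $D_xf^n_\omega W$ sees little expansion.
\end{itemize}
Concretely, by the hyperbolicity gap \eqref{D-1Gap}, $\|\xi_n\|=|\det(D_xf^n_\omega|W)|^{-1}$ up to constants, since the volume is preserved. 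The bad event forces
$$|\det(D_xf^n_\omega|W)|\gtrsim e^{-\kappa' n}\rho^{-1}\times\ldots$$
which contradicts the expansion rate on $W$ for $n\approx\log(1/\rho)/C_0$.

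Choosing $\fC=\epsilon_1\log(1/\rho)$ and $n=\epsilon_2\log(1/\rho)$, the bad event is contained in the union of the failure of the splitting, with probability $R\rho^{\eta\epsilon_1}$, and a large deviation event, with probability $C\rho^{\beta\epsilon_2}$. This yields the bound $C'\rho^\alpha$.

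\textbf{Main obstacle.} The hardest step is the rigorous linear-algebra link: showing that a small angle between $E^{ss}$ and $W$ forces anomalously slow growth of $\|(D_xf^n_\omega)^{-*}\xi\|$ (equivalently, slow growth of the determinant on $W$) up to time $n\sim\log(1/\rho)$, with only subtempered constants $e^{\fC+\bar\epsilon n}$ lost. I would first try the following. Compare $D_xf^n_\omega W$ with the hyperplane $D_xf^n_\omega(V+W'')$, where $W''=W\cap W_0$ has codimension one in $W$. Bound the determinant on $W$ by
$$\|D_xf^n_\omega w_\perp\|\cdot|\det(D_xf^n_\omega|W'')|,$$
where $w_\perp$ is the unit vector of $W$ orthogonal to $W''$. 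This vector is within $\rho$ of $V$, so its image has norm at most $e^{\fC-\kappa' n}+\rho\, e^{Ln}$. Applying the coexpansion large deviation to the hyperplane $V\oplus W''$ then produces the contradiction.
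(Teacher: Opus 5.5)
\textbf{There is a genuine gap.} The step you label the ``main obstacle'' is the claim that a small angle between $E^{ss}$ and $W$ forces slow growth of $\|\xi_n\|=|\det(D_xf^n_\omega|W)|$. This claim is not just unproved but false. In fact, no argument that uses only volume preservation, coexpansion on average and the conclusion of Proposition~\ref{prop:tail_on_splitting} can give the lemma.

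Consider the linear cocycle $g=\mathrm{diag}(e^{-1},e^{1/2}B')$ on $\R e_1\oplus\R^2$, with $B'$ i.i.d.\ from a compactly supported strongly irreducible measure on $SL_2(\R)$ with top exponent $1$. All of your inputs hold:
\begin{itemize}
\item $\det g=1$, and the exponents are $-1<-\tfrac12<\tfrac32$, so $E^{ss}=\R e_1$.
\item The orthogonal splitting $\R e_1\oplus\R^2$ is dominated and strongly contracting, with exponential tails.
\item On $\Lambda^2\R^3$ the cocycle is $e^{-1/2}B'\oplus e$. Hence $\det(g^n|E)$ grows at rate at least $\tfrac12$ uniformly in the plane $E$, i.e.\ $g$ is coexpanding on average.
\end{itemize}
Yet $E^{ss}$ is deterministic. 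For $W=\R e_1\oplus\ell$ the angle is $0$ almost surely, while $\|\xi_n\|=\det(g^n|W)\approx e^{n/2}$ still grows.

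What fails in this example is exactly the codimension one gap, since $\con(g^n|W)\det(g^n|W)\approx e^{-n/2}$. The determinant on $W$ cannot see whether $W$ contains the most contracted direction. The gap ratio $\|P_{W_n^\perp}D_xf^n_\omega u_W\|/\con(D_xf^n_\omega|W)$, with $W_n=D_xf^n_\omega W$, can. Its numerator is always at least $\con(D_xf^n_\omega)$, so the ratio is not small when $W$ almost contains a most contracted vector.

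Separately, invoking Proposition~\ref{prop:tail_on_splitting} is circular. In the paper, the angle condition \eqref{eq condition1} in its proof is controlled precisely by Lemma~\ref{lem:ahlfors_Ess}.

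There are also two smaller slips. Volume preservation gives $\|\xi_n\|=|\det(D_xf^n_\omega|W)|$, not its inverse. And the unit vector of $W$ orthogonal to $W\cap W_0$ need not be near $V$; for example take $V=\R(e_1+e_3)$, $W_0=e_3^\perp$, $W=e_2^\perp$.

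\textbf{What is missing.} You need to apply the gap to the fixed hyperplane $W$ and compare growth inside $W$ with $\con(D_xf^n_\omega)$. The paper does this in three steps.
\begin{enumerate}
\item Lemma~\ref{lem MomentBound} iterates \eqref{CoDim1Gap} along the Markov chain on the Grassmannian bundle. This gives the fractional moment bound $\EV(\|A^{-1}\|\|B\|)^\sigma\le C_1e^{-n\sigma\kappa_1/2}$ for the blocks of $D_xf^n_\omega$ relative to $W\oplus W^\perp$.
\item Lemma~\ref{sublem growth} uses time blocks of length $n_1\asymp|\log\rho|$, Markov's inequality and a union bound. This produces an event $\Omega_0(W,\rho)$ of probability at least $1-\rho^{\kappa_2}$ on which $\prod_j\|A_j^{-1}\|\|B_j\|$ decays exponentially. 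On that event, the graph transform $\cL_{m+1}=B_m\cL_m(A_m+C_m\cL_m)^{-1}$ shows that every unit vector $u$ of every $W'\in\cU_{c_3\rho}(W)$ satisfies $\|D_xf^n_\omega u\|>\con(D_xf^n_\omega)e^{c_4n}$ for all large $n$ divisible by $n_1$.
\item Because this holds for all large $n$, the qualitative Oseledets bound $\|D_xf^n_\omega|E^{ss}\|\le\con(D_xf^n_\omega)e^{2\epsilon n}$ already forces $E^{ss}(\omega,x)\cap W'=\{0\}$.
\end{enumerate}
No finite-time control of $E^{ss}$ is needed. That is why the lemma can be proved first and then used to prove Proposition~\ref{prop:tail_on_splitting}.
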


\begin{corollary}\label{lem:pesin_blocks1}
	Suppose that $  \mu\in \cG$. 
	For every $\eps > 0$, there exist $\fC > 0$ and $\delta_1 \in (0, \delta_{inj})$ such that for every $x \in M$, for every $\delta \in (0, \delta_1)$, for $\vol$-a.e.\ $y \in B_{\delta}(x)$, we have 
	\begin{align*}
 \PP( \omega \mid \mbox{$y \in  \cR(\omega, \fC)$ and   $W^{ss}(\omega, y)$ $\fC$-properly crosses $B(x, \delta)$}) > 1 - \eps.
	\end{align*}
\end{corollary}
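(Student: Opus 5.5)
The plan is to combine the tail bound of Proposition \ref{prop:tail_on_splitting} with the geometric control of Proposition \ref{prop SMT}. Fix $\eps>0$. By Proposition \ref{prop:tail_on_splitting}, applied with $\epsilon=\bar\epsilon$, there are $R,\eta>0$ such that for $\vol$-a.e.\ $y\in M$ and every $\fC_0>0$,
\[
\PP(y\in\cR(\omega,\fC_0))\ge 1-Re^{-\eta\fC_0}.
\]
The constants $R,\eta$ do not depend on $y$. We therefore choose $\fC_0$ so large that $Re^{-\eta\fC_0}<\eps$; this choice is uniform in $y$.

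Next we turn the Pesin-set condition into geometry. Proposition \ref{prop SMT} is applied with $\reg=2$, which is legitimate because $\mu$ is supported on $C^\infty\subset C^3$ maps, and with the fixed $\kappa,\kappa'$ and $\bar\epsilon<\epsilon_0$. It yields $D=D(\fC_0)$ such that for every $y\in\cR(\omega,\fC_0)$ (with $(\omega,y)\in\Lambda_0$) the curve $W^{ss}(\omega,y)$ is $D$-good at $y$. Concretely, $\exp_y^{-1}$ of a subcurve through $y$ is the graph of $\phi_y\colon\tilde V\to V^\perp$ over the ball of radius $1/D$ in $E^{ss}(\omega,y)$, with $\phi_y(0)=0$, $D\phi_y(0)=0$ and $\|\phi_y\|_{C^2}\le D$. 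Set $\fC=\max(\fC_0,\,C_1 D)$ for a constant $C_1$ to be fixed below. Since $\cR(\omega,\cdot)$ is increasing in the constant, we still have $y\in\cR(\omega,\fC)$ with probability at least $1-\eps$.

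The remaining and main step is the deterministic claim that a $D$-good graph through a point $y\in B_\delta(x)$ properly crosses $B(x,\delta)$ once $\delta$ is small. We work in normal coordinates at $x$ (or at $y$), so that the metric is $C^2$-close to Euclidean on scale $\delta_{inj}$. The graph of $\phi_y$ has curvature $O(D)$, so on scales $\ll 1/D$ it is a near-straight segment through $y$. Choose $\delta_1\ll 1/D^2$ (and $\ll 1/D$) so that both the graph and the ball $B_\delta(x)$ sit inside a region where these approximations hold. The graph then extends at least $1/D\gg\delta$ in both directions from $y$, so it exits $B_\delta(x)$ on both sides.

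Two properties of the intersection with the ball must be checked. First, connectedness of $\gamma\cap B_\delta(x)$: along the curve, $t\mapsto |\gamma(t)-x|^2$ has second derivative $2+O(D\delta)>0$, so it is strictly convex and the sublevel set is an interval. Second, a length lower bound of order $\delta/C_1$. This is the delicate case, when $y$ is near $\partial B_\delta(x)$ and the line through $y$ is nearly tangent to the sphere, so the chord could be short. We will check whether the chord is always at least $\sim\delta$ or whether the statement must be weakened. If it must, we handle it by reducing to $y\in B_{\delta/2}(x)$, or by noting that for a chord starting at any point of the closed ball the longer half-chord through $y$ has length at least $\delta$ for a straight line. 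The curvature correction $O(D\delta^2)\ll\delta$ then preserves a chord of length at least $\delta/2$, and we take $C_1=2$ (or $4$ to absorb the metric distortion).

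With connectedness and the length bound in hand, every $y\in\cR(\omega,\fC_0)\subset\cR(\omega,\fC)$ has $W^{ss}(\omega,y)$ $\fC$-properly crossing $B(x,\delta)$. This gives the probability bound $>1-\eps$ for all $\delta<\delta_1$, uniformly in $x$.
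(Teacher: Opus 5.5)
There is a genuine gap. Combining Proposition \ref{prop:tail_on_splitting} with Proposition \ref{prop SMT} is the right start. The trouble is your final ``deterministic'' step: it is false, and it fails exactly in the case you flagged as delicate. For a straight line through $y$ in direction $v$, the chord in $B_\delta(x)$ has length $2\sqrt{\delta^2-|y-x|^2+\langle y-x,v\rangle^2}$. This tends to $0$ as $|y-x|\to\delta$ and $v\to(y-x)^\perp$; a line tangent to $\partial B_\delta(x)$ at $y$ meets the closed ball only in $\{y\}$. So your claim that the longer half-chord through $y$ always has length at least $\delta$ is wrong. The curvature bound $O(D)\ll 1/\delta$ cannot help, since the curve is essentially straight on this scale.

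Restricting to $y\in B_{\delta/2}(x)$ only proves a weaker statement. The corollary asks for a.e.\ $y$ in the whole ball, and Proposition \ref{prop:laminations_exist} applies it to arbitrary $A\subseteq B_\delta(x)$, for instance sets concentrated near the boundary. Because $\fC$ may not depend on $\delta$, the shell $\{\delta(1-\fC^{-2}/8)<|y-x|<\delta\}$ has positive volume. On it, an $E^{ss}(\omega,y)$ that is nearly tangent to the sphere gives a chord shorter than $\delta/\fC$. The Pesin-set condition controls growth rates, not the direction of $E^{ss}(\omega,y)$ relative to $y-x$. So some input on that direction is unavoidable.

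The missing ingredient is the randomness of the strong stable direction, Lemma \ref{lem:ahlfors_Ess}. The paper splits $\eps$ into two halves:
\begin{itemize}
\item $\eps/2$ covers the event $y\notin\cR(\omega,\fC_0)$;
\item $\eps/2$ covers the event $\angle(E^{ss}(\omega,y),W)<\rho$, where $W=(y-x)^\perp$ in the exponential chart at $x$ and $\rho$ is chosen with $C'\rho^\alpha<\eps/2$.
\end{itemize}
Transporting $W$ to $T_yM$ is harmless for $\delta$ small, because the charts at $x$ and $y$ are $\rho^2$-close.

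On the complementary event, a unit vector $v\in E^{ss}(\omega,y)$ satisfies $|\langle y-x,v\rangle|\ge|y-x|\sin\rho$. The straight chord then has length at least $2\sqrt{\delta^2-|y-x|^2\cos^2\rho}\ge 2\delta\sin\rho$, uniformly over $y\in B_\delta(x)$. The $D$-good curve therefore crosses $B_\delta(x)$ in a connected piece of length $\gtrsim\rho\delta$ once $\delta\ll 1/D$. One then takes $\fC\gg\max(\fC_0,D,\rho^{-1})$. Note that $\fC$ thus depends on $\eps$ through $\rho$ as well as through the tail bound.
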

\begin{proof}   
	By Proposition \ref{prop:tail_on_splitting},  given $\eps$ there exists $\fC > 0$, such that   for $\vol$-a.e.\ $y \in M$, with probability at least $1-\eps/2$, $y \in \cR(\omega, \fC)$. 
	By Proposition \ref{prop SMT} (for $\reg = 2$), there is $\fD > 0$ such that  $W^{ss}(\omega, y)$ is $\fD$-good at $y$ once $y \in \cR(\omega, \fC)$.
    We let $\rho \in (0, 1/2)$ be a parameter much smaller than $(\eps/(2C'))^{1/\alpha}$ where $C',  \alpha$ are given by  Lemma \ref{lem:ahlfors_Ess}.

Now we fix an arbitrary $x \in M$.
We let $\delta \in (0, 1/\fD)$ be a small parameter  so that the exponential charts at $y$ for all $y \in B_{\delta}(x)$ are $\rho^2$-close. Take a $\vol$-typical $y \in B_{\delta}(x) \setminus \{x\}$. 
	Applying Lemma~\ref{lem:ahlfors_Ess}  for $W = (y-x)^{\perp}$ (in the exponential chart at $x$) with such $\rho$,  we see that with probability at least $1 - \eps/2$, $\angle( E^{ss}(\omega, y), W) > \rho$. Leaving $\fD$ much larger than $\rho^{-1}$, we see that $W^{ss}(\omega, y)$ 
    $\fD$-properly crosses $B_{\delta}(x)$ as long as $W^{ss}(\omega, y)$ is $\fD$-good at $y$ and $\angle( E^{ss}(\omega, y), W) > \rho$ provided $\delta$ is small enough. We conclude the proof by setting $\fC > \fD$.
\end{proof}

In a modest abuse of notation, if $E_1,\ldots,E_d$ are one dimensional subspaces of $V$, where $V$ is an inner product space of dimension $d$, we will write $\abs{\det(E_1,\ldots,E_d)}$ for $\abs{\det(e_1,\ldots,e_d)}$ 
where for $1\le i\le d$, $e_i$ is a unit vector in $E_i$.

\begin{corollary}[Pointwise non-degeneracy] \label{cor:pw_nondegeneracy}
    Suppose that $\mu \in \cG$. Then there exists $\theta_1>0$   such that for $\vol$-a.e.\ $x\in M$ we have 
    \[
\mathbb{P}^{\otimes d}\Big ((\omega^1, \ldots, \omega^d) \in \Omega^{d} \mid  \abs{\det(E^{ss}(\omega^1,x),\ldots,E^{ss}(\omega^d,x))}>\theta_1 \Big )>\theta_1. 
    \]
    \end{corollary}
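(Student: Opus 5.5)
We derive the corollary from Lemma~\ref{lem:ahlfors_Ess} by sequential conditioning, using independence of the $\omega^i$ under $\PP^{\otimes d}$. Fix a $\vol$-typical $x$ for which the conclusion of Lemma~\ref{lem:ahlfors_Ess} holds; it then holds for all $W\in \Gr(T_xM,d-1)$ simultaneously, with constants $C',\alpha$ independent of $x$. Fix $\rho>0$ small, to be chosen depending only on $C',\alpha,d$. For unit vectors $e_i\in E^{ss}(\omega^i,x)$ we use the elementary factorization
\[
\abs{\det(e_1,\ldots,e_d)}=\prod_{k=2}^{d} \operatorname{dist}\big(e_k,\operatorname{span}(e_1,\ldots,e_{k-1})\big),
\]
the volume of the parallelepiped computed as base times height. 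Since $\operatorname{dist}(e_k,U)\ge \sin\angle(e_k,W)$ whenever $U\subset W$, it suffices to control each factor by an angle to a hyperplane.

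\emph{Step $k$.} Condition on $\omega^1,\ldots,\omega^{k-1}$, which determine $U_{k-1}=\operatorname{span}(e_1,\ldots,e_{k-1})$, a subspace of dimension at most $k-1\le d-1$. Choose any hyperplane $W_{k-1}\supset U_{k-1}$; this is a measurable function of $\omega^1,\ldots,\omega^{k-1}$. Because $\omega^k$ is independent of the earlier coordinates, Lemma~\ref{lem:ahlfors_Ess} applied with $W=W_{k-1}$ gives
\[
\PP\big(\angle(E^{ss}(\omega^k,x),W_{k-1})<\rho\,\big|\,\omega^1,\ldots,\omega^{k-1}\big)<C'\rho^\alpha .
\]
On the complementary event, $\operatorname{dist}(e_k,U_{k-1})\ge \sin\angle(e_k,W_{k-1})\ge \sin\rho\ge 2\rho/\pi$.

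\emph{Conclusion.} Combine the steps by Fubini (iterated conditioning) or simply by a union bound. With probability at least $1-(d-1)C'\rho^\alpha$, every factor in the product is at least $2\rho/\pi$, so $\abs{\det}\ge (2\rho/\pi)^{d-1}$. Choose $\rho$ with $(d-1)C'\rho^\alpha\le 1/2$, and set
\[
\theta_1=\min\big(1/2,\,(2\rho/\pi)^{d-1}\big).
\]
Note that $\theta_1$ is uniform in $x$.

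\emph{Main obstacle.} The only delicate point is measurability and the use of independence. The hyperplane $W_{k-1}$ depends on the earlier words, so we need the estimate of Lemma~\ref{lem:ahlfors_Ess} to hold for all $W$ at once for the fixed typical $x$, which it does as stated. We also need a measurable selection of $W_{k-1}$, which is straightforward: for example, take the orthogonal complement of a unit vector in $U_{k-1}^\perp$ chosen by a fixed measurable rule, say Gram--Schmidt applied to a fixed basis.
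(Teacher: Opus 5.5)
Your proposal is correct and follows the paper's argument: fix a typical $x$, then at each step apply Lemma~\ref{lem:ahlfors_Ess} to a hyperplane containing the previously obtained strong stable directions, using the independence of the $\omega^i$. The only difference is cosmetic. You combine the steps with a union bound, which gives probability $>1/2$ and an explicit $\theta_1$ in terms of $\rho$, whereas the paper's induction multiplies the conditional probabilities and loses a factor $1/2$ at each step.
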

\begin{proof}  
The proof is by induction. Fix an arbitrary $x \in M$ so that Lemma \ref{lem:ahlfors_Ess} applies.
We pick an arbitrary $\omega^1 \in \Omega$ so that $E^{ss}(\omega^1, x)$ is defined, and pick a unit $v_{1} \in E^{ss}(\omega^1, x)$. 
Assume that for $1 \leq j \leq d-1$ there is some $\beta_j > 0$ depending only on $\mu$ and a  set of $(\omega^1, \ldots, \omega^j)$ with $\PP^{\otimes j}$-measure at least $2^{-j}$ such that   for  unit vectors $( v_{k} \in E^{ss}(\omega^k, x) )_{k = 1}^{j}$ we have     $\| v_{1} \wedge \cdots \wedge v_{j} \| > \beta_{j}$. Pick some  $W \in \Gr(T_{x}M, d-1)$ containing $E^{ss}(\omega^k, x)$ for every $k \in \{ 1, \dots, j \}$. Then by Lemma \ref{lem:ahlfors_Ess},
$\DS\mathbb{P}(
\angle(E^{ss}(\omega^{j+1}, x), W) > \theta')\geq 1/2$
for some $\theta' > 0$ depending only on $\mu$.
  If this event happens then
$\| v_{1} \wedge \cdots \wedge v_{j+1} \| > \beta_{j+1}$ for some $\beta_{j+1} > 0$ depending only on $\beta_{j}$ and $\mu$. The proof is completed when we arrive at $j = d$.
\end{proof}

In the following, we will find for each $x \in M$ a typical itinerary $\omega \in \Omega$ so that a definite portion of a small ball centered at $x$ is laminated by 
$\omega$-strong stable manifolds in $B_{\delta}(x)$ which are sufficiently smooth and almost parallel.

\begin{definition} \label{def goodandaligned}
	Given $\fC > 0$, $\delta \in (0, \delta_{inj})$, $x \in M$, 
	and $\omega \in \Omega$ we say that a Borel subset  $K \subset B_{\delta}(x)$ is \emph{$(\omega, \fC)$-stably laminated} 
if there is a Borel subset $S \subset B_{\delta}(x)   \cap \cR(\omega, \fC)$ such that 
$$K = \bigcup_{y \in S} \Big (W^{ss}(\omega, y) \cap B_{\delta}(x) \Big )_y,$$
and for every $y \in S$, $W^{ss}(\omega, y)$ is $\fC$-good  at $y$ and $\fC$-properly crosses $B_{\delta}(x).$ 

Given $K$ as above, $v\in T_x M\setminus \{0\}$ and $\eps>0$ we say that $K$ is {\em $\eps$-aligned with $v$} 
if  for every $z \in K$, the angle between $E^{ss}(\omega, z)$ and $v$ (viewed in the exponential chart at $x$) is at most $\eps$. 
\end{definition}

The main result of this section is the following.

\begin{proposition} \label{prop:laminations_exist}
	Suppose that $\mu \in \cG$. Then there exist $\fD>0, \theta_0>0$ such that for any $\eps >0$ there exist $\delta_0 \in (0, \delta_{inj})$, $\eta_0 >0$ such that for any $x\in M$,  any $0<\delta<\delta_0$ and   any Borel subset $A\subseteq B_{\delta}(x)$ with $\vol(A) > 0$,  there exist ${\bf v} = (v_1,\ldots,v_d )\in ( T^1_xM )^d$ with $\abs{\det(v_1,\ldots,v_d)}>\theta_0$, and a subset $\Omega_0 = \Omega_0(x, \delta, A, {\bf v}, \eps ) \subset \Omega^{d}$ with $\PP^{\otimes d}(\Omega_0) > \eta_0$ such that for every $(\omega^1, \ldots, \omega^d) \in \Omega_0$:
	\begin{enumerate}[leftmargin=*]
		\item \label{itm enoughlaminations}  
		for every $1 \leq i \leq d$, there is an  $(\omega^i, \fD)$--stably laminated  Borel subset 
        $K_i \subset B_{\delta}(x)$   
    that is  $\eps$-aligned with $v_i$, 
		\item \label{itm coverbigmeasure} $\DS \vol\left(A \bigcap \bigcap_{i = 1}^{d} K_i\right) > \eta_0 \vol(A)$. 
	\end{enumerate}
\end{proposition}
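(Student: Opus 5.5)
Fix $\fC$ and $\delta_1$ from Corollary \ref{lem:pesin_blocks1} with a small $\eps_1$. Choose $\fD \ge \fC$, large enough that Proposition \ref{prop SMT} makes $W^{ss}(\omega,y)$ $\fD$-good at every $y\in\cR(\omega,\fC)$. Let $\theta_0=\theta_1/2$, where $\theta_1$ comes from Corollary \ref{cor:pw_nondegeneracy}. The plan is a Fubini and pigeonhole argument on the product space $A\times\Omega^d$. First, apply Corollary \ref{cor:pw_nondegeneracy} and Corollary \ref{lem:pesin_blocks1} to each $y\in A$ separately. For $\vol$-a.e.\ $y\in A$ this gives a set $G_y\subset\Omega^d$ with $\PP^{\otimes d}(G_y)\ge \theta_1-d\eps_1\ge\theta_1/2$ such that, for $(\omega^i)\in G_y$, three things hold. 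Each $y\in\cR(\omega^i,\fC)$. Each $W^{ss}(\omega^i,y)$ is $\fD$-good at $y$ and $\fD$-properly crosses $B_\delta(x)$. Finally, $|\det(E^{ss}(\omega^1,y),\ldots,E^{ss}(\omega^d,y))|>\theta_1$.

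Second, discretize directions. Cover $T^1_xM$ by $N(\eps)\approx \eps^{-(d-1)}$ caps of angular radius $\eps/4$, using the exponential chart at $x$ to transport directions at $y\in B_\delta(x)$ to $T_xM$; the error is tiny once $\delta_0$ is small. Each $(y,\boldsymbol\omega)$ with $\boldsymbol\omega\in G_y$ gets a $d$-tuple of cap centers ${\bf c}(y,\boldsymbol\omega)$. There are at most $N(\eps)^d$ such tuples, so Fubini gives a tuple ${\bf v}=(v_1,\ldots,v_d)$ with
\[
(\vol\otimes\PP^{\otimes d})\{(y,\boldsymbol\omega): \boldsymbol\omega\in G_y,\ {\bf c}(y,\boldsymbol\omega)={\bf v}\}\ge \frac{\theta_1}{2N(\eps)^d}\vol(A).
\]
Because the directions are perturbed only by about $\eps$, we have $|\det(v_1,\ldots,v_d)|>\theta_0$ when $\eps$ is small. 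For larger $\eps$, replace $\eps$ by $\min(\eps,c\theta_1)$, which only shrinks $\eta_0$. By Fubini again, the set $\Omega_0$ of $\boldsymbol\omega$ whose $y$-slice $S(\boldsymbol\omega)\subset A$ has $\vol(S(\boldsymbol\omega))\ge \frac{\theta_1}{4N^d}\vol(A)$ satisfies $\PP^{\otimes d}(\Omega_0)\ge\frac{\theta_1}{4N^d}$. Set $\eta_0=\theta_1/(4N(\eps)^d)$.

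Third, build the laminations. For $\boldsymbol\omega\in\Omega_0$, put $S_i=S(\boldsymbol\omega)$ and let $K_i$ be the union of the local leaves $(W^{ss}(\omega^i,y)\cap B_\delta(x))_y$ over $y\in S_i$. Then $K_i\supset S(\boldsymbol\omega)$, so $A\cap\bigcap_i K_i\supset S(\boldsymbol\omega)$ has volume at least $\eta_0\vol(A)$. If strict inequality is needed, halve $\eta_0$. Each $K_i$ is $(\omega^i,\fD)$-stably laminated by construction. Measurability of $S(\boldsymbol\omega)$ follows from the measurable dependence of $E^{ss}$ and $\cR$.

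The main obstacle is the alignment condition: every $z\in K_i$, not just the base points $y$, must have $E^{ss}(\omega^i,z)$ within $\eps$ of $v_i$. At $z$ the direction is tangent to the leaf through $y$. Since the leaf is $\fD$-good, its $C^2$ norm is at most $\fD$, so within $B_\delta(x)$ the tangent direction deviates from $E^{ss}(\omega^i,y)$ by $O(\fD\delta)$. Choosing $\delta_0\ll\eps/\fD$ makes this at most $\eps/4$. This step also requires the leaf to be a graph over the whole crossing; I would handle that by taking $\delta_0<1/\fD$ and checking that the $\fD$-good graph of radius $1/\fD$ covers the connected component of $W^{ss}\cap B_\delta(x)$ through $y$. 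If it does not, I would shrink $\delta_0$ further relative to $\fD$. The total alignment error is then $\eps/4$ (cap) plus $\eps/4$ (leaf curvature) plus the chart error, which is below $\eps$.
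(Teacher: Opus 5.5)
Your proposal is correct and takes essentially the paper's route. You combine Corollaries \ref{lem:pesin_blocks1} and \ref{cor:pw_nondegeneracy} with Proposition \ref{prop SMT} to get, for a.e.\ $y\in A$, a good set of $d$-tuples of probability $>\theta_1/2$; you pigeonhole over finitely many direction tuples and use Fubini to extract $\Omega_0$; you take $K_i$ to be the union of local leaves through the slice; and alignment comes from $\delta_0\ll\eps/\fD$. The only cosmetic difference is that the paper takes its $\eps/2$-net inside $H(\theta_1)$, so the determinant bound on ${\bf v}$ is automatic, whereas your product of caps forces you to shrink $\eps$ relative to $\theta_1$ to get $\theta_0=\theta_1/2$.
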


\begin{proof}
Let $\theta_1$ be given by Corollary \ref{cor:pw_nondegeneracy}.
By Corollary \ref{lem:pesin_blocks1} (with $\theta_1/(2d)$ in place of $\eps$),  there is $\fC > 0$ such that for $\vol$-a.e.\ $y$,
\begin{align} 
 \label{eq PPd>1-theta/2} 
 \PP^{\otimes d}\Big ( (\omega^1, \cdots, \omega^d) | &  y  \in  \cR(\omega^i, \fC) \mbox{ and  
 $W^{ss}(\omega^i, y)$ $\fC$-properly   } 
 	\mbox{crosses $B(x, \delta)$ $\forall 1 \leq i \leq d$} \Big ) \!>\! 1 \!-\! \frac{\theta_1}{2}. 
\end{align}

	By Proposition \ref{prop SMT}, there is $\fD \geq \fC$ such that  $W^{ss}(\omega, y)$ is $\fD$-good at $y$ once $y \in \cR(\omega, \fC)$.
Given $\eps > 0$, by letting $\delta_0$ be sufficiently small (depending on $\fD$ and $\eps$), the following is clear:   for every $x \in M$, for $\PP$-a.e.~$\omega$, for $\vol$-a.e.~$y \in B_{\delta}(x)$ such that $W^{ss}(\omega, y)$ is $\fD$-good at $y$, we automatically have that 
$W^{ss}(\omega, y)$ is $\eps/2$-aligned with $E^{ss}(\omega, y)$ in $B_{\delta}(x)$ (see Definition \ref{def goodandaligned}).
From now on we let $\delta \in (0, \delta_0)$ and consider $A \subset B_{\delta}(x)$ with $\vol (A) > 0$.
We will measure the angle between different vectors using the exponential chart at $x$.

For  $y \in A$, we denote by $\wt{\Omega}_y$ the set of $(\omega^1, \ldots, \omega^d) \in \Omega^d$ such that \smallskip

$\abs{\det(E^{ss}(\omega^1,y),\ldots,E^{ss}(\omega^d,y))}>\theta_1$ and   for every $i \in \{1, \ldots, d\}$,  
$y  \in  \cR(\omega^i, \fC)$ and   $W^{ss}(\omega^i, y)$ is $\fD$-good at $y$, $\fD$-properly  crosses 
$B(x, \delta)$, and is  $\eps/2$-aligned with $E^{ss}(\omega^i, y)$ in $B_{\delta}(x)$. \smallskip

By  \eqref{eq PPd>1-theta/2}, Corollary \ref{cor:pw_nondegeneracy},  and the discussion above,  for $\vol$-a.e.~$y \in A$,
\ary \label{eq PP1}
  \PP^{\otimes d}(\wt{\Omega}_y)  > \theta_1 /2. 
\eary
 Denote 
$\DS 
Q\!\!:=\!\!\{(y,\omega^1,\ldots,\omega^d) \!\!\in\!\! A \times \Omega^{d} | (\omega^1,\ldots,\omega^d)\in \wt{\Omega}_y \}.
$
By \eqref{eq PP1},  $(\vol \otimes \PP^{\otimes d})(Q) > \theta_1 \vol(A) / 2$.

Let $H(\theta_1)$ be the subset ${\bf v} \!\!=\!\! (v_1, \ldots, v_d) \in (T_x^1 M)^d$ such that   
$\abs{\det(v_1,\ldots,v_d)}\!\!\geq\!\! \theta_1$. 
Let $J$ be a $\eps/2$-net in $H(\theta_1)$ with respect to the Riemannian metric on $ (T_x^1 M)^d$. 
 Then we have a map $j\colon Q\to J$ assigning $(E^{ss}(\omega^1, y),\ldots,E^{ss}(\omega^d, y))$ to an element ${\bf v} = (v_1, \ldots, v_d) \in J$ such that $\DS \max_{1 \leq i \leq d} \angle(v_i, E^{ss}(\omega^i, y)) < \eps/2$. 
 By the Pigeonhole Principle,  there is ${\bf v} = (v_1, \ldots, v_d) \in J$ such that 
 $$ ( \vol \otimes \PP^{\otimes d} ) ( j^{-1}({\bf v}) ) >  \vol (A) \theta_1 / (2 |J|).$$
 Let $\eta_0 = \theta_1/(4|J|)$.
 Then there is a subset $\Omega_0 \subset \Omega^d$ such that $\PP^{\otimes d} (\Omega_0) >  \eta_0$ and for each $\hat\omega = (\omega^1, \ldots, \omega^d) \in \Omega_0$, we have $\vol(A_{\hat\omega}) > \eta_0 \vol(A)$ where
 \[
A_{\hat\omega}  = \{  y \in A \mid  (y, \omega^1, \ldots, \omega^d) \in j^{-1}({\bf v}) \} \subset 
\bigcap_{i = 1}^{d} \cR(\omega^i, \fC).
 \]
 For $\hat\omega \in \Omega_0$ and $1 \leq i \leq d$, let
 $\DS
 K_i = \bigcup_{y \in A_{\hat\omega}} ( W^{ss}(\omega^i, y) \bigcap B_{\delta}(x) )_y.
 $
 
 Item \eqref{itm enoughlaminations} follows immediately from the construction and Definition \ref{def goodandaligned}.
 Item \eqref{itm coverbigmeasure} holds because 
 $\DS A_{\hat\omega} \subset \bigcap_{i = 1}^{d} K_i$ and $\vol(A_{\hat\omega}) > \eta_0 \vol(A)$.
\end{proof}

\section{Some facts about Sobolev spaces}\label{sec:geometric_measure_theory}

We need some facts about the sets whose indicator is in $H^s$. We begin with the case of functions on $\R^d$, which is a bit simpler and already contains the main points. See \cite[Ch.I]{triebel1992theory} for a historical overview that encompasses everything that we mention below. See also \cite{triebel2010theory}. 
For small $s>0$, the $H^s$ norm in $\R^d$ is equivalent to the following norm
(\cite[Thm.~7.47]{adams2003sobolev}):
\begin{equation}\label{eqn:Hs_defn}
\|f\|_{H^s}^2=\|f\|_{L^2}^2+ \iint \frac{\abs{f(x+t)-f(x)}^2}{\abs{t}^{d+2s}}\,dt\,dx.
\end{equation}
Note that if we are considering $f=1_A$, the indicator function of a set $A$, then the exponent $2$ does not matter because the numerator takes only 
values zero and one.
Thus 
$$ 
\|1_A\|_{H^s}^2=\mathrm{vol}(A)+ \iint \frac{\abs{1_A(x+t)-1_A(x)}}{\abs{t}^{d+2s}}\,dt\,dx.
$$
A similar definition works on closed Riemannian manifolds.
In particular, we will use the following norm which according to
\cite[Equation (2) and Lemma 2.21]{caselli2024fractional} is equivalent to the standard
Sobolev norm on $M$,
\begin{equation}
\label{SobolevManifold}
\|u\|_{H^s}^2=\|u\|_{L^2}^2+\iint_{M\times M} \frac{|u(x)-u(y)|^2}{\dist(x,y)^{d+ 2s}} (\vol \otimes \vol)(dx dy) ,   \end{equation}
where $\dist$ is the Riemannian distance on $M.$

We will need to restrict a set $A$ with $1_{A} \in H^s$ to an exponential chart.
Restricting $1_A$  to a ball in a chart cannot increase the $H^s$ norm  by  more than a bounded amount, provided $s < 1/2$. This is essentially because for such $s$ the family of functions $1_{B_{\delta}(0)}$ for $\delta<1$ has a uniform bound on its $H^s$ norm.  
\begin{proposition}\label{prop:H_s_cutoff}
Let $M$ be a connected, closed smooth Riemannian manifold of dimension $d$. Let $s \in [0, 1/2)$.
\begin{enumerate}
\item\label{item:Hs_loc_bound}
There exist $\delta_0, C_1 > 0$ such that   if 
$A\subseteq M$ is a set with $1_A\in H^s$,
then for any 
$z\in M$, any $0 < \delta<  \delta_0$, we have
\[
\|1_A1_{B_{\delta}(z)}\|_{H^s} \le \|1_A\|_{H^s}+C_1. 
\]

\item\label{item:Rd_hs_bound}
The same bound holds for  $A\subset \R^d$ and for $B^d(0, \delta)$ in place of $B_{\delta}(x)$. 

\item\label{item:Hs_chart_bound} 
Let $\phi \colon U \to V$ be a diffeomorphism between an open domain in $\R^d$ and an open set of $M$.  Then there exists a constant $C_2 > 0$ depending only on $\| \phi \|_{C^1}, \| \phi^{-1} \|_{C^1}$, such that for any $A\subset M$ with $1_A\in H^s(M)$, for any open set $B_{\delta}(x) \subset V$, we have 
$$ 
\| 1_{B_\delta(x) \cap A} \circ \phi \|_{H^s(\R^d)} \le C_2 \|1_A\|_{H^s(M)}+C_2.
$$ 
\end{enumerate}
\end{proposition}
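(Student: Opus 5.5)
The plan is to work throughout with the Gagliardo-type seminorm \eqref{SobolevManifold} (and \eqref{eqn:Hs_defn} on $\R^d$). Write $u=1_A$, $w=1_{B}$ with $B=B_\delta(z)$, so that $1_A1_B=uw$. The $L^2$ part is trivially bounded by $\|1_A\|_{L^2}$. For the double integral we use the pointwise identity
\[
|u(x)w(x)-u(y)w(y)|\le |u(x)-u(y)|\,w(x)+u(y)\,|w(x)-w(y)|.
\]
Since all values lie in $\{0,1\}$, the left side is $0$ or $1$. When it equals $1$, exactly one of the two summands on the right is nonzero, except in the configuration $x\in B$, $y\notin B$, $x\notin A$, $y\in A$, where both are $1$. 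Squaring is therefore harmless, and we obtain
\[
|uw(x)-uw(y)|^2\le |u(x)-u(y)|^2+|w(x)-w(y)|^2 .
\]
Integrating against $\dist(x,y)^{-d-2s}$ gives $[uw]^2\le[u]^2+[w]^2$. Hence $\|1_A1_B\|_{H^s}\le \|1_A\|_{H^s}+\|1_B\|_{H^s}$, using $\sqrt{a+b}\le\sqrt a+\sqrt b$. This reduces item (1) to the uniform bound $\sup_{z,\,\delta<\delta_0}\|1_{B_\delta(z)}\|_{H^s}\le C_1$, and likewise for item (2).

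For item (2), scaling in $\R^d$ gives $[1_{B(0,\delta)}]^2=\delta^{d-2s}[1_{B(0,1)}]^2$. This is bounded for $\delta<1$ provided $[1_{B(0,1)}]<\infty$. To check finiteness, write
\[
[1_{B(0,1)}]^2=2\int_{B(0,1)}\int_{|y|>1}|x-y|^{-d-2s}\,dy\,dx\lesssim \int_{B(0,1)}(1-|x|)^{-2s}\,dx,
\]
which is finite exactly because $2s<1$. This is the only place the hypothesis $s<1/2$ enters.

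For item (1) on $M$, choose $\delta_0$ below the injectivity radius so that exponential charts on $B_{2\delta_0}(z)$ are uniformly bi-Lipschitz, with uniformly bounded Jacobians, for all $z$. Split the integral for $[1_{B_\delta(z)}]^2$ according to whether $y\in B_{2\delta_0}(z)$ or not:
\begin{itemize}
\item Pairs with $y\in B_{2\delta_0}(z)$: transfer to the chart, where distances and volumes change by bounded factors, and apply the Euclidean computation above, obtaining a bound $\lesssim \delta^{d-2s}$.
\item Pairs with $y\notin B_{2\delta_0}(z)$: here $\dist(x,y)\ge\delta_0$, so the contribution is at most $\delta_0^{-d-2s}\vol(B_\delta)$.
\end{itemize}
Both pieces are uniformly bounded, which proves item (1).

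For item (3), first apply item (1) to get $\|1_{A\cap B_\delta(x)}\|_{H^s(M)}\le\|1_A\|_{H^s}+C_1$; if $\delta\ge\delta_0$, instead use the trivial bound by $\|1_A\|_{H^s}$ plus the seminorm of the ball, handled by the same splitting. Let $v=1_{A\cap B_\delta(x)}\circ\phi$, extended by zero outside $U$; it is supported in $\phi^{-1}(B_\delta(x))\subset U$. Split the double integral on $\R^d\times\R^d$ into two regions:
\begin{itemize}
\item Both points in $U$: change variables by $\phi$. Since $|\phi(a)-\phi(b)|$ and $|a-b|$ are comparable via $\|\phi\|_{C^1}$ and $\|\phi^{-1}\|_{C^1}$, and the Jacobians are bounded, this piece is bounded by a constant times the manifold seminorm.
\item One point outside $U$: the integrand is $\le 1_{\phi^{-1}(B_\delta(x))}(a)\,|a-b|^{-d-2s}$ with $b\notin U$. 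Then $\int_{b\notin U}|a-b|^{-d-2s}\,db\lesssim \operatorname{dist}(a,\partial U)^{-2s}$.
\end{itemize}

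The main obstacle is this boundary term. I would control it by comparing $\operatorname{dist}(a,\partial U)$ with the distance from $a$ to the complement of the support of $v$. That comparison reduces the term to the ball computation of item (2), which is finite since $2s<1$. This step may require shrinking to $B_\delta(x)$ with some margin inside $V$; if the paper's constants allow it, I would assume $B_\delta(x)$ is compactly contained in $V$, with $C_2$ permitted to depend on that margin through the $C^1$ bounds.
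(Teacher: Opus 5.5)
Your proposal is correct and follows the paper's route: the pointwise inequality for products of indicators gives $[1_A1_B]^2\le[1_A]^2+[1_B]^2$, which reduces (1) and (2) to a uniform $H^s$ bound for indicators of small balls (finite since $2s<1$), and (3) then follows from (1) plus a bi-Lipschitz change of variables. You supply details the paper leaves implicit, such as the scaling computation and the extension-by-zero boundary term in (3); that boundary term needs no extra margin, since $B_\delta(x)$ is a closed ball inside $V$, so the segment from $a$ to the nearest point of $\partial\phi^{-1}(B_\delta(x))$ stays in $U$, and hence the distance from $a$ to $\R^d\setminus U$ is at least a constant times $\delta-\dist(\phi(a),x)$.
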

\begin{proof} 
We will start with proving \eqref{item:Rd_hs_bound}, \eqref{item:Hs_loc_bound} is similar assuming that $\delta_0<\delta_{inj}$.
Denoting $\cB=B_\delta(z)$ for brevity
we have
\begin{align*}
\abs{1_A(y) 1_{\cB}(y)-1_A(x) 1_\cB(x)}&=
\abs{(1_A(y) 1_{\cB}(y)-1_A(y) 1_{\cB}(x))+(1_A(y) 1_\cB(x)-
1_A(x) 1_\cB(x))}
\\
&\leq \abs{1_\cB(y)-1_\cB(x)}+\abs{1_A(y)-1_A(x)}.
\end{align*}
Hence
\[\|1_A1_\cB \|_{H^s}^2
\leq 2+ \iint \frac{\abs{1_\cB(y)-1_\cB(x)}}{|x - y|^{d+2s}} dxdy +
\iint \frac{\abs{1_A(y)-1_A(x)}}{|x-y|^{d+2s}} dx dy.
\]
Since $1_\cB\in H^s$ for $s<1/2$, 
the first term is bounded proving \eqref{item:Rd_hs_bound}. 

Part \eqref{item:Hs_chart_bound} follows from parts \eqref{item:Hs_loc_bound} and \eqref{item:Rd_hs_bound} and the fact that $H^s$ is preserved by Lipschitz maps 
(which is apparent from \eqref{SobolevManifold}).
\end{proof}

The proof of the main theorem will crucially use the following bound, which gives a quantitative estimate on the density points of a set.
\begin{lemma}\label{lem:H_s_quantitative_density}
There exists $C_0>0$ depending only on $d$ such that for any $s \in (0, 1)$,
if $A$ is a bounded Borel subset of $\R^d$, of volume at most $1$, and $1_{A}\in H^s$,
then 
\begin{equation}
\label{ConvAppr}
\|1_A-1_A*\psi_{\delta}\|_{L^1}\le C_0 \delta^{2s}\|1_A\|^2_{H^s},
\end{equation}
where $\psi_{\delta}$ is the normalized indicator function of $B^d(0,\delta)$, i.e.\ $\psi_{\delta}=1_{B^d(0,\delta)}/\vol(B^d(0,\delta))$. 
\end{lemma}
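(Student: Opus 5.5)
We start by writing the convolution difference pointwise as an average of differences. For every $x\in\R^d$,
\[
1_A(x)-1_A*\psi_\delta(x)=\frac{1}{\vol(B^d(0,\delta))}\int_{B^d(0,\delta)}\bigl(1_A(x)-1_A(x-t)\bigr)\,dt .
\]
Taking absolute values inside, integrating in $x$ and applying Fubini gives
\[
\|1_A-1_A*\psi_\delta\|_{L^1}\le \frac{1}{\vol(B^d(0,\delta))}\int_{B^d(0,\delta)}\int_{\R^d}\abs{1_A(x-t)-1_A(x)}\,dx\,dt .
\]
Since $1_A$ only takes the values $0$ and $1$, the absolute value of the difference equals its square, so no Cauchy--Schwarz step is needed. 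This is the point where the indicator structure is used.

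Next we compare this with the Gagliardo seminorm in \eqref{eqn:Hs_defn}. For $\abs{t}\le\delta$ we have $1\le \delta^{d+2s}\abs{t}^{-d-2s}$. Inserting this weight, and using $\vol(B^d(0,\delta))=c_d\delta^d$, the right-hand side is bounded by
\[
\frac{\delta^{d+2s}}{c_d\,\delta^{d}}\iint_{\abs{t}\le\delta}\frac{\abs{1_A(x+t)-1_A(x)}}{\abs{t}^{d+2s}}\,dt\,dx\;\le\; c_d^{-1}\,\delta^{2s}\,\|1_A\|_{H^s}^2 .
\]
Here the sign change $t\mapsto -t$ is harmless because the kernel is even. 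Thus \eqref{ConvAppr} holds with $C_0=c_d^{-1}$, which depends only on $d$.

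The only subtle point is the constant. Under the equivalence of norms \cite[Thm.~7.47]{adams2003sobolev}, constants could depend on $s$. To keep $C_0$ depending only on $d$, we take $\|\cdot\|_{H^s}$ to be literally the expression \eqref{eqn:Hs_defn}, as the paper does in the display for $\|1_A\|^2_{H^s}$. With that convention the seminorm is dominated by the norm, and the argument has no real obstacle. The hypotheses that $A$ is bounded with $\vol(A)\le 1$ are not needed for this estimate; they only ensure that all quantities are finite.
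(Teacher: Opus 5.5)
Your proof is correct and is essentially the paper's argument: write the difference as an average over $B^d(0,\delta)$, apply Fubini, insert the weight $\delta^{d+2s}|t|^{-d-2s}\ge 1$ on $|t|\le\delta$, and bound by the Gagliardo seminorm to get $C_0=c_d^{-1}$. Your remark that $C_0$ depends only on $d$ only if $\|\cdot\|_{H^s}$ is read as the literal expression \eqref{eqn:Hs_defn}, rather than an equivalent norm with $s$-dependent constants, is a fair clarification that the paper leaves implicit.
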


\begin{proof}
Let $c_d = \vol(  B^d(0,1))$. We get
\begin{align*}
	&\|1_A-1_A*\psi_{\delta}\|_{L^1}\le \int_{\R^d} \int_{B^d(0,\delta)}
\frac{\abs{1_A(x)-1_A(x+t)}}{c_d \delta^d} dt dx 
\end{align*}
\hskip23mm $\DS 
\leq c_d^{-1} \iint \frac{\abs{1_A(x)-1_A(x+t)}}{|t|^{d+2s}}
\delta^{2s} dxdt\leq c_d^{-1} \delta^{2s} \|1_A\|_{H^s}^2.
$
\end{proof}

 Given a set $A$ we call a point $x\in A$ {\em  a point of density $c_0$ at scale $\delta$ of $A$} if $$\vol(A\cap B_{\delta}(x))/\vol(B_{\delta}(x)) >  c_0.$$ 
 We denote the set of $x$ satisfying the above inequality by $A_{\delta, c_0}.$

 \begin{corollary}
 \label{CrHD} 
 Let $s \in (0, 1)$. There exists $C>0$ such that if $A \subset \R^d$ is a positive volume subset with $1_A\in H^s(\R^d)$ then 
$\DS \vol(A\setminus A_{\delta, 1-\delta^s})\leq C\delta^s \|1_A\|_{H^s}^2.$ 
In particular,
$A_{\delta, 1-\delta^s}\neq \emptyset$ if $\delta$ is sufficiently small.
The same statement holds for $A \subset M$ with $1_A \in H^s(M)$.
 \end{corollary}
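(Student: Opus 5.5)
The plan is a Chebyshev (Markov) argument applied to Lemma \ref{lem:H_s_quantitative_density}. The key observation is that $1_A*\psi_\delta(x)$ is exactly the density $\vol(A\cap B^d(x,\delta))/\vol(B^d(x,\delta))$. So a point $x\in A$ that is \emph{not} in $A_{\delta,1-\delta^s}$ satisfies $1_A*\psi_\delta(x)\le 1-\delta^s$, which means $|1_A(x)-1_A*\psi_\delta(x)|\ge \delta^s$. This gives the pointwise inequality
\[
\delta^s\, 1_{A\setminus A_{\delta,1-\delta^s}}\le |1_A-1_A*\psi_\delta|.
\]

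Integrating and using \eqref{ConvAppr} yields
\[
\vol(A\setminus A_{\delta,1-\delta^s})\le \delta^{-s}C_0\delta^{2s}\|1_A\|_{H^s}^2=C_0\delta^s\|1_A\|_{H^s}^2 .
\]
Lemma \ref{lem:H_s_quantitative_density} assumes $\vol(A)\le 1$ and $A$ bounded. The constant there never actually uses these hypotheses, since the displayed proof is just Fubini. If one insists on them, one first applies the argument to $A\cap B^d(0,R)$ and lets $R\to\infty$, or notes that the estimate is local. For the "in particular" clause: once $C\delta^s\|1_A\|_{H^s}^2<\vol(A)$, the set $A_{\delta,1-\delta^s}$ has positive measure, hence is nonempty. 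This holds for all sufficiently small $\delta$ because $\vol(A)>0$.

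For the manifold version I expect the only real work, and it is mild. The most direct route is to redo the proof of Lemma \ref{lem:H_s_quantitative_density} on $M$ with the norm \eqref{SobolevManifold}. Let $\psi_\delta^M(x,\cdot)=1_{B_\delta(x)}/\vol(B_\delta(x))$. For $\delta<\delta_{inj}$ we have $\vol(B_\delta(x))\ge c\delta^d$ uniformly in $x$ by compactness. Then
\[
\int_M |1_A(x)-\textstyle\int 1_A\,\psi^M_\delta(x,\cdot)|\,dx\le c^{-1}\delta^{-d}\iint_{\dist(x,y)\le\delta}|1_A(x)-1_A(y)|\le c^{-1}\delta^{2s}\iint\frac{|1_A(x)-1_A(y)|}{\dist(x,y)^{d+2s}},
\]
which is bounded by $c^{-1}\delta^{2s}\|1_A\|_{H^s}^2$. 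The same Chebyshev step then concludes, with $C$ depending on $M$. For $\delta\ge\delta_{inj}$ the bound is trivial after enlarging $C$, since the left side is at most $1$ and $\|1_A\|_{H^s}^2\ge\vol(A)$. An alternative would be to localize with Proposition \ref{prop:H_s_cutoff} to charts, but that introduces chart distortion of the balls, so I would avoid it.
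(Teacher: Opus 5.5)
Your proposal is correct. On $\R^d$ it is the paper's argument: the pointwise bound $1_A-1_A*\psi_\delta\ge\delta^s$ on $A\setminus A_{\delta,1-\delta^s}$, then Markov's inequality with \eqref{ConvAppr}. You are also right that the boundedness and $\vol(A)\le 1$ hypotheses of Lemma~\ref{lem:H_s_quantitative_density} play no role in its proof.

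Where you differ is the manifold case. The paper localizes: it takes a finite atlas with a partition of unity and applies the Euclidean statement in each chart. It then handles the fact that the image of $B_\delta(x)$ is not a round ball by covering it with boundedly many round balls of comparable radius. Implicitly, this means adjusting the density threshold and the scale by multiplicative constants.

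You instead rerun the Fubini computation of Lemma~\ref{lem:H_s_quantitative_density} intrinsically on $M$. You use the Gagliardo-type norm \eqref{SobolevManifold} and the uniform lower bound $\vol(B_\delta(x))\ge c\delta^d$. This is shorter and cleaner. It never compares Riemannian balls with Euclidean ones, and it avoids the awkwardness that multiplying $1_A$ by partition-of-unity functions destroys the indicator structure used in the Euclidean lemma. The only external input is the norm equivalence for \eqref{SobolevManifold}, which the paper already quotes from the literature. The paper's route has the modest advantage of reusing the Euclidean lemma verbatim.

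One small slip: for $\delta\ge\delta_{inj}$, the inequality you need is
\[
\vol(A\setminus A_{\delta,1-\delta^s})\le\vol(A)\le\|1_A\|_{H^s}^2,
\]
not ``the left side is at most $1$.'' The conclusion then follows once $C\delta_{inj}^s\ge 1$.
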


\begin{proof}
First let us assume $A \subset \R^d$. If $x\in A\setminus A_{\delta, 1-\delta^s}$ then 
$\DS 1_A-1_A*\psi_{\delta}\geq \delta^s$. Now the result follows from \eqref{ConvAppr} and Markov's inequality.

Now assume we have $A \subset M$ with $1_A \in H^s(M)$. 
Choose a partition of unity on $M$ subordinate to a finite covering by charts in $\R^d$. We can deduce the required estimate by applying the above statement for $\R^d$ to each chart. While the image of $B_{\delta}(x)$ in a chart may not correspond to round ball, it can be covered by a bounded number of round balls of radius comparable to $\delta$ due to the finiteness of charts. 
\end{proof}

  The following lemma says that if $1_A$ is in $H^s(\R^d)$ and $\wt{A}$ fills a significant portion of $A$, then  $\wt{A}$ has many points of relatively high density at small scales.

    \begin{lemma}\label{lem:contained_H_s_density_points}
    Fix $C>0$, $s \in (0, 1)$, $c_1>0$. Then there exists $c_0>0$ such that
    if $A\subset   \R^d$ is a   bounded Borel subset with $\|1_A\|_{H^s} \leq C$  and volume at most $1$, then the following holds.
     Suppose that $\wt{A}$ is a Borel subset of $A$ with $\vol(\wt{A})>c_1\vol(A)$.
     Then
    $ 
    \vol(\wt{A}_{\delta^{2d/s},c_0})>  \vol(\wt{A}) / 2 -   c_0^{-1} \delta^{2d}. 
    $
    \end{lemma}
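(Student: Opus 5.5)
The plan is a dyadic-cube (Calderón–Zygmund type) decomposition at scale $r=\delta^{2d/s}$, combined with a local Poincaré-type bound that comes directly from the $H^s$ seminorm as in the proof of Lemma \ref{lem:H_s_quantitative_density}. Tile $\R^d$ by a grid $\mathcal{Q}$ of closed cubes of side $r/\sqrt d$. Each such cube has diameter $r$, so $Q\subset B^d(x,r)$ for every $x\in Q$. Put $\beta=c_1/4$. Split the cubes into three classes:
\begin{itemize}
\item $\mathcal{Q}_1$: cubes with $\vol(A\cap Q)<\tfrac12\vol(Q)$;
\item $\mathcal{Q}_2$: cubes with $\vol(A\cap Q)\ge \tfrac12 \vol(Q)$ but $\vol(\wt A\cap Q)<\beta\vol(Q)$;
\item $\mathcal{Q}_3$: all remaining cubes.
\end{itemize}

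\textbf{Density in good cubes.} If $x\in\wt A\cap Q$ with $Q\in\mathcal{Q}_3$, then
\[
\vol(\wt A\cap B^d(x,r))\ge\vol(\wt A\cap Q)\ge \beta (r/\sqrt d)^d = \beta\, d^{-d/2} c_d^{-1}\vol(B^d(x,r)).
\]
So with $c_0\le \beta d^{-d/2}c_d^{-1}/2$, every such $x$ lies in $\wt A_{r,c_0}$. It therefore suffices to bound the mass of $\wt A$ lying in $\mathcal{Q}_1\cup\mathcal{Q}_2$.

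\textbf{Class $\mathcal{Q}_2$.} The cubes in $\mathcal{Q}_2$ satisfy $\sum_{Q\in\mathcal Q_2}\vol(Q)\le 2\vol(A)$. Hence
\[
\vol\Big(\wt A\cap\bigcup_{\mathcal{Q}_2}Q\Big)\le 2\beta\vol(A)<2\beta c_1^{-1}\vol(\wt A)=\vol(\wt A)/2 .
\]

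\textbf{Class $\mathcal{Q}_1$ (the main step).} Here I use the $H^s$ control. Write $a_Q$ for the average of $1_A$ over $Q$. For $Q\in\mathcal{Q}_1$ we have $a_Q<1/2$, so $1_A-a_Q\ge 1/2$ on $A\cap Q$. This gives
\[
\vol(A\cap Q)\le 2\int_Q|1_A-a_Q| \le \frac{2}{\vol(Q)}\int_Q\int_Q|1_A(x)-1_A(y)|\,dx\,dy .
\]
For $x,y\in Q$ we have $|x-y|\le r$, so $1\le r^{d+2s}|x-y|^{-d-2s}$, and $\vol(Q)=d^{-d/2}r^d$. Summing over the disjoint cubes then yields
\[
\vol\Big(A\cap\bigcup_{\mathcal{Q}_1}Q\Big)\le 2d^{d/2}r^{2s}\iint\frac{|1_A(x)-1_A(y)|}{|x-y|^{d+2s}}\,dx\,dy\le 2d^{d/2}C^2\delta^{4d}.
\]
This is at most $c_0^{-1}\delta^{2d}$ once $c_0$ is small (using $\delta\le1$; for $\delta>1$ the claim is trivial since $\vol(\wt A)\le 1$). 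Grid boundaries are null and can be ignored. Combining the three classes gives $\vol(\wt A_{r,c_0})\ge \vol(\wt A)-\vol(\wt A)/2-c_0^{-1}\delta^{2d}$, and strictness follows from the strict inequality in the $\mathcal{Q}_2$ bound.

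The main obstacle is the $\mathcal{Q}_1$ step. A naive use of Lemma \ref{lem:H_s_quantitative_density} with balls does not directly control low-density cubes, because a small density of $A$ in $Q$ does not bound the density of $A$ in $B^d(x,r)$. The fix is to compare $1_A$ with its average over the cube itself, which is exactly the Poincaré-type estimate above. Only $r^{2s}=\delta^{4d}\le\delta^{2d}$ is needed, so the chosen exponent $2d/s$ leaves room to spare.
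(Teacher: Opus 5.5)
Your proof is correct, and it takes a genuinely different route from the paper's. The paper classifies points using the balls $B_\rho(x)$ centered at the points themselves. It first discards the set $A^{<1/2}$ of points of $A$ whose $A$-density in $B_\rho(x)$ is below $1/2$, which it controls by Lemma~\ref{lem:H_s_quantitative_density} and Markov's inequality. It then discards the set $\wt A^{<2c_0}$ of remaining points where $\wt A$ fills less than a $2c_0$-fraction of $A\cap B_\rho(x)$. That second set is bounded by covering it with $\rho$-balls of bounded multiplicity $C(d)$ (a Vitali-type argument) and summing $\vol(B_\rho(x)\cap A)$ over the cover.

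You instead fix a partition into cubes of diameter $r$, so that each cube lies inside the $r$-ball around any of its points. The $H^s$ input then becomes a local Poincar\'e-type bound on each low-density cube, which is the cube analogue of the proof of Lemma~\ref{lem:H_s_quantitative_density}. The covering step is replaced by the disjointness of the cubes together with $\vol(Q)\le 2\vol(A\cap Q)$ on $\mathcal{Q}_2$.

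What your approach buys:
\begin{itemize}
\item No covering lemma or multiplicity bound is needed.
\item You get the strict inequality $>c_0$ directly, with explicit constants depending only on $d$, $c_1$ and $C$. The paper's inclusion \eqref{eqn:atilde}, as written, only yields density $\ge c_0$, which is harmless but needs a trivial adjustment of $c_0$.
\end{itemize}

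What the paper's approach buys:
\begin{itemize}
\item It reuses Lemma~\ref{lem:H_s_quantitative_density} verbatim.
\item It works with exactly the balls appearing in the definition of $\wt A_{\rho,c_0}$.
\end{itemize}
The geometric price is essentially the same in both: a dimensional constant, $C(d)$ in the paper and $d^{-d/2}c_d^{-1}$ in yours.
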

 
\begin{proof}
\hspace{-.15cm}	Denote $\rho\!=\!\delta^{2d/s}$.
Let $A^{<1/2}$ be the set of points $x\in A$ satisfying 
$\DS \vol(B_{\rho}(x)\cap A)<\frac{\vol(B_{\rho}(x))}{2}$. 
Note that if $x \in A^{<1/2}$, then $ (1_A*\psi_{\rho})(x) < 1/2$, where, as before, 
$\DS \psi_{\rho}=1_{B_{\rho}(0)}/\vol(B_{\rho}(0))$. 
Hence Markov's inequality and Lemma \ref{lem:H_s_quantitative_density} show that 
$$
\vol(A^{<1/2})<2\delta^{2d}C^2  C_0,
$$
where $C_0$ is given by Lemma \ref{lem:H_s_quantitative_density}.

Now let $\wt{A}^{< 2c_0}$ be the set of points $x$ in $\wt{A}\setminus A^{<1/2}$ such that 
\[
\vol(B_{\rho}(x)\cap \wt{A})< 2c_0\vol(B_{\rho}(x)\cap A) . 
\]
By definition, we have 
\begin{equation}\label{eqn:atilde}
\wt{A}\setminus \left[A^{<1/2}\cup \wt{A}^{< 2c_0}\right] \subset \wt{A}_{\rho, c_0}.
\end{equation}  

By the Vitali Covering Lemma, there is a finite covering of $\wt{A}^{<  2c_0}$ with  multiplicity less than $C(d)$  by $\rho$-balls (indexed by $I$) centered at points of $\wt{A}^{< 2c_0}$.
  We can estimate the volume of $\wt{A}^{< 2c_0}$ by using this cover:
\begin{align*}
\vol(\wt{A}^{< 2c_0})&\le \vol\left(\bigcup_{x\in I} B_{\rho}(x)\cap \wt{A}\right) 
< \sum_{x\in I}  2 c_0\vol(B_{\rho}(x)\cap A)  
\le  2 C(d) c_0\vol(A) 
\le  2C(d)c_0c_1^{-1}\vol(\wt{A}) .
\end{align*}
 Thus the set in \eqref{eqn:atilde} has measure at least
$\DS
(1-2 C(d)c_0c_1^{-1})\vol(\wt{A})-2\delta^{2d}C^2 C_0.
$
We conclude the proof by letting $c_0$ be sufficiently small depending on $d$ and $c_1$.
\end{proof}

 The relevance of the results presented above comes from the following proposition.   Recall that a set $A$ is a totally ergodic component of volume if for every power $\mu^{ q}$ of $\mu$, the random dynamics of $\mu^{ q}$ on $\vol\vert_A$ is ergodic. 

\begin{proposition}\label{prop:ergodic_decomposition} 
Suppose that $\mu$ is coexpanding on average (see Definition \ref{DefCoA}). 
Then for some $s>0$, there exist positive integers $l,q$, and disjoint Borel subsets $A_1,\ldots,A_{l}$ such that 
\begin{enumerate}[leftmargin=*]
    \item 
$\vol(A_1\cup\cdots\cup A_l)=1$,
    \item  
    $1_{A_i}\in H^s(M)$ for each $i$, 
    \item 
    Each $A_i$ is   $\mu^q$-almost surely invariant and the restriction of the random dynamical system driven by $\mu^q$ to $A_i$
    is totally ergodic.
\end{enumerate}
\end{proposition}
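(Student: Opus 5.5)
This statement is essentially a repackaging of the results of \cite{dewitt2025conservative}, so the plan is to invoke them and check compatibility with the present setting rather than prove anything from scratch.

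\textbf{Step 1: quasi-compactness.} By \cite[Thm.~1.1]{dewitt2025conservative}, coexpansion on average gives an $s>0$ such that $\cL_\mu$ acting on $H^s(M)$ has essential spectral radius strictly less than $1$. We may take $s<1/2$ small, so that the norm \eqref{SobolevManifold} applies. Since every $f$ in the support of $\mu$ preserves volume, $\cL_\mu$ is a contraction on $L^2$. Hence the peripheral spectrum of $\cL_\mu$ on $H^s$ consists of finitely many eigenvalues of modulus one, each with a finite-dimensional eigenspace lying in $H^s\subset L^2$.

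\textbf{Step 2: invariant sets and the $A_i$.} Take $\phi\in L^2$ with $\cL_\mu\phi=\lambda\phi$ and $|\lambda|=1$. Strict convexity of $L^2$, together with the fact that each $\phi\circ f$ has the same $L^2$ norm, forces $\phi\circ f=\lambda\phi$ for $\mu$-a.e.\ $f$.
\begin{itemize}
\item For $\lambda=1$, every level set of $\phi$ is $\mu$-almost surely invariant, so the invariant $\sigma$-algebra is generated by the finite-dimensional space $E_1=\ker(\cL_\mu-1)$. An invariant $\sigma$-algebra whose indicator functions span a finite-dimensional space is finite, so its atoms give finitely many ergodic components.
\item Indicators of invariant sets lie in $E_1$, and $E_1\subset H^s$ because eigenfunctions of a quasi-compact operator for an eigenvalue outside the essential spectrum lie in the space on which it acts. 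To justify that the $L^2$ eigenspace coincides with the $H^s$ one, I would use the fact (also in \cite{dewitt2025conservative}) that the adjoint of $\cL_\mu$, which is $\cL_{\mu^{-1}}$ by volume preservation, is quasi-compact on $H^{-s}$, and apply duality.
\end{itemize}

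\textbf{Step 3: the power $q$.} The peripheral eigenvalues are finitely many. For each of them, $|\phi|$ is invariant, and on an ergodic component $\lambda$ must be a root of unity: otherwise the functions $\phi^k$ would give infinitely many independent peripheral eigenfunctions. Let $q$ be the least common multiple of their orders. Then every peripheral eigenvalue of $\cL_{\mu^q}=\cL_\mu^q$ equals $1$. Applying Step 2 to $\mu^q$, the ergodic components $A_1,\dots,A_l$ of $\mu^q$ satisfy $1_{A_i}\in H^s$.

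Total ergodicity follows because the eigenvalue-one eigenspace of $\cL_{\mu^{qk}}$ coincides with that of $\cL_{\mu^q}$ for every $k$: the only unimodular eigenvalues of $\cL_{\mu^q}$ equal $1$, so no new fixed vectors appear when passing to powers.

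\textbf{Main obstacle.} I expect the hardest point to be the regularity transfer in Step 2, namely showing that the indicators of invariant sets, which are a priori only in $L^2$, actually lie in $H^s$. The duality argument above is my first attempt; alternatively, \cite[\S7]{dewitt2025conservative} appears to state this directly.
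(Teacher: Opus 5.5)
Your route differs from the paper's and can be completed, but as written the decisive step is not yet an argument. The justification ``eigenfunctions of a quasi-compact operator \dots\ lie in the space on which it acts'' is circular for $L^2$ eigenfunctions, because $\cL_\mu$ is not known to be quasi-compact on $L^2$. Duality by itself only shows that the $L^2$ fixed space is finite-dimensional.

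The missing ingredient is an index count. Since $1$ lies in the unbounded component of the complement of the essential spectrum, $\cL_\mu-I$ is Fredholm of index zero on $H^s$. Its Banach adjoint on $H^{-s}=(H^s)^*$ is, via the pairing $\int\phi\psi\,d\vol$, the extension of $\cL_{\check\mu}-I$, where $\check\mu$ is the image of $\mu$ under $f\mapsto f^{-1}$. Hence $\dim\ker_{H^{-s}}(\cL_{\check\mu}-I)=\operatorname{codim}\operatorname{ran}(\cL_\mu-I)=\dim\ker_{H^s}(\cL_\mu-I)$. It follows that every inclusion in the chain
\[
\ker_{H^s}(\cL_\mu-I)\subset\ker_{L^2}(\cL_\mu-I)\subset\ker_{H^{-s}}(\cL_{\check\mu}-I)
\]
is an equality. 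The second inclusion holds because $\psi\circ f=\psi$ a.e.\ implies $\psi\circ f^{-1}=\psi$ a.e.

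You need the same transfer in two further places.
\begin{itemize}
\item Applied to $\mu^{qk}$, it is what your total ergodicity step requires, since ergodicity is detected by $L^2$ fixed vectors, not $H^s$ ones.
\item In Step 3 the contradiction requires the $\lambda^k$ to be $H^s$ eigenvalues, not merely $L^2$ ones. This follows either from the analogous count for unimodular $\lambda$, applied to $\bar\psi$ (which satisfies $\cL_{\check\mu}\bar\psi=\lambda\bar\psi$), or from $H^s\cap L^\infty$ being an algebra, since $|\phi|$ is constant on the finitely many components.
\end{itemize}
Your fallback is not available: the paper explicitly notes that \cite{dewitt2025conservative} does not state the $H^s$ regularity of the $1_{A_i}$.

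The paper takes the finite decomposition, the power $q$ and total ergodicity directly from \cite[Cor.~7.5]{dewitt2025conservative} and proves only $1_{A_i}\in H^s$. It picks $\phi\in H^s$ with $\int_{A_j}\phi\,d\vol=\delta_{ij}$ and observes that $\frac1N\sum_{n<N}\cL^n\phi$ converges to $\vol(A_i)^{-1}1_{A_i}$ in $L^2$, by the mean ergodic theorem, and to some limit in $H^s$, by the spectral decomposition; the two limits must agree. That argument avoids adjoints and Fredholm theory, but it presupposes finitely many components in order to construct $\phi$. Your version, once the index count is added, obtains finiteness, $q$, total ergodicity and regularity simultaneously from quasi-compactness on $H^s$ alone.
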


\begin{proof}
  The decomposition with required properties is established in
\cite[Corollary 7.5]{dewitt2025conservative} except it is not explicitly stated there that $1_{A_i}$ belongs to $H^s(M)$.
To see this fact, take a function $\phi\!\in\! H^s(M)$ such that 
$\DS \int_{A_i} \!\phi(x) dx\!=\!1$ and 
$\DS \int_{A_j} \! \phi(x) dx\!=\!0$ for $j\neq i$. Let $\cL$ be the generator of the dynamics driven by $\mu^q$, that is
$\DS (\cL \phi)(x)=\int \phi(fx) d\mu^q(f). $
Then 
$\DS \frac{1}{N} \sum_{n=0}^{N-1} \cL^n(\phi)$ converges to $\vol(A_i)^{-1}1_{A_i}$ in $L^2$ due to the ergodic theorem, and it converges to 
some function $\bar\phi$ in $H^s(M)$ due to the spectral decomposition of $\cL$ established in \cite{dewitt2025conservative}.
It follows that $\bar\phi= \vol(A_i)^{-1} 1_{A_i}$ almost surely, and so $1_{A_i}\in H^s(M)$.
\end{proof}

\begin{remark}
The requirement for $C^{\infty}$ maps in this paper stems from the assumptions made by the authors of \cite{dewitt2025conservative} that are used in the proof of  Proposition \ref{prop:ergodic_decomposition}. The argument for proving Main Theorem \ref{thm:main_thm} holds for $C^2$ maps, provided that the conclusions of Proposition \ref{prop:ergodic_decomposition} hold under this lower regularity.
\end{remark}

\begin{remark}\label{rem:density_points_exponential_chart}

To transfer between estimates on the manifold and estimates on Euclidean space, we will often use an exponential chart $\exp_x \colon B^d(0, \delta_{inj}) \to M$ to identify a neighborhood of $x$ in $M$ with a neighborhood of the origin in $\R^d$.

  We will make some harmless abuses of notation relative to an exponential chart $\exp_x$.
    For example,    for a subset $A\subseteq M$, we will also write $A$  for $(\exp_x)^{-1}(A)\cap B^d(0, \delta_{inj})$.
    If $1_A \in H^s(M)$ with $s \in (0, 1/2)$ and $B_{\delta}(y) \subset \exp_x( B^d(0, \delta_{inj}) )$, then by Proposition \ref{prop:H_s_cutoff} the set $A \cap B_{\delta}(y)$, viewed as a subset of $\R^d$ through $\exp_x$, has indicator function with $H^s$-norm bounded by $C(M) (\| 1_{A}\|_{H^s(M)} + 1)$.
   
        We will  use implicitly that all the exponential charts on $M$ have bounded $C^2$-norms, and in particular, have uniformly bounded Jacobians.  For instance, if a curve $\gamma \subset M$ is  $\fC$-good, then $\exp_x^{-1}(\gamma)$ has $C^2$-norm bounded by $C(M) \fC$. If $B_{\delta}(y) \subset \exp_x(B^d(0, \delta_{inj}))$, then 
        the density $\vol(A \cap B_{\delta}(y)) /  \vol( B_{\delta}(y) )$ is comparable to $\vol(\exp_x^{-1}(A \cap B_{\delta}(y))) /  \vol( \exp_x^{-1}(B_{\delta}(y)) )$ up to factor depending only on $M$. For this reason, we may abusively use $\vol$ to denote the volume measure on $\R^d$ as well.
        Finally,  we will use the Euclidean metric to measure the angle between two vectors at different points in $\exp_x(B^d(0, \delta_{inj}))$. This is consistent with the notation in 
        Definition~\ref{def goodandaligned}. 
\end{remark}

\begin{proposition} \label{lem:configuration_of_local_laminations}
		Suppose that $\mu \in \cG$. 
         Let $A$ be a $\mu$-almost surely invariant subset with positive volume. 
		There exist $s \in (0, 1/2)$, $\fD>0$,  $\theta_0>0$ such that  for any $\eps >0$ there exist $\delta_1 \in (0, \delta_{inj})$, $\eta_1 >0$ such that for  any $x\in M$ 
        and any $0<\delta<\delta_1$ such that $\vol(A \cap B_{\delta}(x))\!>\! 0$,  there exist 
          ${\bf v} \!=\!  (v_1,\ldots,v_d )\! \in\! ( T^1_xM )^d$ 
        with $\abs{\det(v_1,\ldots,v_d)}> \theta_0$, and a subset 
        $\Omega_1 \subset \Omega^{d}$ with $\PP^{\otimes d}(\Omega_1) > \eta_1$ such that 
        for every $(\omega^1, \ldots, \omega^d) \in \Omega_1$, we have the following:
	\begin{enumerate}[leftmargin=*]
		\item 
		for every $1 \leq i \leq d$, there is a 
        $(\omega^i, \fD)$-stably 
        laminated Borel subset $K_i\subset B_\delta(x)$ that is  
     $\eps$-aligned with $v_i$ 
     such that $\DS \bigcup_{i = 1}^{d} K_i \subset A$ modulo a volume null set, and
     $\DS K := B_{\delta}(x) \cap A \cap \bigcap_{i = 1}^{d} K_i $  
     satisfies $\vol(K) > \eta_1 \vol(B_{\delta}(x) \cap A)$;
		\item let $\widetilde K $ be the set of points $y \in K$ such that 
		\aryst
		\vol(B_{\delta^{2d/s}}(y) \cap K) > \eta_1 \vol(B_{\delta^{2d/s}}(y)),
		\earyst
		Then $\vol(  \widetilde K ) > \eta_1 \vol(K) -  \eta_1^{-1} \delta^{2d}$.
		In particular, we have 
		\aryst
			\vol(  \widetilde K ) > \eta_1^2 \vol(B_{\delta}(x) \cap A) -  \eta_1^{-1} \delta^{2d}.
		\earyst
		\end{enumerate}
\end{proposition}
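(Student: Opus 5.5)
Part (1) will come from applying Proposition \ref{prop:laminations_exist} to $A\cap B_\delta(x)$, then trimming the laminations so that they lie inside $A$. Part (2) will come from Lemma \ref{lem:contained_H_s_density_points}, with the $H^s$ input supplied by Proposition \ref{prop:ergodic_decomposition}.

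\textbf{Choice of $s$.} Since $A$ is $\mu$-almost surely invariant, it is $\mu^q$-almost surely invariant. Hence, up to a null set, $A$ is a union of some of the totally ergodic components $A_i$ from Proposition \ref{prop:ergodic_decomposition}. This uses Lemma \ref{lem:codimension_1_hyperbolicity_gaps_implies_coexpansion} to get coexpansion on average. It follows that $1_A=\sum 1_{A_i}\in H^s(M)$. Shrinking $s$ if needed, we take $s\in(0,1/2)$ so that Proposition \ref{prop:H_s_cutoff} applies. By Remark \ref{rem:density_points_exponential_chart}, $\|1_{A\cap B_\delta(x)}\|_{H^s}\le C$ in the exponential chart at $x$, with $C$ uniform in $x$ and $\delta$.

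\textbf{Part (1).} Apply Proposition \ref{prop:laminations_exist} to the set $A\cap B_\delta(x)$. This gives $\fD,\theta_0,\delta_0,\eta_0$, a tuple ${\bf v}$, and $\Omega_0$, together with laminated sets $K_i'$ satisfying $\vol(A\cap\bigcap K_i')>\eta_0\vol(A\cap B_\delta(x))$. The sets $K_i'$ need not lie in $A$, so we refine them using Theorem \ref{thm ergodiccomponent}. Fubini over $(\omega^1,\dots,\omega^d)$ shows that for $\PP^{\otimes d}$-almost every tuple and almost every $y\in A$, $m^s_y$-almost all of each $W^{ss}(\omega^i,y)$ lies in $A$. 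Intersecting $\Omega_0$ with this full-measure set gives $\Omega_1$. For such tuples we restrict the base set $S$ of $K_i'$ to the points of $A$ where this leafwise property holds; by the construction of $K_i'$, the removed base points form a null set. The resulting $K_i$ are still $(\omega^i,\fD)$-stably laminated and $\eps$-aligned, because both properties are inherited by subsets of $S$.

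We need $\vol(K_i\setminus A)=0$. Each local leaf through a good base point meets $A^c$ in an $m^s$-null set. Proposition \ref{prop Conditional} says the conditional measures of $\vol$ on leaves are equivalent to $m^s$. Hence $K_i\setminus A$ is $\vol$-null, using the disintegration \eqref{eq decomposition}; note $K_i\subset\Lambda^{ss}$ since $\fD\ge\fC$ and the base points lie in $\cR(\omega^i,\fC)$. Finally, the set $A_{\hat\omega}$ from the proof of Proposition \ref{prop:laminations_exist} loses only a null set, so $\vol(K)>\eta_0\vol(A\cap B_\delta(x))$. Set $\eta_1\le\eta_0$ and $\delta_1\le\delta_0$.

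\textbf{Part (2).} Work in the chart at $x$ and apply Lemma \ref{lem:contained_H_s_density_points} with the set $A\cap B_\delta(x)$ in the role of $A$, $\wt A=K$, and $c_1=\eta_0$. The hypotheses hold: the volume is at most $1$, and the $H^s$ norm is at most $C$ by Proposition \ref{prop:H_s_cutoff}. The lemma gives $c_0$ with $\vol(K_{\delta^{2d/s},c_0})>\vol(K)/2-c_0^{-1}\delta^{2d}$. Here $K_{\delta^{2d/s},c_0}$ is exactly $\widetilde K$ with $c_0$ in place of $\eta_1$.

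The manifold-versus-chart distortion of volumes and balls is bounded by constants depending only on $M$ (Remark \ref{rem:density_points_exponential_chart}). We absorb it by shrinking the density threshold, and by comparing chart balls of radius $\rho$ with manifold balls of comparable radius. Choosing $\eta_1\le\min(\eta_0,c_0/C(M),1/2)$ gives both displayed inequalities of (2). The second inequality follows by combining the first with $\vol(K)>\eta_1\vol(A\cap B_\delta(x))$.

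\textbf{Main obstacle.} The delicate step is in part (1): justifying that the trimmed laminated sets lie in $A$ mod null sets while keeping them stably laminated. This requires the measurable selection of base points to commute with the almost-everywhere statement of Theorem \ref{thm ergodiccomponent}. It also requires absolute continuity (Proposition \ref{prop Conditional}) to pass from leafwise-null sets to $\vol$-null sets.
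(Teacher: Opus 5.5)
Your proposal is correct and follows the paper's proof. You apply Proposition \ref{prop:laminations_exist} to $A\cap B_\delta(x)$, use Theorem \ref{thm ergodiccomponent} to place the $K_i$ inside $A$ modulo null sets, and then apply Lemma \ref{lem:contained_H_s_density_points} with $c_1=\eta_0$, using the uniform $H^s$ bound from Propositions \ref{prop:ergodic_decomposition} and \ref{prop:H_s_cutoff}. Your extra steps make explicit two details the paper covers in a single sentence: you note that $A$ is, up to a null set, a union of the components $A_i$, so $1_A\in H^s$, and you discard the null set of base points whose leaves are not a.e.\ in $A$ before invoking absolute continuity.
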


\begin{proof}
	Let $s \in (0, 1/2)$ be a constant satisfying the conclusion of Proposition~\ref{prop:ergodic_decomposition}.
    
Fix an arbitrary    $x \in M$. 
    Let   $\fD > 0$, $ \theta_0 > 0$ be given by Proposition~\ref{prop:laminations_exist}. For every $\eps > 0$, we let  $\delta_0 > 0$, $\eta_0 > 0$ be given by Proposition \ref{prop:laminations_exist} and take some $\delta \in (0, \delta_0)$.
	Applying Proposition \ref{prop:laminations_exist} to $ B_{\delta}(x)  \cap A$ in place of $A$,
 we obtain some ${\bf v} = (v_1,\ldots,v_d )\in ( T^1_xM )^d$ with $\abs{\det(v_1,\ldots,v_d)}>\theta_0$
 and $\Omega_1  \subset \Omega^{d} $
 such that for each $(\omega^1, \ldots, \omega^d) \in \Omega_1$ and each $1 \leq i \leq d$, 
 there is a 
 $(\omega^i, \fD)$-stably laminated
 Borel subset $K_i \subset B_{\delta}(x)$ 
 $\eps$-aligned with $v_i$, such that 
 $$
 \vol \left(B_{\delta}(x) \cap A \cap \bigcap_{i = 1}^{d} K_i \right) > \eta_0 \vol(B_{\delta}(x) \cap  A).
 $$ 
	By Theorem \ref{thm ergodiccomponent},  after possibly modifying $\Omega_1$ by a conull subset, for every 
    $(\omega^1, \ldots, \omega^d) \!\!\in\!\! \Omega_1$, and every $1 \leq i \leq d$, $K_i$ is contained in $A$ modulo a volume null set.
Then, letting $\eta_1 < \eta_0$,
	Item (1) follows.
	
 	  Now, we have $1_A\in H^s$ by Proposition \ref{prop:ergodic_decomposition}.
      In the exponential chart at $x$, we may view $B_{\delta}(x)$, $A$ as subsets on $\R^d$. Then
      $\| 1_{B_{\delta}(x) \cap A} \|_{H^s(\R^d)}$ is uniformly bounded in $\delta$     due to Proposition \ref{prop:H_s_cutoff} (see  Remark \ref{rem:density_points_exponential_chart}).
    We apply Lemma \ref{lem:contained_H_s_density_points} 
    for $B_{\delta}(x) \cap A$ in place of $A$, for $K$ in place of $\widetilde{A}$ and for $c_1 = \eta_0$.
    Then by letting $\eta_1 < \min(c_0, \eta_0, 1/2)$ ($c_0$ is produced by Lemma \ref{lem:contained_H_s_density_points}), we obtain Item (2). This finishes the proof.  
\end{proof}

\section{A multilinear Kakeya Estimate}\label{sec:geometric_lemma_following_guth}

In this section, we will introduce the  multilinear Kakeya estimate from harmonic analysis, 
Theorem \ref{thm:tao_polynomial_cylinders}, which is crucial in our proof. This theorem is an immediate corollary of Theorem~\ref{thm:taos_theorem}, which is a restatement of a theorem of Tao in \cite{tao2020sharp}. Roughly speaking, this theorem estimates the measure of an intersection of $d$ transverse families of nearly parallel tubes 
in terms of the measures of the cross sections of these families.

We will now  explain precisely the statement of \cite[Thm.~1.3]{tao2020sharp}.
First, we describe a measure on a family of approximate tubes. 
Suppose that $\phi \colon B^d(0,\bA)\to \R^{d-1}$ is a $C^2$ submersion. Then assuming the derivatives of $\phi$ are \lq\lq suitably controlled\rq\rq, which we will explain in a moment, it follows that $\phi$ defines an approximate tube $\phi^{-1}(B^{d-1}(0,t))\subset \R^d$ of radius proportional to $t$. Note that the indicator function of this tube is $1_{B^{d-1}(0,t)}\circ \phi$.  By using an indexing measure space $(\Omega,\mu)$, we may consider a measurable family of functions $\Big ( x \mapsto \phi[x](\omega) \Big )$ indexed by   $\omega\in \Omega$. Naturally, we can integrate over this space to define the (weighted) indicator function of the family of approximate tubes:
\[
\int_{\Omega} 1_{B^{d-1}(0,t)}(\phi[x](\omega))\,d\mu(\omega).
\]
Note that this expression is a function of $x$.

We now make more precise the notion of {\em suitably controlled} in the above paragraph. 
Note that $\phi$ is a submersion when the matrix  $D_x\phi$ has rank $d-1$ at every point. This matrix also controls the geometry of the approximate tubes $1_{B^{d-1}(0,t)}\circ \phi$. 
By controlling the non-zero singular values of this matrix, one ensures that the preimage of $B^{d-1}(0,t)$ will be 
  an approximate tube with transverse size comparable to $t$.
We will also   need to control the second derivatives of $\phi$.

In the statement of the theorem, we will have $d$ different collections of tubes, so that the tubes in the different collections are transverse to each other.
Note that the vectors in the kernel of $D\phi$ are 
 ``tangent  to the axis of the tube." Thus if we have $d$ submersions $\phi_i\colon \R^d\to \R^{d-1}$, $1\le i\le d$, and a point $x\in \R^d$ we can ask that these tubes be mutually transverse in the following sense. Suppose that $n_1,\ldots,n_d$ are unit vectors such that $n_i\in \ker D_x\phi_i$. Then the natural way to quantify how mutually transverse these vectors are is the determinant $\abs{\det(n_1,\ldots,n_d)}$.

We now state Tao's main result from \cite{tao2020sharp}   using our notation. 

\begin{theorem}\label{thm:taos_theorem}
\cite[Thm.~1.3]{tao2020sharp} Let $\frac{1}{d-1}<\bp\le \infty$ be an exponent, and let $\bA\ge 2$ be a parameter. Let $\cV\subset B^d(0,\bA)$ be an open subset of the ball $B^d(0,\bA)$ of radius $\bA$ centered at the origin in $\R^d$. 
Let $(\vec{\Omega},\vec{\mu})$ be a $d$-tuple of finite measure spaces $(\Omega_j,\mu_j)$, and for each $j\in \{1,\ldots,d\}$, let $\phi_j\colon \cV\times \Omega_j\to \R^{d-1}$ be a measurable map (denoted 
$(x,\omega_j)\mapsto \phi_j[x](\omega_j)$) obeying the following axioms:
\begin{enumerate}[leftmargin=*]
    \item (Regularity) For each $j\in \{1,\ldots,d\}$ and $\omega_j\in \Omega_j$, the map $\phi_j[\cdot](\omega_j)\colon x\mapsto \phi_j[x](\omega_j)$ is a $C^2$ map   whose $C^2$ norm is bounded by $\bA$.
    \item (Submersion) For each $j\in \{1, \ldots, d\}$, $\omega_j\in \Omega_j$, and $x\in \cV$, the derivative matrix $D_x\phi_j[x](\omega_j)\in \R^{(d-1) \times d}$ is of full rank, with the $(d-1)$ non-trivial singular values lying between $\bA^{-1}$ and $\bA$.
    In particular, $\phi_j[\cdot](\omega_j)\colon \cV\to \R^{d-1}$ is a $C^2$-submersion;
    \item (Transversality) For any $x\in \cV$ and $\omega_j\in \Omega_j$ for $j\in \{1,\ldots,d\}$,
    \begin{equation*}  \abs{\det(n_1,\ldots,n_d)}
  \ge \bA^{-1}
    \end{equation*}
    whenever $n_j\in \R^d$ is a unit vector in the nullspace of $D_x\phi_j[x](\omega_j)$.
\end{enumerate}
    
     Define
    $\DS
    \cV_{1/\bA}\coloneqq \{x\in \cV |  B^d(x,1/\bA)\subset \cV\}.
    $
    Then for any $0<t\le \bA$, one has
    $$
    \left\|\prod_{1\le j\le d} \int_{\Omega_j} 1_{B^{d-1}(0,t)}(\phi_j[x](\omega_j))\,d\mu_j(\omega_j)\right\|_{L^\bp(\cV_{1/\bA})}\!\!\!\!\!\!\!\le \bA^{C(d)} 
    \left(d-1-\frac{1}{\bp}\right)^{-C(d)}t^{\frac{d}{\bp}} \prod_{1\le j\le d} \!\!\! \mu_j(\Omega_j). 
    $$
    Here $C(d)$ denotes a constant depending only on $d$.
\end{theorem}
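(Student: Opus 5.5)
Since Theorem~\ref{thm:taos_theorem} is a restatement of \cite[Thm.~1.3]{tao2020sharp}, I will not reprove it from scratch. The plan is to check, hypothesis by hypothesis, that our formulation is a faithful transcription of Tao's, and to note the harmless normalizations along the way. Concretely, I will write out Tao's statement in his notation:
\begin{itemize}
\item a $d$-tuple of finite measure spaces;
\item maps $\phi_j[x](\omega_j)$ from $\mathcal V\times\Omega_j$ into $\R^{d-1}$;
\item his regularity, submersion and transversality axioms, each quantified by one parameter $A\ge 2$.
\end{itemize}
I will then match them one by one with items (1)--(3) above. Regularity is the same $C^2$ bound by $\bA$. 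The submersion axiom is phrased through singular values in $[\bA^{-1},\bA]$. Transversality is the lower bound $\bA^{-1}$ on $\abs{\det(n_1,\ldots,n_d)}$ for unit null vectors $n_j$. The conclusion then reads off directly: the $L^{\bp}$ norm on the shrunken domain $\cV_{1/\bA}$ of the product of the averaged tube indicators $\int_{\Omega_j}1_{B^{d-1}(0,t)}(\phi_j[x](\omega_j))\,d\mu_j$.

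The one place needing care is the constant. Tao's bound is stated as $A^{O(1)}(d-1-1/\bp)^{-O(1)}t^{d/\bp}\prod_j\mu_j(\Omega_j)$, where the implied exponents depend only on $d$. I will simply rename these as $C(d)$. If his version is stated for probability spaces, or for $\mu_j(\Omega_j)=1$, I will recover the general finite-measure case by homogeneity: replacing $\mu_j$ by $\mu_j/\mu_j(\Omega_j)$ scales the left side by exactly $\prod_j\mu_j(\Omega_j)^{-1}$. Similarly, if his statement uses balls $B(0,t)$ defined via a different norm or a different domain convention, a bounded change of constants absorbed into $\bA^{C(d)}$ handles it. Here I use $\bA\ge2$ and $t\le\bA$ so that such multiplicative losses are at most powers of $\bA$.

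The only potential obstacle is making sure the transversality condition is imposed for \emph{all} choices $\omega_j\in\Omega_j$ and all $x\in\cV$, exactly as in Tao, rather than almost surely. Our later application will guarantee this by restricting the index spaces to the good itineraries produced in Proposition~\ref{lem:configuration_of_local_laminations}, so no extra argument is needed here beyond the citation.
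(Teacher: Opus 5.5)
Your proposal matches the paper: the theorem is not proved there but quoted directly from \cite[Thm.~1.3]{tao2020sharp} in the paper's notation, so checking the transcription of hypotheses and renaming Tao's $O(1)$ exponents as $C(d)$ is all that is needed. One small correction to your last remark: in the application (the proof of Theorem \ref{thm:tao_polynomial_cylinders}) the index spaces are not itineraries but the curve space ${\bf X}$ with the factor measures $\mu_j$, and transversality for each $\ell\in\supp\mu_j$ comes from the $\epsilon_0$-alignment of $\ell$ with $v_j$ together with $\abs{\det(v_1,\ldots,v_d)}>\theta$.
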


When we apply the above theorem, we will consider a fixed $\bA$ that is large enough that the curves in distinct families we consider will not intersect near the boundary of the ball $B^{d}(0,\bA)$. 

 In the above formulation, a curve at the center of the tube is described as a fiber of a mapping from
$\mathbb{R}^d$ to $\mathbb{R}^{d-1}$. For our purposes it is more convenient to work with curves in
a parametric form. Therefore we now restate the  multilinear Kakeya estimate adapted to the present setting.

Given $\delta > 0$, we denote by ${\bf X}(d, \delta)$
the space of arc-length parametrized $C^2$ embeddings of $[- \delta, \delta]$ into $\R^d$, equipped with the  $C^2$ topology. 

\begin{theorem}\label{thm:tao_polynomial_cylinders}
Fix $d\in \N$.
For every 
$p>1$, $B_1, \theta >0$ there exist $\delta_0,  \epsilon_0, C_1>0$ such that for every 
$0 < \delta<\delta_0$, $0 < \rho < \delta / 2$,
the following is true.
	Let $( v_1, \ldots, v_d )$ be a frame of unit vectors in $\R^d$ such that $\abs{ \det(v_1, \ldots, v_d)} > \theta$.
	For each $j \in \{1, \ldots, d\}$, let $\mu_j \in \mathcal{M}({\bf X})$ be  a finite Borel measure on  
    ${\bf X} =  {\bf X}(d,    4 \delta)$  
    satisfying that for each $\ell\in \supp \mu_j$, we have $\|\ell\|_{C^{2}}\le B_1$ and 
  $\DS \sup_{t \in (- \delta , \delta)} \angle(v_j, \ell'(t))  <\epsilon_0$,
and  
$\im(\ell) \cap B^{d}(0, \delta) \subset \im(\ell[-2\delta, 2\delta])$.
Then 
$$\int_{B^{d}(0, \delta)} \left[\prod_{j=1}^d \int_{{\bf X}} \chi_{\ell, \rho } \,  \,d\mu_j(\ell)\right]^{\frac{p}{d-1}}\,d\vol 
 \le C_1  \rho^{d} \Big ( \prod_{j=1}^d 
\mu_j({\bf X}) \Big )^{\frac{p}{d-1}},
$$
where $ \chi_{\ell, \rho}$  is the indicator of the $\rho$ tubular neighborhood of $\ell$.
\end{theorem}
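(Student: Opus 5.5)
The plan is to deduce the statement from Theorem~\ref{thm:taos_theorem} by rescaling the ball $B^d(0,\delta)$ to unit size. Then, for each curve $\ell$, I would write its $\rho$-tube as a sublevel set of a submersion $\phi_\ell$ that vanishes on $\ell$. The exponent is $\bp = p/(d-1)$; since $p>1$, this satisfies $\bp > 1/(d-1)$. The measure spaces are $(\Omega_j,\mu_j) = ({\bf X},\mu_j)$.

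\emph{Step 1: rescaling.} Put $x = \delta y$ and $\tilde\ell(s) = \delta^{-1}\ell(\delta s)$ for $s\in[-4,4]$. Then $\tilde\ell$ is arc-length parametrized and $\|\tilde\ell''\|\le B_1\delta$, which is as small as we like once $\delta<\delta_0$. The tube radius becomes $t = \rho/\delta \le 1/2$. Since the second derivative is at most $B_1$, the tangent direction moves by at most $4B_1\delta$ over $[-4\delta,4\delta]$. So, shrinking $\delta_0$, the bound $\angle(v_j,\ell')<\epsilon_0$ holds (with $2\epsilon_0$) on the whole parameter interval, not only on $(-\delta,\delta)$.

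\emph{Step 2: building the submersion.} Choose orthonormal coordinates $(a,b)\in\R\times\R^{d-1}$ with the $a$-axis along $v_j$. By Step~1, $\tilde\ell$ is the graph $b = g_\ell(a)$ over an interval of length about $8$ that contains $[-3,3]$. Here $|g_\ell'|\lesssim\epsilon_0$ and $\|g_\ell''\|\lesssim B_1\delta$. Define
\[
\phi_\ell(a,b) = b - g_\ell(a)
\]
on $\cV = B^d(0,2)$. I would then check the three axioms of Theorem~\ref{thm:taos_theorem} for a suitable $\bA = \bA(d,\theta)$.
\begin{enumerate}
\item Regularity: the $C^2$ norm of $\phi_\ell$ is $O(1)$.
\item Submersion: the singular values of $D\phi_\ell$ lie in $[1-C\epsilon_0,\,1+C\epsilon_0]$.
\item Transversality: the kernel of $D\phi_\ell$ is spanned by $(1,g_\ell'(a))$, which makes angle $O(\epsilon_0)$ with $v_j$. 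Choosing $\epsilon_0$ small in terms of $\theta$ and $d$ gives $|\det(n_1,\dots,n_d)|\ge\theta/2$.
\end{enumerate}
Taking $\bA\ge 2$ large also gives $\cV_{1/\bA}\supset B^d(0,1)$. Because $\phi_\ell$ vanishes on $\tilde\ell$ and is $(1+C\epsilon_0)$-Lipschitz, every point within distance $t$ of $\tilde\ell$ satisfies $|\phi_\ell|\le 2t$. This gives the pointwise bound $\chi_{\tilde\ell,t}\le 1_{B^{d-1}(0,2t)}\circ\phi_\ell$ on $B^d(0,1)$. The hypothesis $\im(\ell)\cap B^d(0,\delta)\subset\im(\ell[-2\delta,2\delta])$, together with $\rho<\delta/2$, ensures that the relevant part of the tube near the unit ball comes from the portion of the curve that is already a graph. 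Measurability of $(y,\ell)\mapsto\phi_\ell(y)$ follows from continuity in the $C^2$ topology.

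\emph{Step 3: applying Tao's theorem and undoing the scaling.} Theorem~\ref{thm:taos_theorem} with radius $2t$, raised to the power $\bp$, gives
\[
\int_{B^d(0,1)}\Big[\prod_j\int\chi_{\tilde\ell,t}\,d\mu_j\Big]^{p/(d-1)}dy \le C(d,\theta,p)\,t^{d}\Big(\prod_j\mu_j({\bf X})\Big)^{p/(d-1)}.
\]
Changing variables back with $dx = \delta^d\,dy$ turns $t^d\delta^d$ into $\rho^d$, which is the claimed inequality with $C_1 = C(d,\theta,p)$.

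The main point needing care is Step~2: making sure each curve is globally a graph over the $v_j$-axis on all of $\cV$, and that the tube of $\ell$ is dominated by a sublevel set of $\phi_\ell$ uniformly over all $\ell\in\supp\mu_j$. The smallness of $\delta_0$ (from $B_1$) and of $\epsilon_0$ (from $\theta$) handles both.
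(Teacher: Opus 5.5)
Your route is the paper's: rescale, write each curve as a graph $b=g_\ell(a)$ over the $v_j$-axis, set $\phi_\ell(a,b)=b-g_\ell(a)$, dominate $\chi_{\tilde\ell,t}$ by $1_{B^{d-1}(0,2t)}\circ\phi_\ell$, apply Theorem~\ref{thm:taos_theorem} with $\bp=p/(d-1)$, and undo the scaling. Scaling by $1/\delta$ with $\cV=B^d(0,2)\subset B^d(0,\bA)$, instead of the paper's $\bA/\delta$, is a harmless and arguably cleaner variant. Your exponent bookkeeping, submersion and transversality checks, and change of variables are correct.

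What does not work as written is the claim in Step~2 that the projection of $\tilde\ell$ to the $v_j$-axis contains $[-3,3]$. The same goes for your closing assertion that smallness of $\delta_0,\epsilon_0$ makes every curve a graph over all of $\cV$. Smallness only makes $\tilde\ell$ a graph over its own projection, which is an interval of length about $8$ whose position is not controlled. The hypothesis $\im(\ell)\cap B^d(0,\delta)\subset\im(\ell[-2\delta,2\delta])$ is vacuous for curves that miss $B^d(0,\delta)$ while their $\rho$-tube meets it. For instance, let $w\perp v_j$ be a unit vector and take $\tilde\ell(s)=(s+4)v_j+1.2\,w$ for $s\in[-4,4]$, with $t=0.4$. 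This curve satisfies all the hypotheses and its tube meets $B^d(0,1)$, yet its projection is $[0,8]$. Even for curves that do enter $B^d(0,1)$, the hypothesis only gives a projection containing roughly $[-2,2]$.

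Since the axioms of Theorem~\ref{thm:taos_theorem} must hold at every $x\in\cV$, you have to extend $g_\ell$ beyond the ends of its natural domain. The paper does this with the Taylor polynomial at the endpoint. A second-order extension keeps $|g_\ell''|\lesssim B_1\delta$, hence $|g_\ell'|\lesssim\epsilon_0$, on the whole interval. It also depends continuously on $\ell$, so measurability survives. Finally, it keeps all of $\tilde\ell$ on the graph, so your Lipschitz argument for $\chi_{\tilde\ell,t}\le 1_{B^{d-1}(0,2t)}\circ\phi_\ell$ then goes through.

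You should also discard the curves whose $\rho$-tube misses $B^d(0,\delta)$. This leaves the left side unchanged and only lowers the right side. It makes $\sup|g_\ell|$ uniformly bounded, and with it the $C^0$ part of $\|\phi_\ell\|_{C^2}$ in the regularity axiom.
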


\begin{proof}
	Let $\bA > 2$ be a large constant to be determined later depending only on $d, B_1, \theta$.

Denote $\widetilde{\bf X} \!=\! {\bf X}(d, 4\bA)$.
To each $\ell \in {\bf X}$, we associate  $\widetilde\ell \in \widetilde{\bf X}$ given by 
$\widetilde{\ell}(t) \!\!= \! \! \bA \delta^{-1} \ell( \delta t / \bA)$. It is direct to see that for every $j$ and every $\ell \in \supp(\mu_j)$, \; $\widetilde\ell$ is also parametrized by length, and 
$$
\| D^2 \widetilde\ell\|  \le  B_1 \delta /\bA, \quad
\sup_{t \in (- 4\bA, 4\bA)} \!\!\! \angle(v_j, (\widetilde{\ell})'(t)) <   \epsilon_0.
$$
Let $\widetilde\rho :=  \bA  \rho / \delta$. Then 
$\chi_{\widetilde\ell, \widetilde\rho}(\cdot) =   \chi_{\ell, \rho}( \bA^{-1} \delta \cdot)$ is the indicator function of the $\widetilde\rho$ tubular neighborhood of $\widetilde\ell$.

Fix some $j \in \{1, \ldots, d\}$. 
Denote $V_j  = \R v_j$. 
Let $\ell \in \supp \mu_j$. 
By our hypothesis, the intersection between $B^d(0, \bA)$ and the image of $\widetilde\ell$, denoted by $\gamma_{\ell}$,  is contained in the image of $\widetilde\ell|_{[-2 \bA, 2 \bA]}$.  Due to the $\epsilon_0$-alignment hypothesis and bound on the $C^2$ norm of $\widetilde\ell$, we can choose a $C^2$ function  
$g_{\ell}\colon V_j(3\bA) \to V_j^{\perp}$ such that $\| g_{\ell} \|_{C^2} \le B_1$ and   $\gamma_{\ell}$  is contained in the graph of $g_{\ell}$   (Here $V_j(3\bA)$ denotes the ball in $V_j$ centered at the origin with radius $3\bA$).  Our choice of $g_{\ell}$ can be made to depend measurably on $\ell$  by defining it using the Taylor polynomial at the endpoint. Due to the bound $\|D^2\wt{\ell}\|\le B_1 \delta/\bA$, as long as $\delta_0$ is sufficiently small, the graph of $g_{\ell}$ is also $2\epsilon_0$-aligned.
We define $\widetilde\phi_{\ell}\colon B^d(0, 3\bA) \to \R^{d-1}$ by
$\widetilde\phi_{\ell}(y, z)=z-g_{\ell}(y)$ for every $x=(y,z)\in V_j(3\bA) \times V_j^\perp$.  By construction, $\widetilde\phi_{\ell}$ depends measurably on $\ell$.
 An easy computation shows that
  the nullspace of 
$D_x\widetilde{\phi}_{\ell}$ at every point is parallel to the tangent space of $\widetilde\ell$ at some point, and hence is $ 2\epsilon_0$-close to $v_j$.
Thus the assumptions of Theorem \ref{thm:taos_theorem} are satisfied when $\epsilon_0$ is sufficiently small depending on $d, \theta$, and $\bA$ is sufficiently large depending on $d, B_1$ and $\theta$.
 Moreover, restricted to $B^d(0, \bA)$, we have
\begin{equation}\label{eqn:chi_lower_bound2rho}
\chi_{\widetilde\ell,  \widetilde \rho}\le  
 1_{B^{ d-1}(0,    2 \widetilde\rho)}\circ \widetilde\phi_{\ell}.
\end{equation}
 Applying   Theorem \ref{thm:taos_theorem} 
with  $  \mathcal{V}=B^d(0, \bA+\frac{1}{\bA})$,  $t= 2 \widetilde\rho$ and $\bp=p/(d-1)$ yields 
\begin{equation}\label{eqn:application_of_tao1}
\int_{B^d(0,  \bA)} \left[\prod_{1\le j\le d} \int_{{\bf X}} 1_{B^{d-1}(0, 2 \widetilde\rho)}(\widetilde\phi_{\ell}(x))\,d\mu_j(\ell)\right]^{\frac{p}{d-1}} \!\!\!\! d\vol(x) \le 
 C \widetilde\rho^d \prod_{1\le j\le d} \mu_j({\bf X})^{\frac{p}{d-1}}. 
\end{equation}
By \eqref{eqn:chi_lower_bound2rho} and by a change of variable $x = \bA y/\delta$, the left hand side above is bounded from below by
$$
\bA^d   \delta^{-d}	\int_{B^d(0, \delta)} \left[\prod_{1\le j\le d} \int_{{\bf X}}   \chi_{ \ell,   \rho}(y) \,d\mu_j(\ell)\right]^{\frac{p}{d-1}} \!\!\!\! d\vol(y).
$$
The right hand side of \eqref{eqn:application_of_tao1} is
$\DS 
 C( \bA  \rho / \delta)^d  \Big ( \prod_{1\le j\le d} \mu_j({\bf X}) \Big )^{\frac{p}{d-1}}.  $
This finishes the proof.
\end{proof}

\section{Non-ergodicity implies the density gap}\label{sec:main_estimate}

  Here we prove our main result, Theorem \ref{thm:main_thm}.

\begin{proposition}\label{prop:quantitative_density_est}
Suppose that $\mu \in \cG $ and that $A$ is a $\mu$-almost surely invariant set of  positive volume. 
Given $p>1$
there exist $\delta_0,C_p>0$ such that 
for any $0<\delta<\delta_0$ and any   $x\in M$, 
\begin{equation}
\frac{\vol(A\cap B_{\delta}(x))}{\vol(B_{\delta}(x))} -C_p \delta^{  d} 
\le C_p  \delta^{-\eps_p} \left(\frac{\vol(A\cap B_{\delta}(x))}{\vol(B_{\delta}(x))}\right)^{\frac{pd}{d-1}}
\end{equation}
where $\DS \eps_p=(p-1) d \left(\frac{2d}{s}-1\right)$,
and $s$ is from Proposition \ref{lem:configuration_of_local_laminations}.
\end{proposition}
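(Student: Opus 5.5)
The plan is to compare two bounds for the $L^{p/(d-1)}$ integral in Theorem~\ref{thm:tao_polynomial_cylinders}, taken over the set $\widetilde K$ of Proposition~\ref{lem:configuration_of_local_laminations}. The upper bound comes from the multilinear Kakeya estimate and is in terms of $\vol(A\cap B_\delta(x))$. The lower bound comes from the density of $K$ at the points of $\widetilde K$.

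\textbf{Setup.} Fix $x$ and $\delta$, work in the exponential chart at $x$, and write
\[
\kappa=\vol(A\cap B_\delta(x))/\vol(B_\delta(x)), \qquad \rho=\delta^{2d/s}.
\]
We may assume $\vol(A\cap B_\delta(x))>0$. Fix $\eps$ equal to the $\epsilon_0$ that Theorem~\ref{thm:tao_polynomial_cylinders} produces for $\theta=\theta_0$ and $B_1=C(M)\fD$. Apply Proposition~\ref{lem:configuration_of_local_laminations} with this $\eps$ and pick one tuple $(\omega^1,\dots,\omega^d)\in\Omega_1$. This gives sets $K_1,\dots,K_d\subset A$, with $K_i$ $(\omega^i,\fD)$-stably laminated and $\eps$-aligned with $v_i$, where $|\det(v_1,\dots,v_d)|>\theta_0$. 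It also gives $\vol(\widetilde K)>\eta_1^2\kappa\vol(B_\delta)-\eta_1^{-1}\delta^{2d}$.

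\textbf{Measures on curves.} For each $i$, let $\mu_i$ be the image of $\delta^{-1}\vol|_{K_i}$ under the map sending $z$ to the arc-length parametrization of its local leaf $(W^{ss}(\omega^i,z)\cap B_\delta(x))_z$. The leaves are $\fD$-good and $\fD$-properly cross the ball. After shrinking $\delta_0$ so that $\fD$-good curves are nearly straight on scale $4\delta$, we can extend each leaf to an element of ${\bf X}(d,4\delta)$ with $C^2$ norm at most $C(M)\fD$. These extended curves satisfy the alignment and containment hypotheses of Theorem~\ref{thm:tao_polynomial_cylinders}. The total masses satisfy
\[
\mu_i({\bf X})=\vol(K_i)/\delta\le \vol(A\cap B_\delta(x))/\delta\lesssim \kappa\,\delta^{d-1}.
\]
Theorem~\ref{thm:tao_polynomial_cylinders} then bounds the integral over $B_\delta(x)$ by $C_1\rho^d(\kappa\delta^{d-1})^{pd/(d-1)}$.

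\textbf{Pointwise lower bound (main obstacle).} The key step is to show that for $y\in\widetilde K$ and each $i$,
\[
\int\chi_{\ell,\rho}(y)\,d\mu_i(\ell)\gtrsim \eta_1\rho^{d-1}.
\]
Every point of $K\cap B_\rho(y)$ lies on a leaf of $K_i$ that passes within $\rho$ of $y$, and $\vol(K\cap B_\rho(y))>\eta_1\vol(B_\rho(y))$. Each such leaf has length $\gtrsim\delta/\fD$ in $B_\delta(x)$, and by Proposition~\ref{prop Conditional} the conditional densities along a leaf are comparable up to $\hat C$. Hence the volume of the union of these leaves, which is contained in $K_i$, is $\gtrsim (\delta/\rho)\vol(K\cap B_\rho(y))$. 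Dividing by $\delta$ gives the claim.

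The difficulty is the measurable bookkeeping. We must disintegrate $\vol|_{K_i}$ along leaves via Rokhlin's theorem, count leaves through $B_\rho(y)$ by cross-sectional measure, and use the bounded density ratio to convert leaf mass inside $B_\rho(y)$ into full leaf mass. I would carry this out through the disintegration \eqref{eq decomposition} on the set $\Lambda^{ss}$ of the leaves.

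\textbf{Conclusion.} Combining the two bounds gives
\[
\vol(\widetilde K)\,(\eta_1\rho^{d-1})^{pd/(d-1)}\lesssim \rho^d(\kappa\delta^{d-1})^{pd/(d-1)}.
\]
This rearranges to
\[
\vol(\widetilde K)/\delta^d\lesssim (\rho/\delta)^{d(1-p)}\kappa^{pd/(d-1)}=\delta^{-\eps_p}\kappa^{pd/(d-1)}.
\]
Inserting the lower bound $\vol(\widetilde K)\ge \eta_1^2\kappa\vol(B_\delta)-\eta_1^{-1}\delta^{2d}$ and dividing by $\eta_1^2\vol(B_\delta)$ yields the stated inequality with $C_p$ depending on $\eta_1$, $C_1$ and $M$. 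The constants are uniform in $x$ because Proposition~\ref{lem:configuration_of_local_laminations} is uniform in $x$.
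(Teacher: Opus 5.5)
Your proposal is correct and follows the paper's proof essentially step for step. It uses the same scale $\rho=\delta^{2d/s}$, the same leafwise disintegration of $\vol|_{K_i}$ (your $\mu_i$ is the paper's factor measure divided by $\delta$), the same Kakeya upper bound $\rho^d\vol(A\cap B_\delta(x))^{pd/(d-1)}$, and the same lower bound $\int\chi_{\ell,\rho}(y)\,d\mu_i\gtrsim(\delta/\rho)\vol(K_i\cap B_\rho(y))$ on $\widetilde K$, which comes from the $\asymp 1/\delta$ leaf densities; your explicit choice of $\eps$ as the alignment threshold of Theorem~\ref{thm:tao_polynomial_cylinders} (taken slightly smaller, to get strict inequalities) is, if anything, more careful than the paper's.
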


\begin{remark}
    The precise value of $\eps_p$ is not important for our argument; we just use that 
    $\DS \lim_{p\downarrow 1} \eps_p=0$.
\end{remark}

\begin{proof}
  With the convention in Remark \ref{rem:density_points_exponential_chart},
  we may identify the subset $B_{\delta}(x)$ of $M$ as a subset of $\R^d$, using the exponential chart at $x$. We may also abusively use $\vol$ to denote the volume on $\R^d$, as this will only change the ratio $\frac{\vol(A\cap B_{\delta}(x))}{\vol(B_{\delta}(x))}$
  by a factor depending only on $M$.

We will let $C$ and $c$ be generic positive constants, depending only on $\mu, M, d$   and $p$, which may vary from line to line.

We may assume without loss of generality that $\vol(A \cap B_{\delta}(x) ) \!>\! 0$.
Let $s > 0$, $\fD > 1$ and $\theta_0 > 0$ be given by Proposition~\ref{lem:configuration_of_local_laminations}, applied to $A$.
Let $\eps > 0$ be a small constant so that for every 
${\bf v} = (v_1,\ldots,v_d )\in ( \R^d)^d$ 
with $|\det(v_1, \cdots, v_d)| > \theta_0$, for every ${\bf v}' = (v'_1,\ldots,v'_d )\in ( \R^d )^d$  
with $\DS \max_{1 \leq i \leq d} | \angle(v_i, v'_i) | < \eps$, we have $|\det(v'_1, \cdots, v'_d)| > \theta_0/2$. As there are only finitely many choices for $A$ (see Proposition \ref{prop:ergodic_decomposition}), we may consider $s, \fD, \theta_0, \eps$ as parameters depending only on $\mu$. 

Let $\delta > 0$ be a small parameter and $\rho  = \delta^{2d/s}$.

Applying Proposition~\ref{lem:configuration_of_local_laminations} to $A$ and $\eps$, we obtain $\eta_1 > 0$ (depending only on $\mu$) and 
a set of $(\omega^1, \cdots, \omega^d)$ with positive $\PP^{\otimes d}$-measure such that for almost every such $(\omega^1, \ldots, \omega^d)$, there exist   
${\bf v} = (v_1,\ldots,v_d )\in ( \R^d )^d$,
$(K_i \subset B_{\delta}(x))_{i = 1}^{d}$ such that  
\enmt
\item for every $i$, $K_i$ is $(\omega^i, \fD)$-stably laminated and $\eps$-aligned with $v_i$, and 
$\DS \bigcup_{i = 1}^{d} K_i$ is contained in $A$ modulo a volume null set,
\item
$\DS \widetilde{K} \subset K \coloneqq B_{\delta}(x) \cap A \cap \bigcap_{i = 1}^{d} K_i$ satisfies
\begin{equation}\label{eqn:Ktilde_full}
\vol(  \widetilde K ) > \eta_1^2 \vol(B_{\delta}(x) \cap A) -  \eta_1^{-1} \delta^{2d},
\end{equation}
and for every $y \in \widetilde K$, we have 
\begin{equation}\label{eqn:Ki_full}
\vol(B_{\rho}(y) \cap K_i) > \eta_1 \vol(B_{\rho}(y)), \quad 1 \leq i \leq d.
\end{equation}
\eenmt

 Fix an arbitrary $1 \leq i \leq d$. 
 Since the $K_i$ are $(\omega^i,\fD)$-stably laminated,
 \eqref{eq decomposition} and Proposition \ref{prop Conditional}
 allow us to decompose the restriction of the volume on $K_i$ as follows: for each Borel set $E\subset K_i$ 
 \begin{equation}
  \label{VolNuL}   
 	 		\vol(E) = \int_{\mathbf{X}} \nu_\ell(E)  d\mu_i(\ell)
  \end{equation}
where    $ \mathbf{X} = {\bf X}(d, 4\delta)$     and the $\mu_i$ are factor measures supported on $\omega^i$-strong stable manifolds satisfying the hypotheses of Theorem \ref{thm:tao_polynomial_cylinders}, and the $\nu_\ell$
are probability measures on $\ell$ satisfying
\begin{equation}
\label{DensityDelta}
  \frac{1}{C \delta} \leq     \frac{d\nu_\ell}{d m_\ell} \leq \frac{C}{\delta} \ \mbox{ on $\ell \cap B_{\delta}(x)$},
\end{equation}
where $m_\ell$ is the arclength measure on $\ell$. Note that taking $E=K_i$ in \eqref{VolNuL} gives
\begin{equation}
\label{MuiKi}
\mu_i(\mathbf{X})=\vol(K_i). 
\end{equation}
  
By Theorem \ref{thm:tao_polynomial_cylinders} and \eqref{MuiKi},
\begin{equation}\label{eqn:nonlinear_est0}
\int_{B_{\delta}(x)}\!\!\! \left[\prod_{i=1}^d (\int \chi_{\ell, \rho}(y) d\mu_i(\ell)) \right]^{\frac{p}{d-1}}\!\!\!\!d\vol (y)
\le   C 
\rho^{d}  \prod_{i=1}^d 
\left[    \vol(K_i) \right ]^{\frac{p}{d-1}}\!\!.
\end{equation}
We now relate the two sides of the above inequality to $\vol(A\cap B_{\delta}(x))$.  
\vskip2mm

\noindent\textbf{Right hand side of \eqref{eqn:nonlinear_est0}.}  
It is clear that 
$\DS
 \vol(K_j) \leq \vol(A \cap B_{\delta}(x)).
$
Hence the right hand side of \eqref{eqn:nonlinear_est0} is bounded from above by 
\ary \label{eq RHS}
     C
     \rho^{d} (  \vol(A \cap B_{\delta}(x)) )^{p d/(d-1)}.
\eary

\noindent\textbf{Left hand side of \eqref{eqn:nonlinear_est0}.} 
We bound the left hand side by considering the integrand restricted to $\widetilde K$. For every $y \in \widetilde{K}$, every $1 \leq i \leq d$, and every $\ell\in \supp(\mu_i)$, we have 
$$ 
\chi_{\ell, \rho}(y) \geq \frac{c\delta}{\rho} \nu_\ell(B_{\rho }(y)). 
$$
Integrating with respect to $\mu_i$ and using \eqref{DensityDelta} and the fact that $\widetilde{K}$ consists of density points 
of $K_i$ we conclude that
$$
\int \chi_{\ell, \rho}(y) d\mu_i(\ell)  \geq \frac{c\delta}{\rho} \vol(K_i\cap B_{\rho}(y)) \geq
 c  \delta \rho^{d-1} . 
 $$
Thus the left hand side of \eqref{eqn:nonlinear_est0} is bounded from below by
\ary \label{eq LHS}
c \vol(\widetilde{K}) \rho^{d p}  \delta^{pd/(d-1)}.
\eary

\noindent\textbf{Conclusion.}  
Comparing \eqref{eq RHS} and \eqref{eq LHS}, 
we get
\aryst
  \vol(\widetilde{K})    \delta^{pd/(d-1)} \rho^{(p-1) d} \leq  C
  (  \vol(A \cap B_{\delta}(x)) )^{pd/(d-1)}.
\earyst
Dividing both sides by $\vol(B_{\delta}(x))^{pd/(d-1)}=C\delta^{d^2p/(d-1)}$, and using \eqref{eqn:Ktilde_full} and \eqref{eqn:Ki_full}, we obtain:
\begin{align*}
\left[\eta_1^2 \frac{\vol(B_{\delta}(x) \cap A)}{\vol(B_{\delta}(x))} -  \frac{C \delta^{ d}}{\eta_1} \right]
\delta^{d(1-p)} \rho^{(p-1) d}
\leq    \frac{\vol(\widetilde{K})}{\vol( B_{\delta}(x) ) } \delta^{(1-p)d} \rho^{(p-1) d}
\leq C 
\left( \frac{ \vol(A \cap B_{\delta}(x)) }{ \vol(B_{\delta}(x))} \right)^{\frac{dp}{d-1}}\!\!\!\!.
\end{align*}
This concludes the proof.
\end{proof}

We now prove our main theorem.

\begin{proof}[Proof of Theorem \ref{thm:main_thm}.]
Suppose for contradiction that the random system is not ergodic. Then there is a $\mu$-almost invariant subset $A$ with $\vol(A) \in (0, 1)$. 

Fix $p>1$ so close to 1 such that $\eps_p$ given by Proposition \ref{prop:quantitative_density_est} satisfies 
$s/(d-1) >\eps_p$.
  
Let $\delta > 0$ be a small parameter to be determined later depending on $p$ and $\mu$.
By Corollary~\ref{CrHD} (applied to both $A$ and $A^c$), by letting $\delta$ be small, we may find some $x_0 \in A^c $ and $x_1 \in A$ such that 
$\DS \frac{\vol(A\cap B_{\delta}( x_0 ) )}{\vol(B_{\delta}( x_0 ))} \leq \delta^s$ and  
$\DS \frac{\vol(A\cap B_{\delta}( x_1 ) )}{\vol(B_{\delta}( x_1 ))}\geq 1- \delta^s$. 
Let $\gamma \colon [0, 1] \to M$ be a continuous map with $\gamma(0) = x_0$ and $\gamma(1) = x_1$. Denote $q_{\delta}(t) =  \vol(A\cap B_{\delta}( \gamma(t)) )  / \vol(B_{\delta}(\gamma(t))) $.  
By the Intermediate Value Theorem there exists $\bar t$ with $q_{\delta}(\bar t)=\delta^s.$
Applying Proposition~\ref{prop:quantitative_density_est}  with $x= \gamma(\bar t)$
we conclude that
\begin{equation}
\label{SmallerDensity}    
\delta^s-  C_p \delta^{d}  \le C \delta^{spd/(d-1)-\eps_p} 
\leq C_p \delta^{sd/(d-1)-\eps_p} 
= C_p \delta^{s + (s/(d-1)  - \eps_p)}.
\end{equation} 
We obtain a contradiction if $\delta$ is small enough. 
This completes the proof.
\end{proof}

\appendix

\section{Construction of the splitting}\label{app:construction_of_splitting}

\subsection{Proof of Lemma \ref{lem:ahlfors_Ess}}

The following lemma transfers the gap condition into a fractional moment bound for ratio of derivatives growth.

\begin{lemma} \label{lem MomentBound}
	If $\mu\in \cG$,
	then there exist  $\kappa_1 = \kappa_1(\mu) > 0$, $\sigma = \sigma(\mu) > 0$ and $C_1  = C_1(\mu) > 1$ such that for any $x \in M$,  any $W \in \Gr(T_x M, d - 1)$, and any integer $n \geq 1$, we have
	\aryst
	\int  \Big ( \sup_{v \in W^{\perp}, \| v \| = 1} \| P_{D_xg( W)^{\perp}}(  D_xg( v) ) \| /  { \con(D_xg|W)} \Big )^{\sigma} d\mu^{* n}(g) < C_1 e^{- n \sigma { \kappa_1} / 2}.
	\earyst
	
\end{lemma}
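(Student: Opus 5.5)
We prove the lemma by the standard route for IID products: exponentiate the gap condition for small $\sigma$, which yields a contraction for one block of $N$ steps. A cocycle property then lets us multiply over blocks, using a Markov-type submultiplicativity. For $g\in \Diff^1_{\vol}(M)$, $x\in M$ and $W\in \Gr(T_xM,d-1)$, set
\[
\Phi(g,x,W)=\frac{\| P_{D_xg(W)^\perp} D_xg\, u_W\|}{\con(D_xg|W)},
\]
where $u_W$ is a unit normal to $W$. Since $W$ has codimension one, the supremum over unit $v\in W^\perp$ in the statement equals $\Phi$.

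\textbf{Step 1 (submultiplicativity).} For $g_1,g_2$, write $W'=D_xg_1(W)$ and $x'=g_1x$. Choose a unit normal $u_{W'}$. Then $D_xg_1 u_W = a\,u_{W'} + w$ with $w\in W'$ and $|a| = \|P_{W'^\perp}D_xg_1u_W\|$. Applying $D_{x'}g_2$ and projecting to $(D(g_2g_1)W)^\perp$ kills $D_{x'}g_2 w$, so the normal component of $D(g_2g_1)u_W$ is exactly $a$ times the normal component of $D_{x'}g_2u_{W'}$. Since $\con$ is supermultiplicative, we get
\[
\Phi(g_2g_1,x,W)\le \Phi(g_1,x,W)\,\Phi(g_2,x',W').
\]

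\textbf{Step 2 (one block).} By \eqref{CoDim1Gap}, $\mathbb{E}[\log\Phi(f^N_\omega,x,W)]<0$ for every $(x,W)$. The map $(x,W)\mapsto \mathbb{E}\log\Phi$ is continuous on the compact Grassmann bundle, because $\mu$ is compactly supported and the integrand is bounded and continuous. Hence there is a uniform $\kappa_2>0$ with $\mathbb{E}[\log\Phi]\le-\kappa_2$.

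Since $|\log\Phi|\le L$ uniformly, the inequality $e^{\sigma y}\le 1+\sigma y+\sigma^2L^2e^{\sigma L}$ for $|y|\le L$ gives
\[
\mathbb{E}\bigl[\Phi(f^N_\omega,x,W)^\sigma\bigr]\le 1-\sigma\kappa_2+C\sigma^2\le e^{-\sigma\kappa_2/2}
\]
for $\sigma$ small, uniformly in $(x,W)$.

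\textbf{Step 3 (iteration).} Write $n=kN+r$ with $0\le r<N$. Apply Step 1 repeatedly to the decomposition of $f^n_\omega$ into $k$ blocks of length $N$ followed by a remainder of length $r$. Then condition successively on the first $(k-1)N+r'$ coordinates, a backward induction using independence of the blocks. Each block contributes at most $e^{-\sigma\kappa_2/2}$, by the uniform bound of Step 2 applied at the random point and plane reached so far. The remainder contributes a bounded factor, since $\Phi\le e^{L'}$ for words of length less than $N$. Thus
\[
\int\Phi(g,x,W)^\sigma\,d\mu^{*n}(g)\le C_1e^{-k\sigma\kappa_2/2}\le C_1 e^{-n\sigma\kappa_1/2}
\]
with $\kappa_1=\kappa_2/N$, after enlarging $C_1$ to absorb the remainder.

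The only delicate point is the uniformity in Step 2. It needs the continuity of the integrand and the uniform bound $L$, both of which follow from the compact support of $\mu$ in $\Diff^1$. Without that uniformity, the conditioning argument of Step 3 would fail.
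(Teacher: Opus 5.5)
Your proposal is correct and follows the paper's proof: exponentiate the codimension-one gap over a single block of length $N$ for small $\sigma$, iterate over blocks by conditioning on $\cF_{mN}$ using submultiplicativity of the ratio, and absorb the remainder $n-mN$ into the constant. What you add is explicit justification of two points the paper leaves implicit: the uniformity in $(x,W)$, via continuity on the compact Grassmann bundle, and the exact codimension-one identity for the normal component that underlies the submultiplicativity.
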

\begin{proof}
The proof consists of several steps.

{\bf Step 1.}
     Using that $e^{-t} \le 1 - t/2 \leq e^{-t/2}$ for  $t > 0$ small enough, we obtain that for sufficiently small $\sigma, \kappa_1 > 0$
\begin{equation}
\label{Gap1Step}
	\int  \Big ( \sup_{v \in W^{\perp}, \| v \| = 1} \| P_{D_xg( W)^{\perp}}(  D_xg( v) ) \| /  { \con(D_xg|W)} \Big )^{\sigma} d\mu^{* N}(g) \le e^{- N \sigma \kappa_1 / 2},
    \end{equation}
where $N$  comes from \eqref{CoDim1Gap}. 

{\bf Step 2.} 
Let $(\cF_k)_{k \in \N}$ be the filtration on $\Omega$ given by the coordinate projections.  
	Consider a Markov chain on the $(d-1)$-Grassmannian bundle of $M$, given by the transition probabilities
	\begin{equation}
    \label{AppMC}
	P((x, W), Y ) = \mu(  g \mid Dg(x, W) \in Y  ).
	\end{equation}
	We use $\PP_{(x, W)}$ to denote the law of the Markov process initiated from $(x, W)$ and use $\EV_{(x, W)}$ for
    the expectation with respect to $\PP_{(x, W)}$.
	
	For each $x \in M$ and integer $m \geq 0$, we define 
	\begin{equation}
    \label{OrbitNotation}
	X_m = f^{mN}_{\omega}(x), \quad W_m = D_x f^{mN}_{\omega}(W) \in \Gr(T_{X_m}M, d - 1).
	\end{equation}   

We will prove by induction that for each natural number $m$
\begin{equation}
\label{ExpGap}
	\mathbb{E}_{(x,W)} \left[\left( \sup_{v \in W^{\perp}, \| v \| = 1} \| P_{W_{m}^{\perp}}(  D_x f^{mN}_\omega( v) ) \| /  
    { \con(D_x f_\omega^{mN}|W)} \right)^{\sigma}\right]  < e^{- mN \sigma \kappa_1 / 2}.
\end{equation}    
The base of the induction was established in Step 1. To carry out the inductive step suppose that the result holds for
$m$. Then using that for matrices $A_1, A_2\in GL_d(\R)$\;\; 
$\|A_1 A_2\|\leq \|A_1\| \|A_2\|,$ $\con(A_1 A_2)\geq \con(A_1) \con(A_2)$ 
and denoting by $\gamma_N(x, W)$ the left hand side of \eqref{Gap1Step}
we obtain
$$ \mathbb{E}_{(x,W)} \left[\left( \sup_{v \in W^{\perp}, \| v \| = 1} \| P_{W_{m+1}^{\perp}}(  D_x f^{(m+1)N}_\omega( v) ) \| /  
  \con(D_x f_\omega^{(m+1)N}|W) \right)^{\sigma}\right]\leq   $$
  $$ \mathbb{E}_{(x,W)} \left[\left( \frac{\DS \sup_{v \in W^{\perp}, \| v \| = 1} \| P_{W_{m}^{\perp}}(  D_x f^{mN}_\omega( v) ) \|}   
  {\con(D_x f_\omega^{mN}|W)} \right)^{\sigma}
  \left( \frac{\DS \sup_{v \in W_m^{\perp}, \| v \| = 1} \| P_{W_{m+1}^{\perp}}(  D_{X_m} f^{Nm, N}_\omega( v) ) \|}   
  {\con(D_{X_m} f_\omega^{Nm, N}|W_m)} \right)^{\sigma}
  \right] =  $$
 $$ \mathbb{E}_{(x,W)} \left[\left( \frac{\DS \sup_{v \in W^{\perp}, \| v \| = 1} \| P_{W_{m}^{\perp}}(  D_x f^{mN}_\omega( v) ) \|}   
  {\con(D_x f_\omega^{mN}|W)} \right)^{\!\!\!\sigma} \!\!\!
  \mathbb{E}_{(x, W)}\!\!\left[\left( \frac{\DS \sup_{v \in W_m^{\perp}, \| v \| = 1} \| P_{W_{m+1}^{\perp}}(  D_{X_m} f^{Nm, N}_\omega( v) ) \|}   
  {\con(D_{X_m} f_\omega^{Nm, N}|W_m)} \right)^{\!\!\sigma}\!\!\!\Big|\cF_{mN}\right]
  \right]   $$ 
$$ =\mathbb{E}_{(x, W)} \left[\left( \frac{\DS \sup_{v \in W^{\perp}, \| v \| = 1} \| P_{W_{m}^{\perp}}(  D_x f^{mN}_\omega( v) ) \|}   
  {\con(D_x f_\omega^{mN}|W)} \right)^{\sigma}  \gamma_N(X_m, W_m)\right]\leq
  $$
$$ e^{-N\sigma\kappa_1/2} \mathbb{E}_{(x, W)} \left[\left( \frac{\DS \sup_{v \in W^{\perp}, \| v \| = 1} \| P_{W_{m}^{\perp}}(  D_x f^{mN}_\omega( v) ) \|}   
  {\con(D_x f_\omega^{mN}|W)} \right)^{\sigma}  \right]\leq e^{-N(m+1)\sigma\kappa_1/2},
  $$  
where the last step uses the inductive assumption.  This completes the inductive step.\\

{\bf Step 3.} Let $\DS D_1 = \sup_{g \in \supp(\mu)} \max( \| Dg \|, \| Dg^{-1} \| ).$ Given $n$, let $m$ be a number such that\\
$mN\leq n<(m+1) N$. Then
	$$
	\int  \Big ( \sup_{v \in W^{\perp}, \| v \| = 1} \| P_{D_xg( W)^{\perp}}(  D_xg( v) ) \| /  {\con(D_xg|W)} \Big )^{\sigma} d\mu^{* n}(g) \leq
    $$$$
  D_1^{2N\sigma} \int  \Big ( \sup_{v \in W^{\perp}, \| v \| = 1} \| P_{D_xg( W)^{\perp}}(  D_xg( v) ) \| /  { \con(D_xg|W)} \Big )^{\sigma} d\mu^{* mN}(g) \leq $$$$ 
  D_1^{2N\sigma} e^{-m\kappa_1\sigma N/2}\leq D_1^{4N\sigma} e^{-n\kappa_1\sigma/2}
$$
where the second inequality relies on \eqref{ExpGap}. Setting $C_1=D_1^{4N\sigma}$ 
we obtain the result.
\end{proof}

    Recall that the topology on the Grassmannians is generated by the following systems of fundamental neighborhoods: 
$$\cU_\delta(W)=\{W'\mid W'=\graph(\cL) \text{ where } \cL\colon W\to W^\perp \text{ satisfies }  \|\cL\|\leq \delta\}.  $$

   The following result is proven in \S \ref{SSA2}, and gives a quantitative estimate on how likely it is for a most contracted direction to live in a ball of radius $\rho$.
    \begin{lemma}
    \label{sublem growth}
     There are constants 
        $   \kappa_2, C_2, c_3, c_4, \rho_0 >0$ such that
        for any $0 < \rho \le \rho_0$,  there exists $n_1\le C_2\abs{\log \rho}$, such that for any $x \in M$, any $W \in \Gr(T_x M, d- 1)$, there exists a set $\Omega_0=\Omega_0(W, \rho)\subset \Omega$ such that
        $\DS \PP(\Omega_0^c)\leq \rho^{\kappa_2}$
        and for each $\DS W'\in \cU_{c_3 \rho}(W)$
        for every unit vector $u  \in  W'$ and every $\omega\in\Omega_0$ we have for any $n > |\log \rho|$ divisible by $n_1$, 
     \begin{equation}
         \label{eq Dxfmngrowth}
        \| D_xf_{\omega}^n(u) \|  >    \con(D_x f_\omega^n) e^{c_4 n}.
        \end{equation} 
	\end{lemma}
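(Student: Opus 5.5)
The plan is to derive Lemma \ref{sublem growth} from the fractional moment bound of Lemma \ref{lem MomentBound} via Markov's inequality, and then use a perturbation argument in the Grassmannian to pass from $W$ to nearby $W'\in\cU_{c_3\rho}(W)$.

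\textbf{Step 1: a deviation bound for $W$.} For $g=f^n_\omega$, define
\[
r_n(\omega)=\sup_{v\in W^\perp,\|v\|=1}\|P_{D_xg(W)^\perp}D_xg(v)\|/\con(D_xg|W).
\]
By Lemma \ref{lem MomentBound} and Markov's inequality, $\PP(r_n>e^{-n\kappa_1/4})\le C_1e^{-n\sigma\kappa_1/4}$. I will take $n_1$ to be a fixed multiple of $|\log\rho|$ and sum this bound over $n>|\log\rho|$ divisible by $n_1$ (a geometric series). The resulting exceptional set then has measure at most $C\rho^{\sigma\kappa_1/4}$, and $\Omega_0$ is defined as its complement. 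On $\Omega_0$, the component of $D_xg$ that maps $W^\perp$ transversally to $D_xg(W)$ is exponentially small compared with the minimal expansion on $W$. By volume preservation, this component is $\|P_{D_xg(W)^\perp}D_xg u_W\|=|\det(D_xg|W)|^{-1}$ (see the identity following Definition \ref{defn:main_hypotheses}).

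\textbf{Step 2: growth of unit $u\in W'$ and comparison with the conorm.} Write $u=w+\cL w$ with $w\in W$, $\|\cL\|\le c_3\rho$, and $\|w\|\ge 1/2$. Then
\[
D_xg(u)=D_xg(w)+D_xg(\cL w).
\]
The first term has norm at least $\con(D_xg|W)/2$. The second term has a component in $D_xg(W)$ and a component orthogonal to it. The orthogonal part is bounded by $c_3\rho\, r_n\con(D_xg|W)$. The part in $D_xg(W)$ is bounded by $c_3\rho\|D_xg\|$, and could a priori be large.

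This is the main obstacle: the dangerous case is $D_xg(\cL w)$ cancelling $D_xg(w)$ inside $D_xg(W)$. To handle it, I will project onto $D_xg(W)^\perp$ instead. I will use the splitting $D_xg=\begin{pmatrix}A&B\\0&c\end{pmatrix}$ with respect to $W\oplus W^\perp\to D_xg(W)\oplus D_xg(W)^\perp$, where $|c|$ is the normal growth and $\con(A)=\con(D_xg|W)$. The vector $(w,\cL w)$ is mapped to $(Aw+B\cL w,\ c\,\cL w)$.

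\textbf{Step 3: a lower bound valid regardless of cancellation.} Bounding $\|B\|$ by $\|D_xg\|\le D_1^n$ is too crude. Instead, I plan to use the fact that the most contracted direction of $D_xg$ is, on $\Omega_0$, essentially transverse to $W$ by at least an angle of order $\rho$. I will derive this from Step 1 applied simultaneously at the intermediate times $kn_1$, a telescoping argument that is why I require divisibility by $n_1$.

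Concretely, I will show that on $\Omega_0$ the inequality $\con(D_xg|W')\ge \rho^{C}\con(D_xg|W)$ holds uniformly for $W'$ near $W$. I will then combine this with
\[
\con(D_xg)\lesssim |c|\le r_n\con(D_xg|W)\le e^{-n\kappa_1/4}\con(D_xg|W),
\]
which yields $\|D_xg(u)\|\ge\con(D_xg)e^{c_4n}$ once $n>|\log\rho|$ absorbs the polynomial factor $\rho^{C}$. The bound $\con(D_xg)\lesssim|c|$ holds because the normal component is the determinant ratio.

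If this route fails, I will instead choose $c_3\rho$ small enough relative to $e^{-n_1\kappa_1/4}$ that the perturbation $W'$ stays within the cone contracted by the first block of $n_1$ steps. I would then iterate blockwise using the Markov-chain structure \eqref{AppMC}.
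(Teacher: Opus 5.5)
\textbf{There is a genuine gap: Step 3 carries the whole lemma, and as you have set it up it does not go through.}

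The transversality you plan to ``use'' there is that the most contracted direction of $D_xg$ stays at angle $\gtrsim\rho$ from $W$. That is essentially the content of Lemma~\ref{lem:ahlfors_Ess}, which this lemma is meant to prove. So it has to be extracted from the event you impose, and your event is too weak. Your $\Omega_0$ only constrains the cumulative ratio $r_n$ at times $n>|\log\rho|$. Nothing is assumed about an initial stretch of length comparable to $|\log\rho|$ (the whole first block, if $n_1\ge|\log\rho|$).

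During such a stretch, the quantity governing the cancellation you correctly identified can become much larger than $\rho^{-1}$ (up to $D_1^{O(|\log\rho|)}$). That quantity is the vector $b=A^{-1}Be\in W$ with $D_xg(e-b)\in D_xg(W)^\perp$. Then $e-b$ spans a line in some $W'\in\cU_{c_3\rho}(W)$, and this line is sent into $D_xf^n_\omega(W)^\perp$, the conorm direction, at every later time. This happens even though $r_n\le e^{-n\kappa_1/4}$ for all $n>|\log\rho|$.

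A concrete example: take $d=2$ and $W=\R e_1$. Suppose the cumulative derivative at time $\approx|\log\rho|$ is $D=\mathrm{diag}(\rho^{3},\rho^{-3})\,\mathrm{Rot}(\rho^{2})$, which is possible once $D_1\ge e^{3}$. Then $r\approx\rho^{2}$, yet the line $D^{-1}(D(W)^\perp)$, at angle $\approx\rho^{2}$ from $W$, is contracted by $\approx\rho^{3}=\con(D)$. Subsequent steps that preserve and dominate along $W_k$ keep every $r_n$ small and keep \eqref{eq Dxfmngrowth} false. So the Step 1 event does not imply the conclusion.

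Your intermediate target is also off. Nothing produces a loss of the form $\rho^{C}$: either $\|b\|\,c_3\rho\le 1/2$ and the loss is a factor $2$, or the cancellation can be total. Moreover, absorbing $\rho^{C}$ into $e^{n\kappa_1/4}$ for all $n>|\log\rho|$ would require $C<\kappa_1/4$.

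\textbf{What is missing is domination at \emph{every} block endpoint $kn_1$, $k\ge 1$, with blocks short enough that the worst-case distortion inside one block is beaten by $\rho$.} The paper does this as follows.
\begin{enumerate}
\item It takes $n_1\le|\log\rho|/(3K)$ with $K=2(D_1+1)$, so that $e^{2D_1n_1}c_3\rho<1/2$.
\item It writes $D_{X_m}f^{mn_1,n_1}_\omega=\begin{bmatrix}A_m&C_m\\0&B_m\end{bmatrix}$. In this notation your $A$ is $A_{m-1}\cdots A_0$, your $B$ is the cumulative upper-right block, and your $c$ is $B_{m-1}\cdots B_0$.
\item It uses the multiplicative moment bound \eqref{eq supxEEVproduct} and Markov's inequality to get $\prod_{j<m}\|A_j^{-1}\|\|B_j\|\le 2^{-m}e^{-mn_1\kappa_1/4}$ for all $m\ge1$, off a set of measure $e^{-n_1\sigma\kappa_1/30}\le\rho^{\kappa_2}$.
\item It runs the graph transform $\cL_{m+1}=B_m\cL_m(A_m+C_m\cL_m)^{-1}$, proving $\|\cL_m\|\le e^{-mn_1\kappa_1/4}c_3\rho$ and $\|U_m\|\ge 2^{-m}\prod_{i<m}\|A_i^{-1}\|^{-1}$.
\item It finishes with $\con(D_xf^{mn_1}_\omega)\le\prod_{i<m}\|B_i\|$. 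This is your $\con(D_xg)\le|c|$; it holds because of the triangular form, $\|(D_xg)^{-1}\|\ge|c|^{-1}$, not because of the determinant identity.
\end{enumerate}

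Your telescoping idea can be completed in exactly this regime. Impose $r_{kn_1}\le e^{-kn_1\kappa_1/4}$ for every $k\ge1$. This costs $O(e^{-n_1\sigma\kappa_1/4})$, still a power of $\rho$ since $n_1\ge|\log\rho|/(6K)$. Then
\[
A^{-1}Be=\sum_{j<m}(A_j\cdots A_0)^{-1}C_jB_{j-1}\cdots B_0\,e,
\]
and the $j$-th term has norm at most $D_1^{2n_1}r_{jn_1}$ (with $r_0=1$). Hence $\|A^{-1}B\|\le 2D_1^{2n_1}\le 2\rho^{-1/3}$, which gives $\con(D_xg|W')\ge\frac13\con(D_xg|W)$, a constant loss.

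Your fallback has the wrong smallness condition for the same reason. $c_3\rho$ must be small compared with $D_1^{-2n_1}$, the inverse of the within-block distortion, not merely with $e^{-n_1\kappa_1/4}$.
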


\begin{proof}[Proof of Lemma \ref{lem:ahlfors_Ess}]
Let $x$ be a volume typical point, and let $0<\varepsilon< c_4/2$.
By the Multiplicative Ergodic Theorem, for a.e.\ $\omega$, we have that for large $n$
$$
\con(D_xf_\omega^n)\geq e^{(\lambda^{(1)}(x) -\eps)n} 
\text{ and }
\|D_x f_\omega^n|E^{ss}\|\leq e^{(\lambda^{(1)}(x) +\eps)n}, 
$$
where $\lambda^{(1)}(x)$ is the smallest Lyapunov exponent. Accordingly for large $n$ 
$$ \|D_x f_\omega^n(v)\|\leq \con(D_x f_\omega^n) e^{2\eps n} \| v \|, \quad \forall v\in E^{ss}(x, \omega) \setminus \{0\}.  $$
Now \eqref{eq Dxfmngrowth} shows that for $\omega\in \Omega_0(W, \rho)$, $W'\in \mc{U}_{c_3\rho}(W)$, $E^{ss}(\omega, x)\cap W'
=\{0\}$. The conclusion follows.
\end{proof}

\begin{remark}
 Note that Lemma \ref{lem:ahlfors_Ess} shows in particular that $V^{(1)}$ is one dimensional since otherwise it would intersect 
$W$.
\end{remark}

\subsection{Controlling small balls in Grassmannians.} 
\label{SSA2}
	\begin{proof}[Proof of Lemma \ref{sublem growth}]
    Recall the notation from \eqref{AppMC} and \eqref{OrbitNotation}.
	Denote
    $\DS
    	D_1 = \!\!\!\sup_{g \in \supp(\mu)} \!\!\!\max( \| Dg \|, \| Dg^{-1} \| ),
    $
    and   $K = 2 ( D_1 + 1 )$.
    	Take $n_1 $ so that 
	\begin{equation}
    \label{DefN1}
	   n_1 \leq	| \log \rho | / (3 K )  \leq 2 n_1. 
	\end{equation}

  We define $(X_m)_{m \geq 0}, (W_m)_{m \geq 0}$ as in \eqref{OrbitNotation} for $N = n_1$.
    Under splittings $T_{X_m} M = W_m \oplus W_m^{\perp}$ and $ T_{X_{m+1}} M = W_{m+1} \oplus W_{m+1}^{\perp}$, we may write 
	\aryst
	D_{X_m} f^{mn_1, n_1}_{\omega} = \begin{bmatrix}
		A_m & C_m \\ 0 & B_m
	\end{bmatrix}.
	\earyst
	By   Lemma \ref{lem MomentBound} we have  
	\begin{equation}
	\sup_{x \in M, W \in \Gr(T_x M, d-1)} \EV_{(x, W)}( 	(\| A_0^{-1} \| \| B_0 \|)^{\sigma}  ) \leq C_1 e^{ - n_1 \sigma \kappa_1 / 2 } < e^{ - n_1 \sigma \kappa_1 / 3 },
	\end{equation}
	provided that $n_1$ is large enough.
    
  Similarly to \eqref{ExpGap} we see that
	\ary \label{eq supxEEVproduct}
	\sup_{x \in M, W \in \Gr(T_x M, d-1)} \EV_{(x, W)}
    \left(  \prod_{j = 0}^{m-1} (\| A_j^{-1} \| \| B_j \|)^{\sigma}  \right) \leq  e^{ - m n_1 \sigma \kappa_1 / 3 }.
	\eary
	
	For each $m \geq 1$, we define the event 
	\aryst
	\Omega_m = \{ \omega \in \Omega   \mid     \prod_{j = 0}^{m-1} \| A_j^{-1} \| \| B_j \|    >  2^{-m}e^{- m n_1 \kappa_1 / 4 } \}.
	\earyst
	By Markov's inequality,  \eqref{eq supxEEVproduct} and by letting $n_1$ be sufficiently large, we deduce that 
	\[
	\PP(\Omega_m) \leq   2^{m \sigma} e^{- m n_1 \sigma \kappa_1 / 3 + m n_1 \sigma \kappa_1 / 4} \leq   e^{ - m n_1 \sigma \kappa_1 / 20}.
	\]
	Define 
	$\displaystyle
	\Omega_0 \coloneqq ( \bigcup_{j = 1}^{\infty} \Omega_j  )^{c}.
	$
	If $n_1$ is sufficiently large we get
	\begin{equation}
	\PP(\Omega_0^c) \leq \sum_{j = 1}^{\infty} \PP(\Omega_j) \leq  e^{ - n_1 \sigma \kappa_1 / 30}.
	\end{equation}

    Having now set up the set $\Omega_0$, we will check that \eqref{eq Dxfmngrowth} holds for $\omega\in \Omega_0$.
	  Let
    $\cL\colon W \to W^{\perp}$ be an arbitrary linear map with $\| \cL \| \leq c_3 \rho$ and $u \in W$ be a unit vector.
	Denote $\cL_0 = \cL$ and $U_0 = u$. For each integer $m \geq 0$, define
	\aryst
	\cL_{m+1} = B_m  \cL_m   (A_m + C_m \cL_m)^{-1}, \quad U_{m+1} = (A_m + C_m \cL_m) U_m.
	\earyst
	We see that for integer $m \geq 0$,  
	\aryst
	D_{X_m} f^{mn_1, n_1}_{\omega}( U_m + \cL_m(U_m) ) = U_{m+1} + \cL_{m+1}(U_{m+1}).
	\earyst 
	
	We have   
	\aryst
	\| A_j^{-1} \|, \| C_j \| \leq e^{D_1 n_1}, \quad \forall j \geq 0.
	\earyst

	  Note that for $c_3\le 1$ it follows  from \eqref{DefN1} that $\rho \leq e^{ - 3 K n_1} c_3^{-1}$. We will show by induction that for every $\omega \in \Omega_0$, for every integer $m \geq 0$, we have
	\ary \label{eq Lmbound}
	\|  \cL_m \| \leq c_3 \rho 2^m  \prod_{j = 0}^{m-1} \| A_j^{-1} \|\|B_j \|   \leq 
    e^{- m n_1 \kappa_1 / 4}  c_3 \rho.
	\eary
	The equality is true for $m = 0$.  Assume that it holds for some $m \geq 0$, then 
	\aryst
	\|  A_m^{-1}  C_m \cL_m \| < e^{2 D_1 n_1}  \cdot c_3 \rho \leq e^{2 D_1 n_1} \cdot e^{- 3 K  n_1}  <  1/2.
	\earyst
	Notice that  
	\aryst
	\| \cL_{m+1} \| &\leq&  \| B_m\mc{L}_m\|\|A_m^{-1}\| \| ( {\rm Id}  + A_m^{-1} C_m \cL_m )^{-1} \|
    \\&\leq &
    2  
    \| 
    A_m^{-1}\|\| B_m 
    \|   \| \cL_m \| \leq c_3 \rho 2^{m+1}  \prod_{j = 0}^{m} \| A_j^{-1} 
     \|\|B_j \|    \leq e^{ - (m+1) n_1 \kappa_1 /4} c_3 \rho. 
	\earyst 
	The last inequality above follows since $\omega \in \Omega_0 \subset \Omega_{m+1}^{c}$.
	This finishes the proof of \eqref{eq Lmbound} by induction.
 	By a similar induction, we see that 
	\ary \label{eq Umbound}
	\| U_{m} \| \geq 2^{-m} \prod_{i = 0}^{m - 1}\| A_{i}^{-1} \|^{-1}.
	\eary

	Now suppose $n >  | \log \rho |$.	
	Since $K > 1$, we have $n  > 3n_1$.   By letting $\rho \ll_{\mu} 1$, we have $n_1 \gg_{\mu} 1$ 
    and $1-\|\mc{L}_m\|>1/2$ for all $m$ by \eqref{eq Lmbound}.
 For every $\omega \in \Omega_0$,  it is now straightforward to deduce from \eqref{eq Umbound} and the definition of $\Omega_0$,  that for every unit $u \in W$,
  \[
  \|D_x f^{mn_1}_{\omega}( u + \cL(u) )\|\ge \|U_m\|-\|\mc{L}_m\|\|U_m\|\ge  e^{mn_1 \kappa_1/4} \prod_{i = 0}^{m-1} \|B_i\| \geq e^{mn_1 \kappa_1/4} \con(D_xf^{mn_1}_{\omega}).
 \]
 Thus \eqref{eq Dxfmngrowth} holds with $c_4=\kappa_1/4$.
	To finish the proof, it suffices to take
	\[
	{\kappa_2} = \sigma {  \kappa_1} /(200K),
	\]
	since by the lower bound $\abs{\log\ \rho}/6K\le n_1$ in \eqref{DefN1} we have
	$\DS
	\PP(\Omega_0^c) \leq   e^{ -  n_1 \sigma \kappa_1 / 30} \leq \rho^{\kappa_2}.
	$	
	\end{proof}

\subsection{Proof of Proposition \ref{prop:tail_on_splitting}}

Let $x \in M$ be a $\vol$-typical point, and let  $W \in \Gr(T_x M, d-1)$.
Let $\omega \in \Omega$ be such that $E^{ss}(\omega, x)$ is defined. 
Define for each $n \geq 0$, 
$$x_n = f^n_{\omega}(x),\quad V_n = D_xf^n_{\omega}(E^{ss}(\omega, x)), \quad W_n = D_xf^n_{\omega}(W), \quad  \theta_n=\angle(V_n, W_n). $$
\begin{lemma} \label{lem simplelinearalgebra}
	Assume that for all integers $j, k \geq 0$ 
     and some positive reals $\fk,\fe $ with $\fe<\fk/8$
    we have
	\ary 
	&&  \theta_k > e^{ - C} e^{- k \fe}, \label{eq condition1} \\
	&& \sup_{v  \in W_k^{\perp}, \| v \| = 1} \| P_{W_{k+j}^{\perp}}(  D_{x_k} f_{\omega}^{k, j}( v) ) \| <  e^{C }  e^{ - j \fk / 4 + k \fe}   \con(D_{x_k} f_{\omega}^{k, j} \vert W_k). \label{eq condition2}
	\eary 
	Then \eqref{eqn:subtempered_splitting_norm} and \eqref{eqn:subtempered_splitting_angle} hold with 
    $ \fC=2C, \eps=2\fe, \kappa=\frac{\fk}{8}$; and 	\eqref{eq itm3} holds 
    with  
    $ \fC'=2C, \eps'=2\fe, \kappa'=\frac{\fk}{16}$.
\end{lemma}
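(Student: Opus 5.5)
The plan is to reduce everything to one identity relating the growth of the line $V_k$ to the growth of the normal component of $W_k^\perp$. I will then feed in the two hypotheses, and get the strong stable contraction from volume preservation. The angle estimate \eqref{eqn:subtempered_splitting_angle} is immediate: \eqref{eq condition1} gives $\angle(W_k,V_k)>e^{-C}e^{-k\fe}\ge e^{-2C}e^{-2\fe k}$.

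\textbf{Step 1: the identity.} Fix $k,j$. Let $v\in V_k$ be a unit vector and let $u$ be a unit vector in $W_k^\perp$. Write $v=w+au$ with $w\in W_k$, so $|a|=\sin\theta_k\le 1$. Since $D_{x_k}f^{k,j}_\omega w\in W_{k+j}$, projecting onto $W_{k+j}^\perp$ kills it. Since $D_{x_k}f^{k,j}_\omega v\in V_{k+j}$, the projection has norm $\|D_{x_k}f^{k,j}_\omega v\|\sin\theta_{k+j}$. Hence
\[
\|D_{x_k}f^{k,j}_\omega v\|\,\sin\theta_{k+j}=|a|\,\|P_{W_{k+j}^\perp}D_{x_k}f^{k,j}_\omega u\|.
\]
I will treat angles via $\sin\theta\ge \tfrac{2}{\pi}\theta$, absorbing the constant into $e^{C}$, or take $\angle$ to mean the sine.

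\textbf{Step 2: domination \eqref{eqn:subtempered_splitting_norm}.} Bound the right-hand side of the identity using $|a|\le1$ and \eqref{eq condition2}, and bound $\sin\theta_{k+j}$ from below using \eqref{eq condition1} at time $k+j$. This gives
\[
\|D_{x_k}f^{k,j}_\omega|V_k\|\le e^{2C}e^{-j(\fk/4-\fe)}e^{2\fe k}\con(D_{x_k}f^{k,j}_\omega|W_k).
\]
Since $\fe<\fk/8$, we have $\fk/4-\fe>\fk/8$, which is \eqref{eqn:subtempered_splitting_norm} with $\fC=2C$, $\epsilon=2\fe$, $\kappa=\fk/8$.

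\textbf{Step 3: contraction \eqref{eq itm3}.} Here I would use that $\mu$ is supported on volume preserving maps, so $|\det D_{x_k}f^{k,j}_\omega|=1$. In the splittings $W_k\oplus W_k^\perp\to W_{k+j}\oplus W_{k+j}^\perp$ the matrix is block upper triangular, so
\[
1=|\det(D_{x_k}f^{k,j}_\omega|W_k)|\cdot\|P_{W_{k+j}^\perp}D_{x_k}f^{k,j}_\omega u\|\ge \con(D_{x_k}f^{k,j}_\omega|W_k)^{d-1}\,\|P_{W_{k+j}^\perp}D_{x_k}f^{k,j}_\omega u\|.
\]
Set $X=\|D_{x_k}f^{k,j}_\omega|V_k\|$. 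The identity gives the lower bound $\|P_{W_{k+j}^\perp}D_{x_k}f^{k,j}_\omega u\|\ge X\sin\theta_k\sin\theta_{k+j}$, so $\con(D_{x_k}f^{k,j}_\omega|W_k)^{d-1}\le (X\sin\theta_k\sin\theta_{k+j})^{-1}$. Combining this with Step 2, $X\le E\,\con$ where $E=e^{2C}e^{-j(\fk/4-\fe)}e^{2\fe k}$, yields
\[
X^d\le E^{d-1}\big(\sin\theta_k\sin\theta_{k+j}\big)^{-1}.
\]
Taking $d$-th roots and using \eqref{eq condition1} gives $X\le e^{O(C)}e^{-j\lambda}e^{O(\fe)k}$ with
\[
\lambda=\frac{(d-1)(\fk/4-\fe)-\fe}{d}.
\]

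\textbf{Main obstacle.} The hard part is the exponent bookkeeping in Step 3. I need $\lambda\ge\fk/16$ together with the constant $2C$ and the rate $2\fe$ in $k$. For $d\ge2$ we have $\lambda\ge \fk/8-\fe$, which is positive but not obviously $\ge\fk/16$ when $\fe$ is close to $\fk/8$. If the crude bound falls short, I would first sharpen Step 3. The idea is to avoid bounding the whole determinant by $\con^{d-1}$: split $W_k$ and use that only one direction can be nearly as contracted as $V_k$. Failing that, I would check whether the intended argument uses a stronger smallness of $\fe$ relative to $\fk$; this is harmless for the application in Proposition~\ref{prop:tail_on_splitting}, where $\fe$ can be taken arbitrarily small.
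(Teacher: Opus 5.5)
Your Steps 1--2 are the paper's proof: the identity \eqref{eq simpleineq}, then \eqref{eq condition2}, then \eqref{eq condition1} at time $k+j$. Step 3 uses the paper's two ingredients, namely the block-triangular determinant bound $\|P_{W_{k+j}^\perp}D_{x_k}f^{k,j}_\omega u_k\|\,\con(D_{x_k}f^{k,j}_\omega|W_k)^{d-1}\le 1$ and \eqref{eq simpleineq}. The paper first combines the determinant bound with \eqref{eq condition2} to get $\|P_{W_{k+j}^\perp}D_{x_k}f^{k,j}_\omega u_k\|\le (e^{C}e^{-j\fk/4+k\fe})^{(d-1)/d}$ and then divides by $\sin\theta_{k+j}$. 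You route the same inequalities through $X$, losing a harmless extra factor $\sin\theta_k$. Both give (at worst) the constant $2C$, the rate $2\fe$ in $k$, and the rate $\frac{(d-1)\fk}{4d}-\fe$ in $j$.

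The obstacle you flag is real, but it is a defect in the statement, not a missing idea on your part. The paper's proof has the same gap: it simply asserts that \eqref{eq itm3} follows at this point. For $d=2$ and $\fk/16<\fe<\fk/8$, the conclusion $\kappa'=\fk/16$ fails under the stated hypotheses. Take
\[
D_n=\begin{pmatrix} e^{\fk/8} & c_n\\ 0 & e^{-\fk/8}\end{pmatrix},\qquad c_n=e^{-\fk/8}e^{(n+1)\fe}-e^{\fk/8}e^{n\fe},
\]
with $W_n=\R e_1$ and $V_n=\R(e^{n\fe},1)$. Then:
\begin{itemize}
\item \eqref{eq condition2} holds for every $C>0$, since both sides carry the factor $e^{-j\fk/8}$;
\item \eqref{eq condition1} holds with $C=1$;
\item yet $\|D_{n-1}\cdots D_0|V_0\|\ge e^{-n(\fk/8-\fe)}/\sqrt2$, which eventually exceeds $e^{2C}e^{-n\fk/16}$.
\end{itemize}
The shear $c_n$ is unbounded, but neither the lemma nor either proof uses bounded derivatives.

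For $d=2$ the identity and $|\det(D|W_k)|=\con(D|W_k)$ are both equalities, so your first fallback (splitting $W_k$) cannot help. Your second fallback is the right fix: assume $\fe\le\fk/16$, or state $\kappa'=\frac{(d-1)\fk}{4d}-\fe$. This is harmless in Proposition \ref{prop:tail_on_splitting}, where $\fe=\epsilon/2$ and all three splitting conditions only weaken as $\epsilon$ increases.
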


\begin{proof}
	Since \eqref{eqn:subtempered_splitting_angle}  and \eqref{eq condition1} are the same, we just need to verify \eqref{eqn:subtempered_splitting_norm} and \eqref{eq itm3}. 
    
    To check \eqref{eqn:subtempered_splitting_norm},   let $u_k$ be a unit vector orthogonal to $W_k$ and $v_k$ be a unit vector in $V_k$.
Splitting $v_k=w_k \cos \theta_k+u_k \sin \theta_k$ with $w_k\in W_k$ we obtain \\
\[D_{x_k} f_{\omega}^{k,j} v_k=\sin\theta_k D_{x_k} f_{\omega}^{k,j} u_k+w_{k,j}\] where $w_{k,j}\in W_{k+j}.$
Hence projecting the previous line onto $W_{j+k}^{\perp}$
\begin{equation}
\label{eq simpleineq}
\sin\theta_{k+j}\DS \left\|D_{x_k} f_{\omega}^{k, j}(v_k)\right\|=\sin\theta_k \left\|P_{W_{k+j}^{\perp}} D_{x_k} f_{\omega}^{k,j} u_k\right\|.
\end{equation}
Therefore by \eqref{eq condition2}, we obtain
\[
\left\|D_{x_k} f_{\omega}^{k, j}(v_k)\right\|\le \frac{\sin \theta_k}{\sin \theta_{k+j}} 
e^Ce^{-j\fk/4+k\fe}\con(D_{x_k} f^{k,j}_{\omega}\vert W_k).
\]
Thus if $\fe<\fk/8$, then \eqref{eqn:subtempered_splitting_norm} follows from \eqref{eq condition1} and \eqref{eq condition2} with the given constants.

    We now prove  \eqref{eq itm3}. Since $\det(D_x f_{\omega}^{k, j})\!=\! 1$, we get
	\begin{equation}
 \DS \| P_{W_{k+j}^{\perp}}(  D_{x_k} f_{\omega}^{k, j}{ u_k} ) \|   \Big (  \con\left(D_{x_k} f_{\omega}^{k, j}  \vert W_k \right) \Big )^{d-1} \!\!\leq \!1.
	\end{equation}
	By \eqref{eq condition2},
$\DS 
\| P_{W_{k+j}^{\perp}}(  D_{x_k} f_{\omega}^{k, j}{ u_k} ) \|    \leq (e^{C }  e^{ - j \fk / 4 + k \fe})^{\frac{d-1}d}.$
	Now \eqref{eq itm3} follows from \eqref{eq simpleineq}.
\end{proof}

\begin{proof}[  Proof of Proposition \ref{prop:tail_on_splitting}]
	By Lemma \ref{lem simplelinearalgebra}, we have
	\aryst
	\PP(\mc{A}^c) \leq \sum_{  k \geq 0} \PP(\mbox{\eqref{eq condition1} fails for $k$}) + \sum_{j, k \geq 0} \PP(\mbox{\eqref{eq condition2} fails for $(j, k)$}), 
	\earyst
   where in \eqref{eq condition1} and \eqref{eq condition2} we have  $\fk=16\kappa$,
 $\fe=\epsilon/2$, $C=\fC/2$.

    Let $\alpha, C' $ be given by Lemma \ref{lem:ahlfors_Ess}. 
	By Lemma \ref{lem:ahlfors_Ess}, we see that for some $c' > 0$
	\aryst
	\sum_{  k \geq 0} \PP(\mbox{\eqref{eq condition1} fails for $k$})  \leq C'	e^{ - \fC \alpha/2} \sum_{k \geq 0} e^{ - k \epsilon \alpha/2} \leq 1/(2c') e^{- c'  \fC }.
	\earyst
By Lemma \ref{lem MomentBound}, we see that for some $c'' > 0$
	\[
	\sum_{j, k \geq 0} \PP(\mbox{\eqref{eq condition2} fails for $(j, k)$})   \leq  	 e^{- C \sigma} \sum_{j, k \geq 0}  e^{ -  \sigma k \fe - j \sigma (\kappa_1/2 -\fk/4)} \leq 1 /(2c') e^{ -  c'' \fC },
	\]
  provided that $\kappa$ (and, hence $\fk$) are small enough.
    The   result follows from the two inequalities above.
\end{proof}

\printbibliography

\end{document}